\documentclass[11pt]{amsart}
\usepackage{geometry}                
\geometry{a4paper}                   
\usepackage[parfill]{parskip}    
\usepackage{graphicx}
\usepackage{amssymb}
\usepackage{epstopdf}
\usepackage{hyperref}
\usepackage[noabbrev]{cleveref}
\usepackage{amsfonts,amsmath,amssymb,amsthm}
\usepackage{bm}
\DeclareGraphicsRule{.tif}{png}{.png}{`convert #1 `dirname #1`/`basename #1 .tif`.png}

\usepackage{todonotes}

\definecolor{skyblue}{rgb}{0.85,0.85,1}

\theoremstyle{plain}
          \newtheorem{theorem}{Theorem}[section]
          \newtheorem{proposition}[theorem]{Proposition}
          \newtheorem{lemma}[theorem]{Lemma}

\theoremstyle{definition}
          \newtheorem{definition}[theorem]{Definition}
	  \newtheorem{remark}[theorem]{Remark}

\newcommand{\bp}{\bm{p}}
\newcommand{\bq}{\bm{q}}
\newcommand{\ba}{\bm{a}}

\newcommand{\q}{\theta}
\newcommand{\ve}{\varepsilon}

\newcommand{\cS}{\mathcal{S}}

\newcommand{\C}{\mathbb{C}}
\newcommand{\R}{\mathbb{R}}
\newcommand{\T}{\mathbb{T}}
\newcommand{\Z}{\mathbb{Z}}

\begin{document}

\title[Euler, Hill, Evans]{An Evans function for the linearised 2D Euler equations using  Hill's determinant}

\author{Holger~R.~Dullin}
\address{School of Mathematics and Statistics, The University of Sydney, Australia}
\email{holger.dullin@sydney.edu.au}

\author{Robert Marangell}
\address{School of Mathematics and Statistics, The University of Sydney, Australia}
\email{robert.marangell@sydney.edu.au}

\begin{abstract}
We study the point spectrum of the linearisation  of  Euler's equation for the ideal fluid on the torus
about a shear flow.
By separation of variables the problem is reduced to the spectral theory of a complex Hill's equation.
Using  Hill's determinant an Evans function of the original Euler equation is constructed.
The Evans function allows us to completely characterise the point spectrum of the linearisation,
and to count the isolated eigenvalues with non-zero real part. We prove that the number of discrete eigenvalues of he linearised operator for a specific shear flow is exactly twice the number of non-zero integer lattice points inside the so-called unstable disk. 
\end{abstract}

\maketitle

\section{Introduction}

Euler's fluid equations describe the flow of an incompressible inviscid Newtonian fluid.
It is a non-linear partial differential equation that is a limit of the incompressible Navier-Stokes equations for vanishing viscosity. In the stream function formulation the equations read 
\begin{equation}\label{eq:euler}
\nabla^2 \Psi_t = \Psi_x(\nabla^2 \Psi)_y - \Psi_y(\nabla^2 \Psi)_x\,,
\end{equation}
where $\Psi(x,y,t)$ is the stream function of the velocity field, that is $u = \Psi_y $ and $v = -\Psi_x $ are the $x$ and $y$ components of the velocity of an incompressible ideal fluid flow, and subscripts denote partial derivatives. 

The simplest solutions of \cref{eq:euler} are steady solutions for which the velocity field is constant in time.
In a shear flow adjacent layers of fluid move parallel to each other, with possibly different speeds. 
Given such an equilibrium solution its stability is studied by linearising the Euler equations about this particular solution.
When considering the equation with periodic boundary conditions in both directions it is natural to consider
the linearised equations in Fourier space. The linear operator typically has continuous spectrum on the imaginary axis,
and may have discrete spectrum outside the imaginary axis (see for example \cite{Li00,LS03,LLS04} and the references therein). Due to the Hamiltonian symmetry, any spectrum with non-zero real part leads to linear instability through exponential growth.

We study the linearisation of \cref{eq:euler} about the steady state with stream function $\psi = \cos( p_1 x + p_2 y)$ for 
co-prime integers $\bp = (p_1, p_2)$. 
For the linearised Euler equations in Fourier space, an integer lattice point $\ba=(a_1,a_2) \in \Z^2$ represent the spatial Fourier mode
$\exp(i( a_1 x + a_2 y))$.
The linear operator obtained from linearising \cref{eq:euler} about $\psi$ block-diagonalises into so-called classes.
A class is an infinite collection of modes whose integer lattice points $\ba$ are located on a line with direction vector~$\bp$ \cite{Li00,LLS04,DMW16}.
Different classes correspond to lines with different (signed) distance to the origin.

A theorem of Li \cite{Li00} shows that classes which do not have an integer lattice point inside the `unstable disk' of radius $\sqrt{p_1^2+p_2^2}$ can not contribute to instability. A theorem of Latushkin, Li and Stanislavova \cite{LLS04} shows that twice the number of non-zero integer lattice points inside the unstable disk is an upper bound for  the number of discrete eigenvalues with non-zero real part.
In this paper we show that this upper bound is sharp, so that the number of discrete eigenvalues is exactly equal to twice the number of  non-zero integer lattice points inside the unstable disk. 

This result has been difficult to achieve because the discrete spectrum typically contains complex eigenvalues.
It has been shown by Worthington, Dullin and Marangell \cite{DMW16}, and Latushkin and Vasuvedan \cite{LV19} that classes which contain a single integer lattice point inside the unstable disk correspond to a pair of real eigenvalues. 
In this manuscript we provide another proof of this fact, and we show that classes containing a pair of integer lattice points inside the unstable disk correspond to either two pairs of real eigenvalues  or a complex quartet. 

These new results about the (in)stability of the Euler equations are obtained through an approach different from \cite{Li00,DMW16,LV19}. 
Instead of first going into Fourier space, then linearising the equation, and then block-diagonalising the operator, 
we first linearise the PDE, then separate variables, and then consider the resulting boundary value problem in (a different) Fourier space. 
The separation step is somewhat unusual because it uses a non-orthogonal coordinate system in order to preserve 
the periodicity. The resulting boundary value problem/dispersion relation is a Hill's equation with quasi-periodic boundary conditions.
The spectral theory of this equation is well developed and is based on Hill's determinant, which is a Fredholm determinant. 
The Hill's determinant is used to write down an Evans function for a class. In the final step all relevant Evans functions coming
from Hill's equation are combined into a single Evans function of the linearised 2D Euler equations. 
With the help of this Evans function we are able to prove that the upper bound from \cite{LLS04}  is actually always attained. 
The main ingredient in the proof of our result is the ability to differentiate the Hill's determinant and hence the Evans function at vanishing spectral parameter. This allows us to treat questions about the number of unstable eigenvalues as a kind of bifurcation problem. 

{\color{black} Our periodic Evans function for a class is a particular example of a periodic Evans function associated with Bloch-wave eigenvalue problems of PDEs in 1+1 dimensions. There are many other specific examples of such Evans functions constructed in the last 15 years or so, for various PDEs of note, including the Kuramoto-Sivashinsky equation \cite{BNRZ12}, the Kawahara equation \cite{TDK2018}, the sine and Klein-Gordon equations \cite{CM20,JMMP13,JMMP14}, as well as linear and nonlinear Schr\"odinger equations \cite{BR05,CM20b,GH07,JLM13,LBJM19} to name a few. For a description of periodic Evans functions along the lines of \cite{Gar93}, as well as for additional applications of the Evans function to periodic problems, see \cite{KP13} and the references therein. 

Our Evans function differs from the aforementioned references in that, rather than linearising around a periodic solution to a 1+1 dimensional travelling wave and considering arbitrary longitudinal perturbations, our eigenvalue problem, see Proposition~\ref{prop:Hill}, comes from the reduction of a 2+1 dimensional dispersion relation that is considering  periodic 
perturbations in both directions. After separation of variables the temporal spectral parameter $\lambda$ (or $c$) is now a parameter within the potential of the Hill's equation, and what would typically be a range of Floquet exponents is reduced to a single Bloch mode, as it would be in the 1+1 dimensional case of periodic perturbations, but now the `eigenvalue' parameter (what we call $\mu = d^2$), is a function of the Fourier mode of the periodic perturbation in the transverse direction. The upshot is that we can construct an Evans function for a 2+1 dimensional problem by multiplying together the Evans functions for the relevant classes, with Li's theorem \cite{Li00} ensuring that the relevant product is finite. The lack of continuity in the Floquet exponents considered means that we cannot exploit the analyticity of the periodic Evans function near the origin for a stability index, as in \cite{BJK2011,BJK2014,BR05,JMMP13,JMMP14}. Instead we employ the Floquet-Fourier-Hill method of \cite{DK06} (and \cite{MW66}) in order to relate the Hill discriminant to the parameters in our original equation. 

Evans functions for systems with more than one independent spatial variable are not common. To the best of our knowledge they have only been determined numerically twice: first, to examine the stability (or lack thereof) in large amplitude planar Navier-Stokes shocks \cite{HLZ17} and then in the case of planar viscous strong detonation waves \cite{BHLL18}. 
The first proofs of instability for the equilibria considered here were given in \cite{DMW16} and \cite{LV19,DLMVW19}, but the Evans function was not used in this approach. Here we use an Evans function for this $2+1$ dimensional problem to give a precise count of all unstable eigenvalues, including complex eigenvalues. 
Extensions to non-planar domains \cite{DW16} and to $3+1$ dimensions \cite{DW19b,DMW19} have not been considered in the framework of Evans functions, and we hope to return to these problems in the future.
}

The structure of the paper is as follows.
In \cref{sec:separate} we linearise the Euler equations about a shear flow solution, separate variables and derive a related Hill's equation for the spectral problem. 
In \cref{sec:hill,sec:det} we consider the spectrum of the Hill's equation via the Hill determinant and derive the Evans function for a single class of the linearised Euler equations on the torus. 
In \cref{sec:evans} we prove that we indeed have an Evans function for this class. 
In \cref{sec:euler} we extend the Evans function for a class to an Evans function for the linearised Euler equations. 

\section{Separation of the linearised Euler equations}\label{sec:separate}

Consider \cref{eq:euler} on the torus $\T^2 = [-\pi,\pi] \times [-\pi,\pi]$.
A simple steady shear flow in the $x$-direction is given by $\Psi(x, y) = \cos y$, such that $u = -\sin y$ and $v = 0$.
More generally,  a doubly periodic steady state solution to \cref{eq:euler} is given by $\Psi(x,y) = \psi(\eta)$, where $\psi(\eta) = \psi( \eta + 2\pi)$ is a real periodic function 
of the single variable $\eta = p_1 x + p_2 y$  with $p_1, p_2 \in \Z$  relatively prime integers. 
Such a solution $\psi(\eta)$ is steady state solution to \cref{eq:euler}. 

This particular solution is best described in a new coordinate system $(\xi, \eta)$ with $\eta$ as defined before, 
and $\xi = q_1 x + q_2 y$ with relatively prime integers $q_1, q_2 \in \Z$, i.e.\ $\gcd(q_1, q_2) = 1$.
When $\gcd(p_1, p_2) = 1$ then there exist relatively prime integers $q_1, q_2 \in \Z$ with $p_2 q_1 - p_1 q_2 = 1$ (see, e.g., \cite[Thm25]{HardyWright79}), such that the vectors
$\bq = (q_1, q_2)$ and $\bp = (p_1, p_2)$ are a basis of the lattice $\Z^2$. The vector $\bq$ is unique up to adding multiples of $\bp$.
We choose the unique $\bq$ that makes the angle between $\bp$ and $\bq$ as large as possible, equivalently $|\bp \cdot \bq|$ as small as possible.
For given fixed $\bp$  this basis of $\Z^2$  is as close as possible to an orthogonal basis. 
The linear transformation from $(x,y)$ to $(\xi, \eta)$ preserves periodicity and volume.
The Laplacian in the $(\xi, \eta)$ coordinates reads
\[
    \nabla^2 = q^2 \partial_\xi^2 + 2 \bp \cdot \bq \, \partial_\xi \partial_\eta + p^2 \partial_\eta^2 \,,
\]
where $p^2 := p_1^2 + p_2^2$ and $q^2:= q_1^2 + q_2^2$. Note that the choice of $\bq$ implies that $|\bp \cdot \bq| / p^2 \le 1/2$.
The mixed term in the Laplacian appears because the chosen coordinate system is not in general orthogonal. 
For our purposes it is crucial to preserve periodicity and volume, so that instead of orthogonality we just
require that the  determinant of the linear map from $(x,y)$ to $(\xi, \eta)$ has determinant~1.

Now consider the linearisation of \cref{eq:euler} about $\psi(p_1 x + p_2 y)$ written in the new coordinate system.
The perturbed solution is $\psi(\eta) + \ve\phi(\xi,\eta,t)$ and considering only the first order terms in $\ve$ the linear governing equation for the perturbation $\phi$ is
\begin{equation}\label{eq:lin}
    \nabla^2 \phi_t = p^2 \psi''' \phi_\xi  - \psi' \nabla^2 \phi_\xi 
\end{equation}
where $\frac{d}{d\eta}$ is denoted by $'$. 
Two additional terms of the linearised equation vanish because $\psi_\xi = 0$. 
This is a linear PDE for $\phi(\xi, \eta, t)$ with coefficients that depend periodically on $\eta$ and periodic boundary conditions $\phi( \xi + 2\pi, \eta ) = \phi(\xi, \eta+ 2 \pi) = \phi(\xi, \eta)$.

Consider a single mode $\phi = f(\eta) \exp( i k \xi + \lambda t))$ with wave number $k \in \Z$ and temporal eigenvalue $\lambda \in \C$ where
 $f(\eta)$ is a $2\pi$-periodic function. Then $\nabla^2 \phi = ( -k^2 q^2 f + 2 i k \bp \cdot \bq f' + p^2 f'') \phi / f$
 and
 \cref{eq:lin} separates into
\begin{equation}\label{eq:sep}
    f'' + \frac{2 i k  \bp \cdot \bq}{p^2} f' + \left( \frac{k\psi'''(\eta)}{i\lambda - k\psi'(\eta)} - \frac{k^2q^2}{p^2}\right)f = 0 \,.
\end{equation}
For $\bp \cdot \bq = 0$ and $p^2 = q^2 = 1$ this reduces to the usual equation for linearisation about a 2D shear flow, 
see, e.g., \cite{Acheson90}, except that there the velocity $U = \psi_\eta$ is used to write the coefficient.
The term proportional to $f'$ in this second order linear equation can be transformed away by redefining the independent variable $f$,
at the expense of introducing quasiperiodic boundary conditions.
Defining $\q = k  \bp \cdot \bq/p^2 \bmod 1$ and $f(\eta) = e^{-i\theta \eta} g(\eta)$ we arrive at
\begin{proposition} \label{prop:Hill}
The linearisation of Euler's equation for the ideal incompressible fluid flow \eqref{eq:euler} on the torus $(x,y) \in \T^2$ around the steady state with periodic stream function $\psi(\eta)$ where $\eta = p_1 x + p_2 y$ leads to the Hill's equation
\begin{equation}\label{eq:hillgen}
    g'' + Q(\eta) g = \mu g, \quad Q(\eta) :=  \frac{\psi'''}{c - \psi'} 
\end{equation}
where 
$\mu := k^2/p^4$ and $c := i \lambda/k$, with quasi-periodic boundary conditions 
\begin{equation}\label{eq:bound}
\begin{split}
g(2\pi) & = \exp( 2\pi i \theta) g(0),  \\ 
g'(2\pi) & = \exp( 2\pi i \theta) g'(0) \,.
\end{split}
\end{equation}
\end{proposition}

We have thus rewritten the dispersion relation for the linearised equation \eqref{eq:lin} on the torus as the quasi-periodic boundary value problem \cref{eq:hillgen} with boundary conditions \eqref{eq:bound}. 
There are temporal eigenvalue $\lambda = -i c k$ of the linearised Euler equation (about the shear flow $\psi(\eta)$)
when there exists a quasi-periodic solution to \cref{eq:hillgen} satisfying the boundary condition in \cref{eq:bound}. 

The spatial mode $\exp( i ( l\eta + k \xi))$ written in the original coordinates is $\exp( i ( l p_1 + k q_1) x + ( l p_2 + k q_2) y) = \exp( i ( a_1 x + a_2 y))$, 
such that $\ba = k \bq + l \bp \in \Z^2$ is the Fourier mode in the original coordinates.
It is convenient to introduce yet another coordinates system in dual Fourier-mode space with orthogonal basis $(\bp_\perp, \bp)$. 
The vector $\bp_\perp = -J \bp$ where $J$ is a rotation by $\pi/2$.
The vector $\ba$ in this coordinate system has coordinates denoted by $(d, \q)$, such that
$\ba = k \bq + l\bp =  d \bp_\perp + \q \bp$ implies
\begin{equation} \label{eq:dtheta}
        d = \frac{ k}{p^2}, \quad \q = k \frac{ \bp \cdot \bq}{p^2 }  \,.
\end{equation}
This is obtained by taking scalar products with $\bp$ and $\bp_\perp$ and using $\bq \cdot \bp_\perp = 1$.
Notice that in the boundary condition \eqref{eq:bound} any integer part in $\theta$ is irrelevant, and 
this is the reason that $\q$ can be considered as an angle $\mod 1$.
In this coordinate system the linearisation of  \cref{eq:euler} about $\psi(p_1 x + p_2 y)$ directly leads to 
\cref{eq:hillgen}.
The quantity $\q$ measures the deviation of the basis $(\bq, \bp)$ from orthogonality.
Conversely, when using the orthogonal basis $(\bp_\perp, \bp)$ the quantity $\q$ determines the 
quasi-periodic boundary conditions.

For later reference we translate Li's unstable disk theorem \cite{Li00} into our setting.
Li studied the case $\psi(\eta) = \cos(\eta)$ and decomposed the Fourier modes $\ba = k \bq + l \bp$ into subsets called ``classes'' 
with fixed integer $k$ and varying integer $l$. The unstable disk theorem states that the
class with given $k$ has no discrete spectrum when all modes $\ba$ in this class are 
outside the closed disk centred at the origin with radius $p = |\bp| = \sqrt{ p_1^2 + p_2^2}$.
The mode in class $k$ that is closest to the origin has the smallest projection onto the $\bp$-direction. 
Hence $k \bq \cdot \bp + l \bp \cdot \bp$ is minimal and dividing by $p^2 = \bp \cdot \bp$ we see that 
this occurs when $\q - l$ is minimal. 
Thus we  choose the angle $\q$ to be such that $ \q \in (-1/2, 1/2]$.
This determines a particular representative of the class with given $k$ for which $l$ is uniquely determined.
Thus in our notation Li's unstable disk theorem \cite{Li00} reads
\begin{theorem} \label{thm:disk}
Consider the case where $\psi(\eta) = \cos(\eta)$. If $\q^2  + d^2 > 1$  then there is no discrete eigenvalue $c$ of the linearised Euler's equations
and the solution is stable.
Equivalently, if $\mu = d^2  > 1 - \theta^2$ then there is no $\q$-quasiperiodic spectrum  of Hill's equation \eqref{eq:hillgen} for any $c$. 
\end{theorem}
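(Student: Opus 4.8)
The plan is to prove the contrapositive by a single energy (quadratic-form) estimate. Suppose, for some admissible $c$, that $g\not\equiv 0$ is a $\theta$-quasiperiodic solution of \cref{eq:hillgen}; here ``admissible'' means the potential $Q=\psi'''/(c-\psi')$ is regular, which (since the range of $\psi'=-\sin\eta$ is $[-1,1]$) happens exactly when $c\notin\R$ or when $c\in\R$ with $|c|>1$. I will show this forces $\mu\le 1-\theta^2$. First I would multiply \cref{eq:hillgen} by $\bar g$ and integrate over one period. Integrating $g''\bar g$ by parts produces the boundary term $[g'\bar g]_0^{2\pi}$, which vanishes because \cref{eq:bound} gives $g'(2\pi)\bar g(2\pi)=e^{2\pi i\theta}g'(0)\,\overline{e^{2\pi i\theta}g(0)}=g'(0)\bar g(0)$. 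This yields the identity
\begin{equation*}
-\int_0^{2\pi}|g'|^2\,d\eta+\int_0^{2\pi}\frac{\psi'''}{c-\psi'}\,|g|^2\,d\eta=\mu\int_0^{2\pi}|g|^2\,d\eta.
\end{equation*}

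The second step exploits the special structure of the cosine profile. Since $\psi=\cos\eta$ one has $\psi'''=-\psi'$, so writing $U:=\psi'$ the potential is $Q=U/(U-c)=1+c/(U-c)$. Substituting and rearranging gives
\begin{equation*}
\int_0^{2\pi}|g'|^2\,d\eta+(\mu-1)\int_0^{2\pi}|g|^2\,d\eta=c\int_0^{2\pi}\frac{|g|^2}{U-c}\,d\eta.
\end{equation*}
Now I split into real and imaginary parts. With $c=c_r+ic_i$, $R:=\int_0^{2\pi}|g|^2/|U-c|^2\,d\eta\ge0$ and $P:=\int_0^{2\pi}(U-c_r)|g|^2/|U-c|^2\,d\eta$, the right-hand side equals $(c_rP-c_i^2R)+ic_i(P+c_rR)$. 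When $c_i\ne0$ the left-hand side is real, so the vanishing of the imaginary part forces $P=-c_rR$; feeding this back into the real part collapses it to $-|c|^2R\le0$. When $c\in\R$ with $|c|>1$ the factor $c/(U-c)$ has constant negative sign, so the right-hand side is $\le0$ directly. In either admissible case we obtain $\int_0^{2\pi}|g'|^2\,d\eta\le(1-\mu)\int_0^{2\pi}|g|^2\,d\eta$.

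The final ingredient is a spectral-gap (Poincar\'e) inequality for $\theta$-quasiperiodic functions. Writing $g=e^{i\theta\eta}f$ with $f$ being $2\pi$-periodic and expanding $f$ in a Fourier series shows $\int_0^{2\pi}|g'|^2\,d\eta\big/\int_0^{2\pi}|g|^2\,d\eta\ge\min_{n\in\Z}(n+\theta)^2=\theta^2$, the minimum being attained at $n=0$ precisely because $\theta\in(-1/2,1/2]$. Combining with the previous inequality yields $\theta^2\le1-\mu$, i.e. $\mu=d^2\le1-\theta^2$, contradicting the hypothesis $\mu>1-\theta^2$. Hence no nontrivial $\theta$-quasiperiodic solution exists for any admissible $c$; translating back via $c=i\lambda/k$ and $\mu=d^2$ gives the stability statement for the Euler linearisation.

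I expect the main obstacle to be the sign control of the right-hand side. The decisive manoeuvre is using the imaginary-part identity $P=-c_rR$ to convert the a priori sign-indefinite real part $c_rP-c_i^2R$ into the manifestly non-positive $-|c|^2R$; this is exactly where the Hamiltonian/Rayleigh structure is exploited, and it is the step most likely to need care. I would also have to argue that the singular case (real $c$ with $|c|\le1$, where $U-c$ vanishes and $Q$ fails to be integrable) belongs to the continuous rather than the discrete spectrum and so lies outside the scope of the statement, ensuring the energy identity is only applied where its integrals converge. Finally, I would note that the clean radius-$1$ threshold is a feature of the identity $\psi'''=-\psi'$ special to $\psi=\cos\eta$; a general profile would replace the constant $1$ by a bound involving $\|\psi'''/\psi'\|$-type quantities.
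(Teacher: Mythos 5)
Your proof is correct, but it is genuinely different from what the paper does, because the paper does not prove \cref{thm:disk} at all: it quotes it as Li's unstable disk theorem \cite{Li00}, and Li's original argument is carried out in Fourier space, class by class, on the associated Jacobi difference operators (the structure recalled in the paper's appendix). You instead work directly on the separated Hill problem \cref{eq:hillgen} with boundary conditions \cref{eq:bound}. Your starting identity is exactly the germ of the paper's Rayleigh computation \cref{eq:rayleigh}, which the paper records only as a qualitative sign criterion; you push it to a quantitative bound by exploiting the cosine-specific identity $\psi'''=-\psi'$ (so $Q=1+c/(U-c)$ with $U=\psi'$), eliminating the sign-indefinite term via the imaginary-part relation $P=-c_rR$ so that the right-hand side collapses to $-|c|^2R\le 0$, and then invoking the quasi-periodic spectral gap $\int_0^{2\pi}|g'|^2\,d\eta\ge \q^2\int_0^{2\pi}|g|^2\,d\eta$, which is legitimate precisely because the paper normalises $\q\in(-1/2,1/2]$. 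What your route buys is an elementary, self-contained derivation of the unit disk in the $(\q,d)$-plane entirely within the paper's Hill-equation framework, with the pleasant consistency check that your inequality $\mu\le 1-\q^2$ is saturated exactly on the circles of \cref{lem:circle}, where the eigenvalue $c=0$ lives; what Li's route buys is uniformity over the class decomposition without separating variables. Two small repairs are worth making. First, your claim that regularity of $Q$ holds ``exactly when'' $c\notin\R$ or $|c|>1$ overlooks $c=0$, where $Q\equiv 1$ is regular and eigenvalues genuinely exist; this is not a gap in substance, since at $c=0$ the right-hand side $c\int|g|^2/(U-c)\,d\eta$ vanishes identically and your estimate goes through, with equality attained on those circles. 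Second, your deferral of singular real $c\in[-1,1]\setminus\{0\}$ to the continuous spectrum is the same move the paper itself makes in the proof of \cref{lem:c0} (the unbounded potential admits no continuous eigenfunctions there), so your treatment of that case is consistent with how the statement is intended.
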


The spectral parameter $\mu$ of Hill's equation \eqref{eq:hillgen} satisfies $\mu = d^2$. 
Coming from the Euler equation the parameters $\q$ and $\mu = d^2$ are discrete for fixed given $\bp$, 
labelled by the wave number $k$ in the $\xi$ direction in the mode ansatz for $\psi$.
Whenever convenient in the following we consider $(d, \q)$ as continuous parameters. 

In the linearised Euler equation the spectral parameter that determines instability is $\lambda$ respectively $c = i \lambda/k$
with wave number $k$, while the parameters of the equation are $\q  \mod 1$ and $d$ determined by $\bp$ and $k$.
In the Hill's equation \eqref{eq:hillgen} the spectral parameter is $\mu = d^2$, 
while the parameter of the equation is $c$, and the boundary conditions are determined by $\q$.
Ultimately what we are interested in is the relation between $c$, $d$, and $\q$, 
and this relation is determined by the spectral theory of the complex Hill's \cref{eq:hillgen}.

By multiplying \cref{eq:hillgen} by $\bar{g}$ and integrating, we recover Rayleigh's criterion (integrating by parts once): 
\begin{equation} \label{eq:rayleigh}
0 = \Re(\lambda)\int |g|^2 \frac{\psi'''}{|c-\psi'|^2} d\eta \,.
\end{equation}
Thus for instability where $\Im(c) \not = 0$ it is necessary that $\psi'''$ is positive somewhere and negative somewhere else, 
hence somewhere $\psi''' = 0$. Usually this is written in terms of velocity $U$,
decreasing the number of derivatives by one, and hence the name inflection point criterion.
In the periodic setting this criterion is not effective, since the derivative of a periodic function has zero average, and hence $U''$  
always changes sign.
This paper is not meant as an elaboration on this criterion, but instead we give a complete description of the point spectrum 
of the linearised Euler equations for the case that $\psi(\eta) = \cos(\eta)$.

\section{A complex Hill's Equation}\label{sec:hill}

\begin{figure}
\includegraphics[width=6cm]{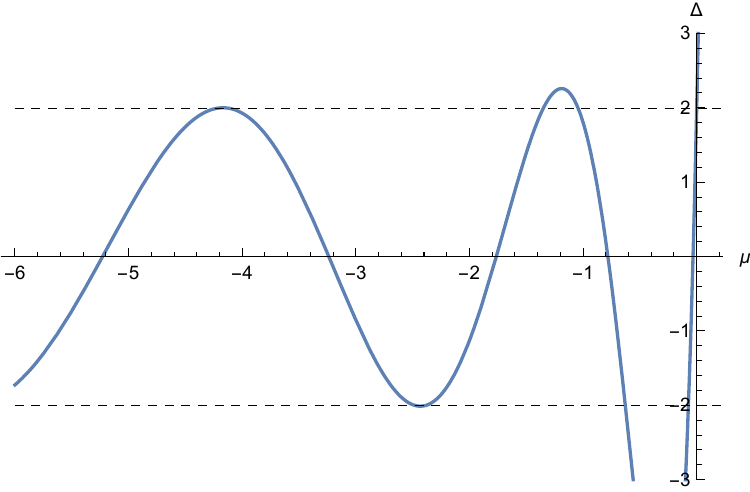} \hspace*{1ex}
\includegraphics[width=6cm]{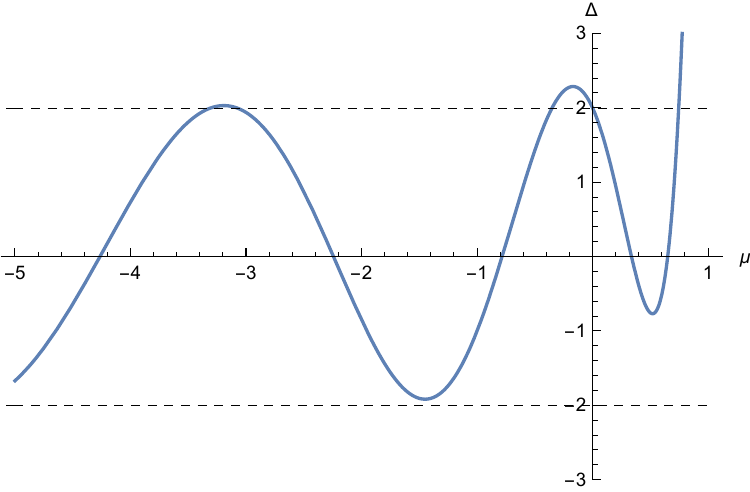}
\caption{Hill's discriminant $\Delta(\mu)$ for \cref{eq:hill}; dashed lines at $\pm 2$ indicate the range of $2 \cos 2 \pi \q$. 
Left: $c=2$; For real $c > 1$ there is no spectrum for $\mu \ge 0$.
Right: $c = 0.2 i$; For pureley imaginary $c$ there may be spectrum for $  \mu=d^2 > 0$ which is 
relevant for the Euler equation.
} \label{fig:delta}
\end{figure}

We now study  \cref{eq:hillgen} in detail for the specific example where $\psi(\eta) = \cos(\eta)$.
Substituting this into \cref{eq:hillgen} we obtain
\begin{equation}\label{eq:hill}
g'' + Q(\eta) g = \mu g, \quad Q(\eta) = \frac{\sin \eta}{c+\sin \eta} \,.
\end{equation}
Since $c \in \C$ this is a {\em Hill's equation} with complex valued periodic potential $Q(\eta)$ of period $2 \pi$. 
Much of the Hill's equation theory (for example from \cite{MW66}) is applicable, though a few key facts change. For example, the spectrum of the Hill's equation, that is, the values $\mu$ where there exists a bounded solution (on all of $\R$) to \cref{eq:hill}, is no longer real in general.

Writing Hill's equation \eqref{eq:hill} as a first order system the general solution is given by the principal fundamental solution 
matrix $M(\eta)$. After one period the matrix $M(2\pi)$ is called the monodromy matrix, and the trace of the monodromy 
matrix is called \emph{Hill's discriminant} $\Delta(\mu)$. Sometimes we will emphasise the dependence of 
the parameter $c$ that changes the function $Q(\eta)$ and will write $\Delta( \mu; c)$.
The boundaries of the intervals where bounded solutions exist are given by 
$\Delta(\mu) = 2$ with Floquet multipliers $+1$ and $\Delta(\mu) = -2$ with Floquet multipliers $-1$.
We are looking for solutions of \cref{eq:bound} with quasi-periodic boundary conditions with Floquet multipliers $e^{\pm 2\pi i \q}$, 
such that we require 
\begin{equation} \label{eq:disdef}
     \Delta(\mu) = 2 \cos( 2 \pi \q) \,.
\end{equation}
In \cref{fig:delta} we show the function $\Delta(\mu)$ for real and purely imaginary values of $c$.

We are now going to discuss some properties of the spectrum of the complex Hill's equation \eqref{eq:hill}. 
First we recall the classical case with real $c > 1$ (see \cref{fig:delta}). 
Then we consider the case of purely imaginary $c$ (see figures~\ref{fig:deltacimag} and~\ref{fig:cdplot}),
and finally the case of complex $c$ (see \cref{fig:complexmu}).

\subsection{Real $c$}
When $c \in \R$ then $Q(\eta)$ is a real valued $2\pi$-periodic function and the standard theory of the
Hill's equation applies when $c > 1$ (or $c < -1$) such that $Q(\eta)$ is smooth. \footnote{It is usually formulated for $\pi$-periodic function with average zero; in our case the average $g_0$ is non-zero and shifts the spectral parameter $\mu$.}
In particular we have (see, e.g.,~\cite{MW66,DLMF})
\begin{theorem}
For given $\q$ the equation $\Delta(\mu) = 2 \cos 2\pi \q$ has infinitely many solutions.
There is a largest solution $\mu^*$ such that for $\mu > \mu^*$ there are no solutions.
For analytic $Q(\eta)$ the discriminant $\Delta(\mu)$ is an entire function of $\mu$.
\end{theorem}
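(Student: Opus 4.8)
The plan is to prove the three assertions — entirety of $\Delta$, existence of a largest solution $\mu^*$, and the infinitude of solutions — by separating the analytic structure from the sign/asymptotic structure, since only the latter requires real work.

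First I would establish that $\Delta(\mu)$ is entire. Writing \cref{eq:hill} as the first-order system
\[
\partial_\eta \begin{pmatrix} g \\ g' \end{pmatrix} = A(\eta;\mu)\begin{pmatrix} g \\ g' \end{pmatrix}, \qquad A(\eta;\mu) = \begin{pmatrix} 0 & 1 \\ \mu - Q(\eta) & 0 \end{pmatrix},
\]
the coefficient matrix depends affinely on $\mu$ and continuously on $\eta$. The principal fundamental matrix $M(\eta;\mu)$ is then given by the Peano--Baker (Dyson) series, each term of which is a polynomial in $\mu$ with coefficients bounded uniformly on $\eta \in [0,2\pi]$; a Gronwall estimate shows the series converges uniformly on compact $\mu$-sets. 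Hence $M(2\pi;\mu)$, and therefore $\Delta(\mu) = \tr M(2\pi;\mu)$, is entire. Analyticity of $Q$ in $\eta$ is not needed for this step; it only promotes the solutions themselves to be analytic in $\eta$.

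Second, I would pin down the behaviour of $\Delta$ on the real $\mu$-axis, which is where the genuine work lies. For $\mu > \max_\eta Q(\eta)$ the coefficient $\mu - Q(\eta)$ is positive on all of $[0,2\pi]$; a convexity argument for $g'' = (\mu-Q)g$ shows that the solutions $c(\eta)$ (with $c(0)=1$, $c'(0)=0$) and $s(\eta)$ (with $s(0)=0$, $s'(0)=1$) satisfy $c(2\pi)>1$ and $s'(2\pi)>1$, whence $\Delta(\mu) = c(2\pi)+s'(2\pi) > 2$, and in fact $\Delta(\mu)\to+\infty$. In the opposite direction I would set $\mu = -s^2$ and apply a Liouville--Green (WKB) estimate with Gronwall error control to obtain $\Delta(-s^2) = 2\cos(2\pi s) + O(s^{-1})$ as $s \to +\infty$.

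Third, I would assemble these. The set $S = \{\mu \in \R : \Delta(\mu) = 2\cos 2\pi\q\}$ is closed, by continuity of $\Delta$, and bounded above, since $\Delta(\mu) > 2 \ge 2\cos 2\pi\q$ once $\mu > \max_\eta Q$. The estimate $\Delta(-s^2) = 2\cos(2\pi s) + O(s^{-1})$ shows that $\Delta(\mu) - 2\cos 2\pi\q$ changes sign infinitely often as $\mu \to -\infty$ (the $O(s^{-1})$ error cannot mask the transversal sign changes of $2\cos 2\pi s$), so by the intermediate value theorem $S$ is infinite, and in particular nonempty; the boundary cases $2\cos 2\pi\q = \pm 2$ are the periodic/antiperiodic problems and are handled the same way via the band edges. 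Being closed, bounded above, and nonempty, $S$ attains its supremum, so $\mu^* := \sup S$ is a solution and there are no solutions for $\mu > \mu^*$. The main obstacle is the second step: controlling the error terms in the asymptotics of $\Delta$ uniformly in $\mu$, both the growth as $\mu \to +\infty$ and especially the oscillation as $\mu \to -\infty$; once these are in hand, the entirety and the supremum argument are soft.
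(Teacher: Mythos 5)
Your overall route is the classical one: the paper does not actually prove this theorem (it quotes it from \cite{MW66,DLMF}), and your outline reconstructs the standard textbook argument faithfully. The Peano--Baker series gives entirety of $\Delta$ for merely integrable $Q$ (you are right that analyticity of $Q$ is not needed for this); the convexity argument gives $\Delta(\mu)>2$ for $\mu>\max_\eta Q$ and hence boundedness above of the solution set; the Liouville--Green estimate $\Delta(-\sigma^2)=2\cos(2\pi\sigma)+O(\sigma^{-1})$ as $\sigma\to+\infty$ is correct (the $O(\sigma^{-1})$ phase correction $\pi \bar Q/\sigma$ from expanding $\int_0^{2\pi}\sqrt{\sigma^2+Q}\,d\eta$ is absorbed into the error); and the closed-bounded-nonempty argument for $\mu^*=\sup S$ is fine. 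Incidentally, rename your asymptotic variable: the paper already uses $s$ for the Joukowski parameter of \cref{eq:s}, so writing $\mu=-s^2$ invites confusion.

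There is, however, one genuine gap: the band-edge cases $2\cos 2\pi\q=\pm2$, i.e.\ $\q\equiv 0$ or $\q\equiv 1/2 \pmod 1$, are \emph{not} ``handled the same way.'' Your intermediate value argument works only when the target value lies strictly between the extremes of the leading oscillation; at the extremes the sign changes of $2\cos 2\pi\sigma \mp 2$ are tangential, not transversal. Concretely, near $\sigma=n$ one has
\begin{equation*}
2\cos(2\pi\sigma)-2=-4\pi^2(\sigma-n)^2+O\bigl((\sigma-n)^4\bigr)\le 0,
\end{equation*}
and an admissible error of size $C/\sigma$ of a single sign can hold $\Delta+(-\sigma^2)-2$ strictly negative in a full neighbourhood of every large integer $n$: the function $2\cos(2\pi\sigma)-1/\sigma$ never attains $2$ for large $\sigma$, yet is consistent with your asymptotics. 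So the $O(\sigma^{-1})$ estimate alone cannot produce infinitely many periodic or antiperiodic eigenvalues. The standard repair (and essentially what \cite{MW66} does) is spectral rather than asymptotic: for real $Q$ (here $c\in\R$, $|c|>1$, the regime of the theorem), the operator $-d^2/d\eta^2-Q(\eta)$ on $L^2(0,2\pi)$ with periodic (resp.\ antiperiodic) boundary conditions is self-adjoint with compact resolvent, hence has infinitely many real eigenvalues, which are exactly the solutions of $\Delta(\mu)=2$ (resp.\ $\Delta(\mu)=-2$); alternatively, a two-term asymptotic expansion of $\Delta$ suffices. This case is not vacuous for the paper: $\q = k\,\bp\cdot\bq/p^2 \bmod 1$ vanishes identically whenever $\bp\cdot\bq=0$ (e.g.\ $\bp=(1,0)$, $\bq=(0,1)$), so the periodic boundary condition genuinely occurs and your proof must cover it.
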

In the standard theory the so called region of stability is defined as those $\mu$ for which $|\Delta(\mu)| \le 2$.
Even though we are interested in the case of positive $\mu = d^2 = k^2/p^4$ for the Euler equation, 
in \cref{fig:delta} we include negative $\mu$ to connect to the classical theory of Hill's equation. 

\subsection{Purely imaginary $c$}\label{subsec:pureimag}

\begin{figure}
\includegraphics[width=4cm]{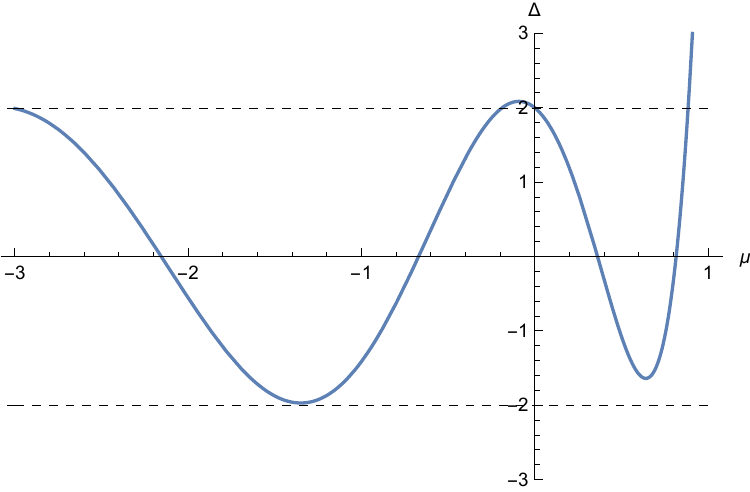} \hspace*{1ex}
\includegraphics[width=4cm]{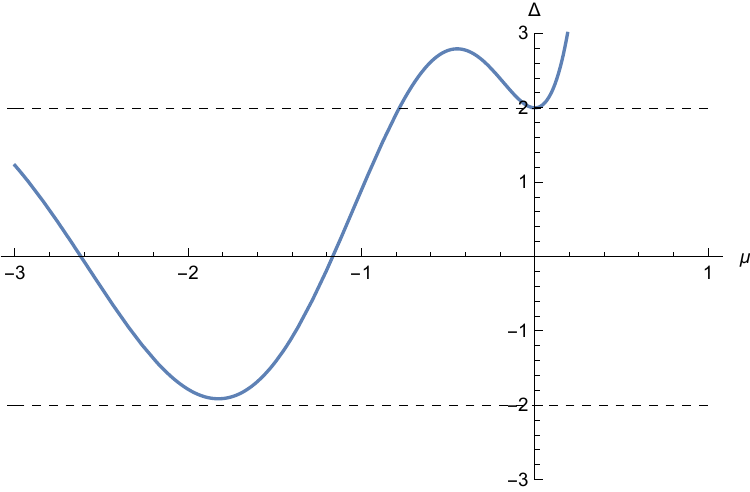} \hspace*{1ex}
\includegraphics[width=4cm]{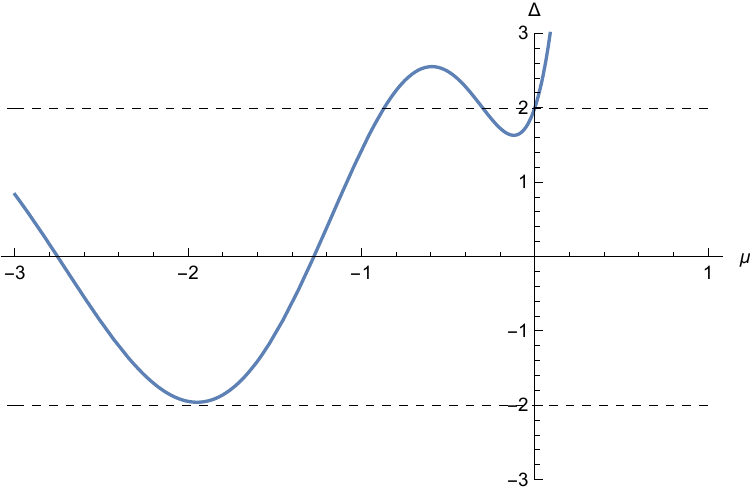}
\caption{Hill discriminant $\Delta(\mu)$ for \cref{eq:hill} with purely imaginary $c$ from left to right $c =  i/10, i/\sqrt{2}, i$;
dashed lines at $\pm 2$ indicate the range of $2 \cos 2 \pi \q$. 
For small $|c|$ there is spectrum with $ 0 < \mu < 1$. 
When $|c|$ reaches $1/\sqrt{2}$ the spectrum disappears and $\Delta'(0) = 0$.
For $|c| > 1/\sqrt{2}$ there is no spectrum with $\mu > 0$.
} \label{fig:deltacimag}
\end{figure}

As is evident in  \cref{fig:delta} right, the discriminant and the spectrum of the Hill's equation is real even though $Q(\eta)$ is 
not real for purely imaginary $c$.
The potential $Q(\eta)$ has the property that
$    
Q(\eta) = \overline{ Q(-\eta) }
$
which is called space-time or PT-symmetry \cite{Bender07},
which is a special case of so-called pseudo-hermiticity \cite{Scholtz92}, 
which means that there is a unitary involution that intertwines the operator with its adjoint \cite{caliceti04}.
The special case of the PT-symmetric Hill's equation has been studied in \cite{Bender99}.
The main conclusion from PT-symmetry in our case is that for purely imaginary $c$ Hill's discriminant is real 
even though the potential $Q(\eta)$ is complex, which we will see in the next section.
Note that as in \cite{Shin04} this does not imply that the whole spectrum is real. Indeed, in \cref{fig:delta} right, the local minimum of $\Delta(\mu)$ strictly between the lines $\Delta = \pm 2$ indicates the presence of eigenvalues with both nonzero real and imaginary parts. 

When $c$ is purely imaginary and sufficiently small there is some spectrum for $\mu > 0$, see, for example, \cref{fig:delta} right. 
The change in $\Delta(\mu)$ with increasing purely imaginary $c$ is illustrated in \cref{fig:deltacimag}.
From these graphs we observe that for $c = i\beta$ there is some spectrum with $\mu > 0$ near $\mu = 0$
for $0 \le \beta \le 1/\sqrt{2}$ and none for $\beta > 1/\sqrt{2}$. 
In the boundary case (\cref{fig:deltacimag} middle) we have $\Delta'(0) = 0$. 
The condition for existence of some spectrum with $\mu > 0$ is that $\Delta'(0) < 0$, since asymptotically 
$\Delta( \mu) \to +\infty$ for $\mu \to +\infty$.

\begin{figure}
\includegraphics[width=7cm]{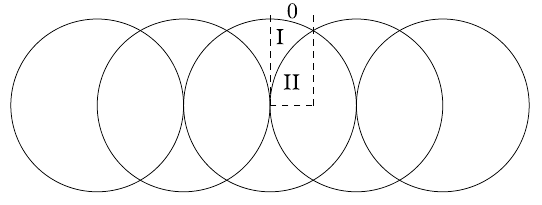}
\caption{Unit circles in the $(\q, d)$-plane with centres at $( l, 0)$ for  $l=-2,\dots,2$. 
On these circles $c = 0$ is in the spectrum. 
A fundamental domain is indicated by dashed lines. The numerals indicate the number of circles in which a given point lies, see \cref{def:regions}.}
\label{fig:circles5}
\end{figure}

The limiting case $\beta \to 0$ along the imaginary axis (and hence $c = 0$ in the limit) is easily solvable because then $Q \equiv 1$.
This limit corresponds to a case when the parameters of Euler's equation are on the boundary of the unstable disk, compare \cref{thm:disk}.
\begin{lemma}\label{lem:circle}
In the case $c = 0$ the real spectrum $\mu = d^2$ of the Hill's equation \eqref{eq:hill}  with boundary conditions \cref{eq:bound}
satisfies $(\q + l)^2 + d^2 = 1$ for any integer $l$.
\end{lemma}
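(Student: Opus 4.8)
The plan is to exploit the fact that at $c = 0$ the potential in \cref{eq:hill} collapses to a constant, reducing Hill's equation to an elementary constant-coefficient problem. Setting $c = 0$ gives $Q(\eta) = \sin\eta/\sin\eta = 1$ (the zeros of $\sin\eta$ being removable in the limit $c \to 0$, as remarked just before the statement), so \cref{eq:hillgen} becomes
\begin{equation*}
g'' + (1-\mu)\,g = 0 .
\end{equation*}
Writing $\nu := \sqrt{1-\mu}$, for $\mu \neq 1$ the solution space is spanned by $e^{\pm i \nu \eta}$, and these two solutions have Floquet multipliers $e^{\pm 2\pi i \nu}$ over one period.

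Next I would impose the quasi-periodic boundary conditions \cref{eq:bound}. Inserting $g = A e^{i\nu\eta} + B e^{-i\nu\eta}$ into both conditions and cancelling the common factor $i\nu \neq 0$ from the derivative condition, the system decouples (after adding and subtracting the two equations) into
\begin{equation*}
A\bigl(e^{2\pi i \nu} - e^{2\pi i \q}\bigr) = 0, \qquad B\bigl(e^{-2\pi i \nu} - e^{2\pi i \q}\bigr) = 0 .
\end{equation*}
A nontrivial solution requires $e^{2\pi i \nu} = e^{2\pi i \q}$ or $e^{-2\pi i \nu} = e^{2\pi i \q}$. Since $\q$ is real, either equality forces $\nu$ to be real with $\nu \equiv \pm\q \pmod 1$, that is $\nu = \pm(\q + l)$ for some integer $l$ (this is also what rules out genuine spectrum at $\mu > 1$, where the multipliers would be real and off the unit circle). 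Squaring gives $1 - \mu = (\q+l)^2$, and as $\mu = d^2$ this is precisely $(\q+l)^2 + d^2 = 1$; conversely every integer $l$ produces such a spectral value, so these exhaust the spectrum.

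Finally I would dispose of the degenerate case $\mu = 1$, where $\nu = 0$ and the solutions are $1$ and $\eta$: the derivative condition then forces $e^{2\pi i \q} = 1$, i.e.\ $\q \equiv 0$, so that $(\q + l)^2 + d^2 = 1$ holds with $l = 0$ and $d^2 = \mu = 1$. The argument is elementary throughout, so I do not expect a genuine obstacle; the only delicate point is the interpretation of the potential at $c = 0$, where $\sin\eta/(c+\sin\eta)$ is formally $0/0$ at the lattice of zeros of $\sin\eta$, so that $c = 0$ must be read as the limit $c \to 0$ along which $Q \to 1$ off a measure-zero set, exactly as indicated in the text preceding the lemma.
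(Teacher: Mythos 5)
Your proposal is correct and follows essentially the same route as the paper: at $c=0$ the potential collapses to $Q\equiv 1$, the constant-coefficient equation $g''+(1-d^2)g=0$ has solutions $e^{\pm i\sqrt{1-d^2}\,\eta}$, and the quasi-periodic conditions \cref{eq:bound} force $\q+l=\pm\sqrt{1-d^2}$, which squares to the stated circles. Your treatment is in fact slightly more careful than the paper's, since you also handle the degenerate case $\mu=1$, rule out $\mu>1$ via the off-unit-circle multipliers, and note the removable $0/0$ in $Q$ at the zeros of $\sin\eta$.
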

\begin{proof}
When $c = 0$ we have $Q = 1$, and the Hill's equation \eqref{eq:hill} simplifies to
$$
g'' + (1-d^2)g = 0 
$$
with solutions $g(\eta) = \exp( \pm i \sqrt{ 1 - d^2} \eta)$.
The  boundary condition \cref{eq:bound}  is
$g(2 \pi) = \exp(2\pi  i \q) g(0)$ and
gives $\q+l = \pm \sqrt{1-d^2}$ for integer $l$. 
Squaring both sides and re-arranging gives the result. 
\end{proof}

The equation $(\q + l)^2 + d^2 = 1$ for integer $l$ defines a sequences of circles  of radius one in the $(\q, d)$-plane.
The centres of the circles are at $(-l, 0)$. Neighbouring circles intersect in two points, and circles whose $l$'s differ by 2 touch in one point,
see \cref{fig:circles5}. Considering $\q$ as an angle with period 1 this becomes a picture
of a single circle with diameter two wrapped around a cylinder with circumference 1.
A convenient fundamental domain is $0  \le \q \le 1/2$ and $d \ge 0$ as indicated in \cref{fig:circles5}.
Since Floquet multipliers come in pairs $\exp( \pm 2 \pi i \q)$ one can restrict to non-negative $\q$.
Since the spectral parameter $\mu = d^2$ we can restrict to $d \ge 0$.

\begin{figure}
\includegraphics[width=8cm]{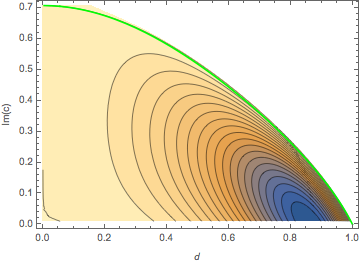}
\caption{Contours of $\Delta(\mu=d^2, c)$ on the  plane $(d, \Im(c))$ for $\Re(c) = 0$. The contours are equidistant in values of $\Delta$ from $-2$ to $2$ in steps of $0.2$. The maximum of $\Delta = 2$ occurs at $c = i/\sqrt{2}$, $d = 0$ and the curve
extends to $d = 1$. The minimal value $\Delta = -2$ is attained only at the single point 
where $c = 0$ and $d = \sqrt{3}/2$. The green curve is an approximation obtained from 
a $3\times 3$ Hill determinant $D_1$, see \cref{eq:hilldet33}. 
}
\label{fig:cdplot}
\end{figure}

Combining graphs like those in \cref{fig:deltacimag} into a single diagram we obtain \cref{fig:cdplot}, where we use $d$ instead of $\mu = d^2$ 
on the horizontal axis. The regions of stability where $|\Delta(d^2; c)| < 2$ are shaded and contourlines indicate constant $\Delta$ 
and the constant boundary condition given by $\q$. 
Note that his diagram directly relates the spectral parameter $\mu = d^2$ of the Hill's equation 
to the spectral parameter $c$ of Euler's equation for the case of purely imaginary $c$.
Along the axis $c = 0$   \cref{lem:circle} shows that $\Delta(d^2,c=0) = 2 \cos( 2 \pi \sqrt{ 1 - d^2})$.
Hence the minimum $\Delta = -2$ occurs at $d = \sqrt{3}/2$ corresponding to the intersection of the circles
in \cref{fig:circles}.

The intersection of the contour $\Delta = 2$ with the boundary $\mu =d= 0$ occurs 
at $c = i/\sqrt{2}$, as can also be seen in \cref{fig:deltacimag} (middle), where $\Delta'(\mu)=0$ at $\mu = 0$.
The following lemma proves this claim.
\begin{lemma}
The derivative of $\Delta$ with respect to $\mu$ at $\mu = 0$ is
\begin{equation} \label{eq:delmu}
     \Delta'(0) = \frac{2\pi^2c(1+2c^2)}{(c^2-1)^{3/2}}. 
\end{equation}
\end{lemma}
\begin{proof}
 \Cref{eq:hill} is of the form $g'' - h'' h^{-1} g = \mu g$ with 
$ h(\eta) = c + \sin\eta$, or more generally $h(\eta) = c -\psi'$ from \cref{eq:hillgen} where $\psi$ is a periodic stream function. 
Thus $g_1 = h$ is a solution for $\mu=0$ and $c \notin [-1,1]$.
Reduction of order gives a second solution $g_2 = h \int  h^{-2} \, d\eta$. 
Following \cite{MW66} the derivative $\Delta'$ is found by differentiating \cref{eq:hill} with 
respect to $\mu$ where $g = g(\eta; \mu)$. The resulting ODE is again \cref{eq:hill} but with a non-homogeneous term
and can be solved using variations of constants. 
{\color{black} 
Defining the principal fundamental solution matrix $$\begin{pmatrix} y_1(\eta;0) & y_2(\eta;0)\\ y_1'(\eta;0) & y_2'(\eta;0) \end{pmatrix} := \begin{pmatrix} g_1(\eta;0) &g_2(\eta;0) \\ g_1'(\eta;0) & g_2'(\eta;0) \end{pmatrix} \begin{pmatrix} g_1(0;0) &g_2(0;0) \\ g_1'(0;0) & g_2'(0;0) \end{pmatrix} ^{-1},$$
we apply formula (2.13) from \cite{MW66}. Combining all of the terms into one integral, and using the facts that $g_2(0) = 0$ and that $h$ and $h'$ are both periodic, formula (2.13) reduces to 
\begin{equation} \label{eq:deltaprime}
\Delta'(0) = - \int_0^{2\pi} h^2 d\eta \cdot \int_0^{2\pi} h^{-2} d \eta.
\end{equation}
}

The first integrals is elementary, and the second integral can be found using the method of residues,
and hence the result follows.
\end{proof}

{\color{black}
\begin{remark}
We note here that \cref{eq:deltaprime} is somewhat general. All that is required for it to hold is the existence of a non-vanishing periodic solution to the Hill's equation and formula (2.13) will reduce to \cref{eq:deltaprime}.
\end{remark}
}

The lemma shows that $\Delta'(0)$ is indeed zero at $c =i/\sqrt{2}$, at the top left corner in \cref{fig:cdplot}.
For $c\notin [-1,1]$, either real or purely imaginary, assuming that $\Delta'(0)$ is real and negative
(as shown in \cref{fig:delta} right and \cref{fig:deltacimag} left), there are points of spectrum of the Hill's equation nearby with real and positive $\mu$. 
These  satisfy some quasi-periodic boundary conditions, and hence there is some spectrum of \cref{eq:hill} on the real axis. 
For imaginary $c = i \beta, \beta\geq0$, the derivative $\Delta'(0)$ is negative for $\beta \in (0, 1/\sqrt{2})$ according to the lemma,
and hence there is spectrum nearby, as can be seen in \cref{fig:cdplot} to the right of the $d=0$ axis.

\subsection{Complex $c$}

For complex $c$ the Hill's equation \eqref{eq:hill} generally has a complex monodromy matrix, and hence also $\Delta(\mu)$ 
is complex in general. Our boundary conditions do imply, however, that $\Delta(\mu) = 2 \cos 2 \pi \q$ needs to be real.
This is a complex equation, so that we require $\Im(\Delta(\mu)) = 0$, which is indicated by red lines in \cref{fig:complexmu}.
Even though  complex $\mu$ is irrelevant for the application to the Euler equation it is helpful to see how 
the spectrum with real positive $\mu$ (if it exists) extends to the complex $\mu$-plane.
In \cref{fig:complexmu} we show a contourplot of $\Re(\Delta(\mu))$ on the complex $\mu$-plane for fixed $c$.
Overlaid are the curves where $\Im(\Delta(\mu))=0$ in red.
The reason to consider the curves of spectrum in the complex domain is that what we are looking for 
the special case when these curves intersect  the axis of real $\mu$ for positive $\mu$ and with $|\Delta(\mu) | < 2$.
In \cref{fig:complexmu} middle such a case can be seen as the intersection of the red curve of spectrum with the positive 
real $\mu$ axis inside the shaded region where $|\Delta(\mu) | < 2$ such that the boundary conditions are satisfied.
This intersection means that the complex value of $c$ for which this figure is drawn is a complex eigenvalue 
of the Euler equation with wave number $k=\pm \sqrt{\mu} p^2$.

\begin{figure}
\includegraphics[width=4.8cm]{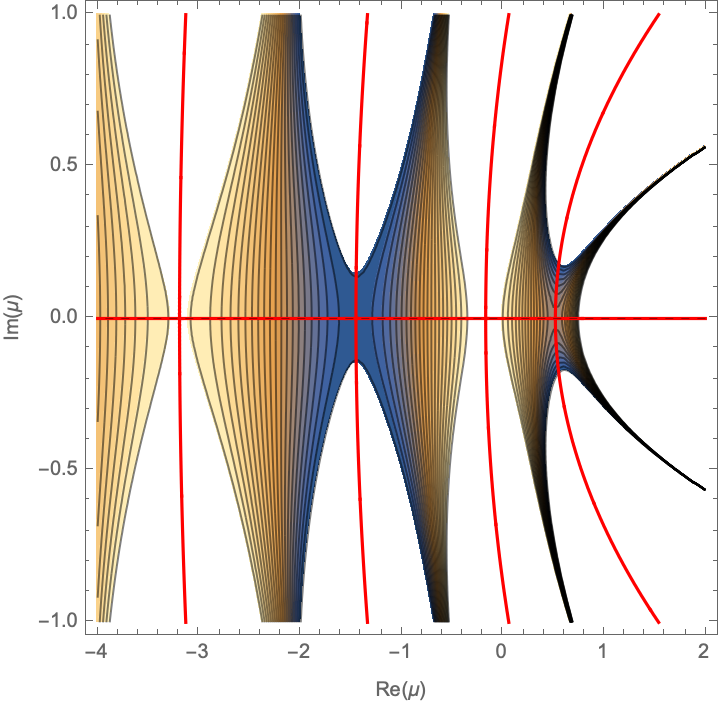}
\includegraphics[width=4.8cm]{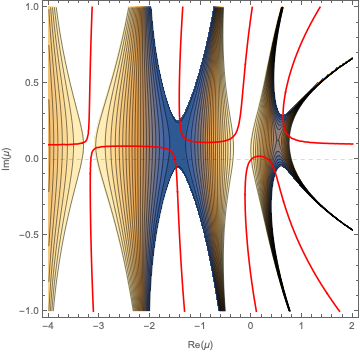}
\includegraphics[width=4.8cm]{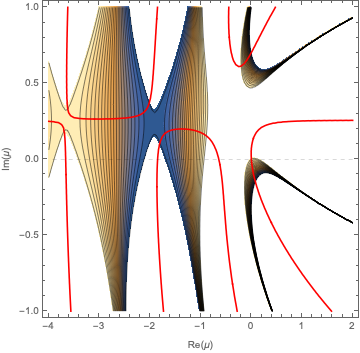}
\caption{Contours of $\Re(\Delta(\mu)) $ on the complex $\mu$-plane with zero-locus of $\Im(\Delta(\mu))$ in red
for $c = i 0.2$ (left), $c = 0.1 + i 0.2$ (middle) and $c=0.5 + i 0.7$ (right). 
Contourlines are corresponding to values $-2, \dots, 2$ (from yellow to blue) in steps of $0.2$.
In the middle figure there is a real {\em positive} value of $\mu$ in the spectrum, but not in the right figure.
}
 \label{fig:complexmu}
\end{figure}

After this brief overview of the spectrum of the complex Hill's equation we are now going to compute the
Hill discriminant as a Fredholm determinant and using this formula construct an Evans function for the linearised Euler equation.

\section{A Hill determinant}\label{sec:det}

In order to prove facts about the spectrum of the linearised Euler equations via the discriminant of Hill's equation we use Hill's determinant. 
This is slightly more general than in \cite{MW66} because here $Q(\eta)$ 
is a complex valued function, but this does not affect the computation of the Hill determinant.
We need to first find the complex Fourier series of the function 
$Q(\eta)$ defined in \cref{eq:hill}.
The Fourier series is  conveniently expressed in terms of a parameter $s$ related to $c$ by
\begin{equation}  \label{eq:s}
    c = \frac{1}{2}\left( s+ \frac{1}{s} \right) \,.
\end{equation}
This is the well-known conformal Joukowski map that transforms the upper half-disk of radius 1
into the lower half plane. The upper half of the unit circle $s = e^{i \phi}$ is mapped onto the unit interval 
$c = \cos \phi$, and circles of radius $r < 1$ are mapped to ellipses with semi-major axis $(r+1/r)/2$ 
and semi-minor axis $(r-1/r)/2$.
The two solutions of the quadratic equation for $s$ in terms of $c$ are combined to produce a single-valued
inverse map from $s(c)$ such that for $|c| \to \infty$ we have $s \to 0$ in the whole complex plane. 
The mapping $s(c)$ is not continuous along the branch cut where the argument of the square root is negative,
which occurs along the real interval $[-1, 1]$. Later we will see that this interval corresponds to the continuous 
spectrum of the linearised Euler equation.
The mapping $s(c)$ is holomorphic on $\C \setminus [-1, 1]$.
One can check that this mapping from $c$ to $s$ implies $|s| \le 1$ everywhere 
and $|s| = 1$ is attained along the branch cut. 
The inverse function $s(c)$ satisfies $s(c) = -s(-c)$ and $s(\bar c) = \overline{s(c)}$, and 
so it is sufficient to consider the positive quadrant of the $s$-plane.
Substituting $c = c(s)$ into $Q(\eta)$ gives 
\begin{equation}
Q(\eta)   = \frac{1}{\frac{i \left(s+\frac{1}{s}\right)}{\left(e^{i\eta}-e^{-i\eta}\right)} + 1}  = 
1 - \frac{1-s^2}{1+s^2}\left( \frac{1}{1-  s i e^{i\eta}} + 1 - \frac{1}{1 -  s /( i e^{i\eta}) }  \right)  
\end{equation}
Expanding this as geometric series in $s$ gives the Fourier series $Q(\eta) = \sum_{k=-\infty}^{\infty} g_k e^{ik \eta}$ where 
$$
 g_0 = 1 + \kappa, \quad 
g_k = \kappa i^k s^{|k|} , 
\quad
\kappa  = -\frac{ 1 + s^2}{ 1 - s^2} \,.
$$
Because $|s|<1$ the Fourier coefficients decay exponentially and the series converges.
Notice that because $|s| < 1$ the factor $\kappa$ is in the left half plane, $\Re(\kappa) < 0$.
Expanding a solution of Hill's equation as a Fourier series the solvability condition leads to the normalised Hill determinant.
Following \cite{MW66} in our particular case we find
\begin{proposition}
The Hill-determinant of Hill's equation \eqref{eq:hill} is
\begin{equation} \label{eq:HillD}
    D(\Lambda)= \left|\left| \frac{\tilde{g}_{n-m}}{\Lambda-n^2} + \delta_{n,m} \right|\right|
\end{equation}
where $\tilde{g}_{k\neq0} = g_{k}$ and $\tilde{g}_0 =0$. The Hill-discriminant is
\begin{equation}\label{eq:discrim}
            \Delta(\mu) =  
                  2 - 4 D(g_0-\mu) \sin^2(\pi \sqrt{g_0-\mu}) \,.
\end{equation}
\end{proposition}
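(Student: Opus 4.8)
The plan is to run the classical Floquet--Fourier--Hill construction (as in \cite{MW66}), adapted to the complex potential. First I would seek Floquet solutions of \cref{eq:hill} of the form $g(\eta)=e^{i\nu\eta}\sum_{n\in\Z}c_n e^{in\eta}$ with Floquet exponent $\nu$. Substituting and matching the coefficient of $e^{i(n+\nu)\eta}$, the convolution $Qg=\sum_n(\sum_m g_{n-m}c_m)e^{i(n+\nu)\eta}$ turns \cref{eq:hill} into the infinite linear system $\sum_m g_{n-m}c_m=(\mu+(n+\nu)^2)c_n$. Splitting off the diagonal $g_0$ term and setting $\Lambda:=g_0-\mu$, this reads $((n+\nu)^2-\Lambda)c_n=\sum_{m\neq n}\tilde g_{n-m}c_m$ with $\tilde g_0=0$. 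Dividing row $n$ by $(n+\nu)^2-\Lambda$ produces the matrix $\delta_{nm}+\tilde g_{n-m}/(\Lambda-(n+\nu)^2)$; because the given coefficients $g_k=\kappa i^k s^{|k|}$ decay like $|s|^{|k|}$ with $|s|<1$, the off-diagonal part is trace class and the Fredholm determinant $D(\Lambda,\nu)$ converges. A nontrivial $\nu$-quasiperiodic solution exists precisely when $D(\Lambda,\nu)=0$, and the advertised $D(\Lambda)$ is the special value $D(\Lambda,0)$; the $3\times3$ array is just the truncation to indices $n,m\in\{-1,0,1\}$ after inserting the explicit $g_k$.

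The heart of the proof is the identity relating $D(\Lambda,\nu)$ to $\Delta$, and here I would argue by Liouville. As a function of $\nu$, $D(\Lambda,\nu)$ is meromorphic, has period $1$ (shifting $\nu\mapsto\nu+1$ relabels the index $n$ and leaves the determinant invariant), and tends to $1$ as $\Im\nu\to\pm\infty$ (all denominators blow up and the trace-class perturbation vanishes). Its only singularities are poles where some denominator vanishes, i.e.\ at $\nu\equiv\pm\sqrt\Lambda\pmod 1$; with the row normalization exactly one row diverges at each such $\nu$, so the determinant, being linear in that row, has only a simple pole there. Since $\cos2\pi\nu-\cos2\pi\sqrt\Lambda$ has simple zeros at precisely these points, is entire and $1$-periodic, the product
\[
\bigl(D(\Lambda,\nu)-1\bigr)\,\bigl(\cos2\pi\nu-\cos2\pi\sqrt\Lambda\bigr)
\]
is entire, $1$-periodic and bounded on a period strip, hence a constant $K(\Lambda)$ by Liouville. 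This yields the representation $D(\Lambda,\nu)=1+K(\Lambda)/(\cos2\pi\nu-\cos2\pi\sqrt\Lambda)$.

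It then remains to pin down $K(\Lambda)$ and convert to the discriminant. Setting $\nu=0$ gives $D(\Lambda)=D(\Lambda,0)=1+K(\Lambda)/(1-\cos2\pi\sqrt\Lambda)=1+K(\Lambda)/(2\sin^2\pi\sqrt\Lambda)$, so $K(\Lambda)=2\sin^2(\pi\sqrt\Lambda)\,(D(\Lambda)-1)$. On the other hand, $e^{2\pi i\nu}$ is a Floquet multiplier exactly when $D(\Lambda,\nu)=0$, i.e.\ $\cos2\pi\nu=\cos2\pi\sqrt\Lambda-K(\Lambda)$, and by definition the discriminant is $\Delta(\mu)=\tr M(2\pi)=2\cos2\pi\nu$. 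Combining these with $2\cos2\pi\sqrt\Lambda=2-4\sin^2\pi\sqrt\Lambda$ and $\Lambda=g_0-\mu$ collapses everything to $\Delta(\mu)=2-4D(g_0-\mu)\sin^2(\pi\sqrt{g_0-\mu})$, which is even in $\sqrt\Lambda$ and hence free of branch ambiguity.

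The main obstacle I anticipate is the analytic bookkeeping behind the Liouville step: justifying convergence of the infinite determinant and its limit at $i\infty$ uniformly enough to treat $D(\Lambda,\nu)$ as a genuine meromorphic function of $\nu$, and verifying that the poles at $\nu\equiv\pm\sqrt\Lambda$ are simple (not merely isolated) so that the product above is truly entire. The complex, non-symmetric potential (note $g_{-k}=(-1)^k g_k$) means one cannot lean on self-adjointness, but since the entire argument is driven by the exponential decay $|g_k|\sim|s|^{|k|}$ rather than by reality, the Magnus--Winkler machinery carries over essentially unchanged.
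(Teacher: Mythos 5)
Your overall route is exactly the one the paper takes: the paper computes the Fourier coefficients $g_k=\kappa i^k s^{|k|}$ via the Joukowski substitution and then invokes the classical Floquet--Fourier--Hill machinery of \cite{MW66}, noting only that complex-valued $Q$ does not disturb it. Your reconstruction of that machinery is sound in its setup: the convolution system $\sum_m g_{n-m}c_m=(\mu+(n+\nu)^2)c_n$, the splitting $\Lambda=g_0-\mu$ with $\tilde g_0=0$, the normalisation producing $\delta_{nm}+\tilde g_{n-m}/(\Lambda-(n+\nu)^2)$, the convergence of the infinite determinant from $|g_k|\le|\kappa||s|^{|k|}$ with $|s|<1$ (the paper calls this ``of Hill type''), the identification of the $3\times 3$ array as the $n,m\in\{-1,0,1\}$ truncation, and the final algebra collapsing $K(\Lambda)=2\sin^2(\pi\sqrt\Lambda)(D(\Lambda)-1)$ and $\Delta(\mu)=2\cos 2\pi\nu$ into \cref{eq:discrim} are all correct.

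However, the Liouville step fails as written, and you correctly sensed where the trouble lies. The product $\bigl(D(\Lambda,\nu)-1\bigr)\bigl(\cos 2\pi\nu-\cos 2\pi\sqrt\Lambda\bigr)$ is entire and $1$-periodic, but it is \emph{not} bounded on a period strip by any a priori estimate: as $\Im\nu\to\pm\infty$ one has $|\cos 2\pi\nu|\sim\tfrac12 e^{2\pi|\Im\nu|}$, while the available bound on the determinant, coming from $|\Lambda-(n+\nu)^2|\ge (n+\Re\nu)^2+(\Im\nu)^2-|\Lambda|$ and summing over $n$, gives only $|D(\Lambda,\nu)-1|=O(1/|\Im\nu|)$, i.e.\ polynomial rather than exponential decay. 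Boundedness of the product is equivalent to the constancy you are trying to prove, so invoking Liouville here is circular. Two standard repairs: (a) the cotangent subtraction of Whittaker--Watson and \cite{MW66} --- show $D(\Lambda,\nu)-C\bigl[\cot\pi(\nu-\sqrt\Lambda)-\cot\pi(\nu+\sqrt\Lambda)\bigr]$ is entire, periodic and genuinely bounded (the cotangents tend to $\mp i$ at $\pm i\infty$), hence constant, then evaluate at $i\infty$ and use $\cot a-\cot b=\sin(b-a)/(\sin a\sin b)$ to recover your representation; or (b) sharpen Liouville: in the variable $w=e^{2\pi i\nu}$ your product is holomorphic on the punctured plane with at worst simple poles at $w=0,\infty$, so its Laurent expansion is $a_{-1}w^{-1}+a_0+a_1w$, and the $O(1/|\Im\nu|)$ decay of $D-1$ forces $a_{\pm1}=0$. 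With either fix the argument goes through. Two smaller points worth a sentence each: your simple-pole claim needs the caveat $2\sqrt\Lambda\notin\Z$ (the coincidence $\nu\equiv\sqrt\Lambda\equiv-\sqrt\Lambda$ is then handled by continuity in $\Lambda$), and since the potential is not even ($g_{-k}=(-1)^kg_k$), the evenness $D(\Lambda,\nu)=D(\Lambda,-\nu)$ --- needed so that $D=0$ captures both multipliers $e^{\pm2\pi i\nu}$ and $\Delta=2\cos2\pi\nu$ is legitimate --- requires the conjugation by the index flip composed with $\mathrm{diag}(\dots,(-1)^n,\dots)$, the same trick the paper uses after \cref{lem:Ham}.
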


This is Hill's fundamental result which relates the trace of the principal fundamental solution matrix of \cref{eq:hill}
-- the Hill discriminant $\Delta(\mu)$ -- to the Hill determinant $D$.
In his original work \cite{Hill1878} Hill used  these equations to determine the Floquet exponent and hence the stability of a particular solution.
More generally Hill's formula can be viewed as an equation for the spectral parameter $\mu = d^2$ for which quasiperiodic solutions with 
Floquet exponent $\q$ exist. 
Instead in our setting the spectral parameter $d$ and the Floquet exponent $\q$ are given, 
and we consider this as an equation for the parameter $c$ (or, equivalently $s$)
for which such a solution exists. Here the Fourier coefficient $g_0$ and the higher order coefficients $g_k$ hidden in the 
definition of Hill's determinant all depend on $s$.
This is a change of interpretation by which the original spectral parameter $\lambda$ and hence $c$ of the 
linearised Euler equations becomes a parameter in Hill's equation, while the spectral parameter $\mu = d^2$ in Hill's equation
is a parameter in the separated Euler's equation.

The three cases discussed in the previous section now appear like this:
1)~For real $c > 1$ we have real $s < 1$ and hence the Hill determinant is Hermitian and the Hill discriminant is real for real $\mu$.
2)~For purely imaginary $c$ we have purely imaginary $s$, so that all $g_k$ are real because $i^k s^{|k|}$ gives an even power of $i$.
Hence the Hill determinant is real and the Hill discriminant is real for real $\mu$.
3)~For complex $c$ also $s$ is complex and the Hill determinant and discriminant are complex.

The beauty of Hill's formula is that it gives a very good approximation already for small truncation sizes. 
Instead of the determinant of a bi-infinite matrix $D(\Lambda)$ denote by $D_k(\Lambda)$ the determinant
of the finite matrix truncated to $m,n = -k, \dots, k$. Thus the truncation of $D(\Lambda)$ with $k=1$ (and hence matrix
size $3\times 3$) becomes
\begin{equation}
 D_1(\Lambda)  = \left|\left|
\begin{array}{ccc}
 1 & -\frac{i \kappa s}{\Lambda } & -\frac{\kappa s^2}{\Lambda -1} \\
 \frac{i \kappa s}{\Lambda -1} & 1 & -\frac{i \kappa s}{\Lambda -1} \\
 -\frac{\kappa s^2}{\Lambda -1} & \frac{i \kappa s}{\Lambda } & 1 \\
\end{array}
\right|\right| \,.
\end{equation}
The quality of approximation was important in Hill's original application to the Moon, where he found good results
already from $3\times 3$ determinants \cite{Hill1878}. Even for such severe truncation a figure similar to \cref{fig:cdplot}
computed from the approximation $D_1(\Lambda)$ gives a qualitatively correct answer.
This is particularly true when asking for $\Delta = 2$ which implies $D = 0$.
Now $D$ factors because of discrete symmetry and expressing $s^2$ in terms of $\kappa$ 
in  the $3\times 3$ truncation of the Hill determinant gives
\begin{equation} \label{eq:hilldet33}
   D_1( 1 + \kappa - d^2) = \frac{ d^2 (  ( 2 + d^2) \kappa - d^2 ) ( 3 \kappa^2 - d^2 \kappa - 1 + d^2)}{ (1 + \kappa - d^2)( d^2 - \kappa)^2(1 - \kappa)^2}
\end{equation}
The vanishing of the last factor in the numerator
is shown as a green line in \cref{fig:cdplot}. The two endpoints of the green line correspond to the roots of this factor where
$(d, \kappa) = (1, 0)$ and hence $c =0$ and $(d, \kappa) = (0, -1/\sqrt{3})$ and hence 
$c = i/\sqrt{2}$. It should be noted that even though the result at the two endpoints is exact, 
for intermediate points along the boundary curve there are corrections from incorporating higher order
approximations to the determinant.
As already noted, along the lower boundary of \cref{fig:cdplot} where $c = 0$ the exact answer
$\Delta(d^2)$ is obtained from Hill's formula where $D = 1$ since $s = \pm i$ and hence $\kappa = 0$.
When $d=0$ we always have $g = c + \sin\eta$ as a solution that is $2\pi$ periodic, and hence
$\Delta = 2$. For the Hill discriminant this leads to $\Delta(0) = 2$, and hence $D(g_0) = 0$. 
This surprising identity appears because for $d = 0$ the three central columns of the Hill determinant 
are linearly dependent.
It should be noted that the $3\times 3$ approximation to the Hill determinant produces the 
exact values at the edges $d =0$ and $c = 0$ of \cref{fig:cdplot}.

In \cref{sec:evans} we show that when the circles in $(\q,d)$ space from \cref{lem:circle} where $c=0$ are crossed in the correct
direction then a pair of real eigenvalues is born. This is based on the following well-known formula, see, for example, \cite{GohbergKrein69}:
\begin{theorem} \label{lem:freddet}
The derivative of a Fredholm determinant $f = \det F$ is given by $\dot f = f {\rm tr} (F^{-1} \dot F).$
\end{theorem}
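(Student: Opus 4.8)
The plan is to treat $f = \det F$ as a Fredholm determinant of an operator $F = I + K$ with $K$ trace-class, and to exploit the two structural properties of such determinants: multiplicativity, $\det(F_1 F_2) = \det F_1\,\det F_2$, and the first-order expansion $\det(I + \varepsilon A) = 1 + \varepsilon\,\mathrm{tr}(A) + O(\varepsilon^2)$ valid for trace-class $A$. Throughout, $F = F(t)$ depends smoothly on the relevant parameter $t$ (in our application $t$ is $\mu$ or $c$), the dot denotes $d/dt$, and we assume $F$ is invertible at the point of interest, so that $f \neq 0$ and $F^{-1}$ is a bounded operator.

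First I would record the two ingredients. The first-order expansion $\det(I + \varepsilon A) = 1 + \varepsilon\,\mathrm{tr}(A) + O(\varepsilon^2)$, valid for trace-class $A$, is the linearisation in $\varepsilon$ of the identity $\det(I + \varepsilon A) = \exp\bigl(\mathrm{tr}\log(I + \varepsilon A)\bigr)$, or equivalently the linear term of the defining series of the Fredholm determinant in powers of $\varepsilon$. Multiplicativity, $\det(F_1 F_2) = \det F_1\,\det F_2$, is the standard product rule for Fredholm determinants \cite{GohbergKrein69}.

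The main computation is then a difference-quotient argument. Writing $F(t+h) = F(t) + h\dot F(t) + o(h)$ and factoring out $F(t)$ on the left gives
\[
 F(t+h) = F(t)\bigl(I + h\,F(t)^{-1}\dot F(t) + o(h)\bigr).
\]
Applying multiplicativity and then the first-order expansion with $\varepsilon = h$ and $A = F(t)^{-1}\dot F(t)$ yields
\[
 f(t+h) = f(t)\,\bigl(1 + h\,\mathrm{tr}\bigl(F(t)^{-1}\dot F(t)\bigr) + o(h)\bigr).
\]
Subtracting $f(t)$, dividing by $h$ and letting $h \to 0$ gives $\dot f = f\,\mathrm{tr}(F^{-1}\dot F)$, as claimed.

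The part requiring care—the main obstacle—is purely functional-analytic: one must verify that the operators lie in the correct trace ideal so that every determinant and trace written above is defined. Concretely, one needs $K$ and $\dot F$ trace-class and the product $F^{-1}\dot F$ trace-class, so that both $\mathrm{tr}(F^{-1}\dot F)$ and $\det(I + hF^{-1}\dot F)$ make sense, together with invertibility of $F$ so that $f \neq 0$ and the left factorisation is legitimate. In the present application these hypotheses hold because the entries of the Hill determinant decay exponentially in $|n-m|$, and its derivative is an operator of the same exponentially-decaying type; hence $K$, $\dot F$ and their products are trace-class, and $F^{-1}$ exists precisely away from the zeros of $f$, which is exactly where the formula is applied. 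An equivalent route that avoids the factorisation is to differentiate $\log f = \mathrm{tr}\log F$ directly, using that $\frac{d}{dt}\mathrm{tr}\log F = \mathrm{tr}(F^{-1}\dot F)$ even when $F$ and $\dot F$ do not commute; this gives $\dot f / f = \mathrm{tr}(F^{-1}\dot F)$ at once.
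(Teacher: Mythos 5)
Your proof is correct, but there is nothing in the paper to compare it against: the paper states this result without proof, quoting it as a well-known formula with a pointer to \cite{GohbergKrein69}. Your difference-quotient argument --- factor $F(t+h) = F(t)\bigl(I + h\,F^{-1}\dot F + o(h)\bigr)$, apply multiplicativity, then the expansion $\det(I+\varepsilon A) = 1 + \varepsilon\,\mathrm{tr}(A) + O(\varepsilon^2)$ for trace-class $A$ --- is precisely the standard derivation found in such references, so in effect you have supplied the omitted textbook proof rather than an alternative to anything in the paper. The steps are sound, with the usual proviso (which you implicitly use) that the $o(h)$ remainder is small in trace norm, so that continuity of the determinant applies.

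Two small corrections to your functional-analytic housekeeping. First, exponential decay of the entries in $|n-m|$ alone does \emph{not} place an infinite matrix in the trace class: a Toeplitz matrix with exponentially decaying symbol coefficients is bounded but not even compact. What makes the Hill matrix in \cref{eq:HillD} of determinant class is the additional factor $1/(\Lambda - n^2)$, which decays like $n^{-2}$ along each diagonal; combined with the decay $|g_{n-m}| \le |\kappa|\,|s|^{|n-m|}$ this gives $\sum_{n\neq m} |F_{nm}| < \infty$, which is exactly the ``Hill type'' condition the paper invokes from \cite{MW66}, and which implies $F - I$ is trace class since each rank-one matrix unit has trace norm $1$. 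The same reasoning handles $\dot F$, whose entries carry either an extra power of $(\Lambda - n^2)^{-1}$ (for the $\mu$-derivative) or differentiated Fourier coefficients still decaying geometrically (for the $c$-derivative); note this is where the paper's standing hypothesis $1-\mu \neq n^2$ enters, keeping all denominators bounded away from zero. Second, you need not assume separately that $F^{-1}\dot F$ is trace class: the trace class is a two-sided ideal in the bounded operators, so this is automatic once $\dot F$ is trace class and $F^{-1}$ is bounded. Your alternative route via $\frac{d}{dt}\,\mathrm{tr}\log F = \mathrm{tr}(F^{-1}\dot F)$ is valid but the noncommutative differentiation of the logarithm itself requires justification (e.g.\ an integral representation), so it is not genuinely shorter than the factorisation argument you gave.
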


In general the computation of $F^{-1}$ may of course be difficult, but in our case this turns out to be simple at $c = 0$.

\begin{lemma} \label{lem:trace}
The derivative of the Hill determinant $D(g_0 - \mu)$ with respect to $\mu$  at $c = 0$ vanishes.
The derivative of the Hill determinant $D(g_0 - \mu)$ with respect to $c$ at $c = 0$ vanishes.
In both cases it is assumed that $1 - \mu \not = n^2$ for any integer $n$.
\end{lemma}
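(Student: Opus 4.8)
The plan is to exploit the fact that at $c = 0$ the Hill determinant degenerates completely, so that the Fredholm-determinant derivative formula of \cref{lem:freddet} collapses to the trace of a single matrix whose diagonal is identically constant.

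First I would record the Fourier data at $c = 0$. Since $c = \tfrac12(s + 1/s)$, the value $c = 0$ corresponds to $s = \pm i$, hence $s^2 = -1$ and $\kappa = -\frac{1+s^2}{1-s^2} = 0$. Consequently $g_0 = 1 + \kappa = 1$ and $g_k = \kappa\, i^k s^{|k|} = 0$ for every $k \neq 0$. Writing the Hill determinant as $D = \det F$ with $F_{nm} = \frac{\tilde g_{n-m}}{\Lambda - n^2} + \delta_{nm}$ and $\Lambda = g_0 - \mu$, every off-diagonal entry carries the factor $\tilde g_{n-m} = g_{n-m} = 0$, and the diagonal carries $\tilde g_0 = 0$; thus $F = I$ at $c = 0$. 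The hypothesis $1 - \mu \neq n^2$ is exactly what guarantees $\Lambda - n^2 = 1 - \mu - n^2 \neq 0$ there, so that the entries are finite and this evaluation is legitimate (consistent with $D = 1$ along the line $c = 0$ noted earlier).

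Next I would apply \cref{lem:freddet}, $\dot D = D\,\mathrm{tr}(F^{-1}\dot F)$, where the dot denotes either $\partial_\mu$ or $\partial_c$. At $c = 0$ we have $F = I$, so $F^{-1} = I$ and the formula reduces to $\dot D = \mathrm{tr}(\dot F) = \sum_n (\dot F)_{nn}$. The decisive observation is that the diagonal entries are constant in both $\mu$ and $c$: because $\tilde g_0 = 0$ by definition, $F_{nn} = \frac{0}{\Lambda - n^2} + 1 = 1$ for all admissible $\mu$ and $c$, so $(\dot F)_{nn} = 0$ identically. Hence $\mathrm{tr}(\dot F) = 0$ and $\dot D = 0$. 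This single computation settles both assertions at once: the same vanishing of the diagonal derivative yields $\partial_\mu D = 0$ and $\partial_c D = 0$ at $c = 0$.

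The point requiring most care is the meaning of $\partial_c$ at $c = 0$, since $c = 0$ lies on the branch cut $[-1,1]$ of $s(c)$. In the regime of interest $c$ is purely imaginary, $c = i\beta$, and one checks that the admissible root $s = i\bigl(\beta - \sqrt{1+\beta^2}\bigr)$ is real-analytic in $\beta$ with finite one-sided derivative $ds/d\beta\big|_{\beta=0} = i$; thus $\dot F$ exists along the imaginary axis and its off-diagonal entries stay finite. In any event the diagonal argument is insensitive to $ds/dc$, since $F_{nn}\equiv 1$. The remaining technicalities are the trace-class justification of \cref{lem:freddet}, which is harmless here: $F - I$ vanishes at $c = 0$ and has exponentially decaying entries ($|s|<1$) in a neighbourhood, so the trace in $\mathrm{tr}(\dot F)$ converges absolutely.
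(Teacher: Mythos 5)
Your proposal is correct and follows essentially the same route as the paper's proof: evaluate the Fourier data at $c=0$ to get $s=\pm i$, $\kappa=0$, hence $F=F^{-1}=I$, then apply \cref{lem:freddet} and observe that the diagonal of $F$ is identically $1$ (since $\tilde g_0=0$), so $\mathrm{tr}\,\dot F=0$ for both the $\mu$- and $c$-derivatives. Your extra remarks on differentiability of $s(c)$ across the branch cut and on convergence of the trace are sensible refinements of the same argument, not a different approach.
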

\begin{proof}
From \cite{MW66} we have that $D(g_0-d^2) = \det F$ is a Fredholm determinant.
Indeed, there it is shown 
to be of Hill type, 
which means that the sum of the absolute values of the off-diagonal entries converges.
A necessary condition for this is $\sum |g_k| < \infty$ 
which holds for $|s| < 1$ and for $s = \pm i$ (so that $\kappa = 0$), 
and hence for all $c \in \C$ except for $c \in [-1,0) \cup (0, 1]$.
The assumption that $1 -\mu \not= n^2$ ensures that the denominators $\Lambda - n^2$ in \cref{eq:HillD} are non-zero.
So in particular we can apply \cref{lem:freddet} at $c = 0$ 
{\color{black}
as long as $d \not = 0$ and $d \not = 1$.
}
For this we need $F^{-1}$ at $c =0$. Now $c = 0$ implies $s = \pm i$ and $\kappa = 0$, 
hence all Fourier coefficients $g_k = 0$ and $F$ and $F^{-1}$ are the identity.
Thus the trace of $\dot F$ needs to be computed. 
For either derivative the main observation is that the diagonal of $F$ is constant, 
and hence the diagonal of $\dot F$ vanishes.
Thus ${\rm tr} \dot F = 0$ and both results follow.
\end{proof}

Note that this proof was easy because for $c =0$ we have $Q = \psi'''/\psi' = \pm 1 = const$ for $\psi'(\eta) = \sin(\eta)$ or $\cos(\eta)$, 
and hence the Fourier series of $Q$ is trivial.
For a more general periodic function $F$ has additional entries, and hence $F^{-1}$ is more complicated,
and even though the diagonal of $\dot F$ still vanishes the result does not immediately follow.

{\color{black}
Even though $d=1$ is excluded in the above proof, the Hill discriminant (and its derivatives) are of course well defined
in this case as well. This is so because the Hill-determinant $D$ is multiplied by $\sin^2( \pi \Lambda)$ in 
\cref{eq:discrim}, and $\Lambda = 0$ for $d = 1$ and $c = 0$.
}

\section{An Evans function}\label{sec:evans}

\begin{figure}
\includegraphics[width=4.4cm]{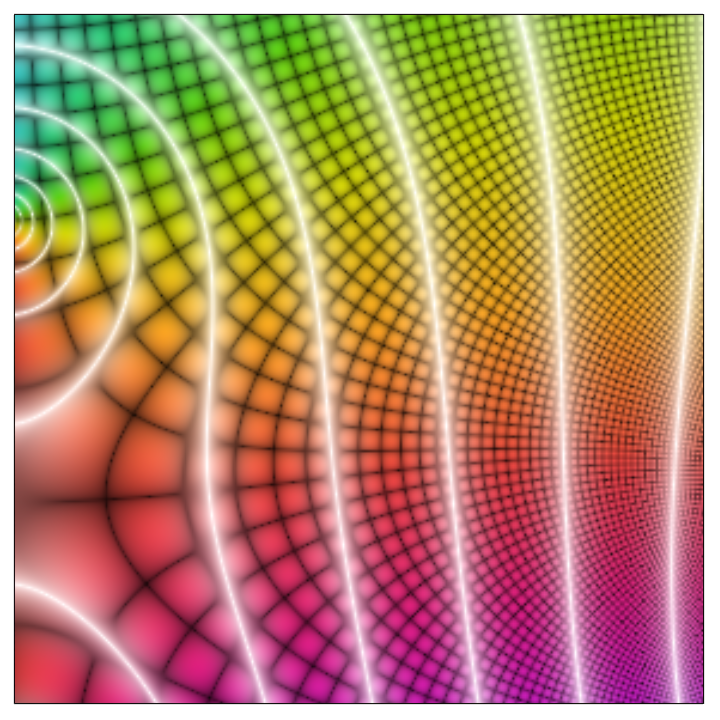}
\includegraphics[width=4.4cm]{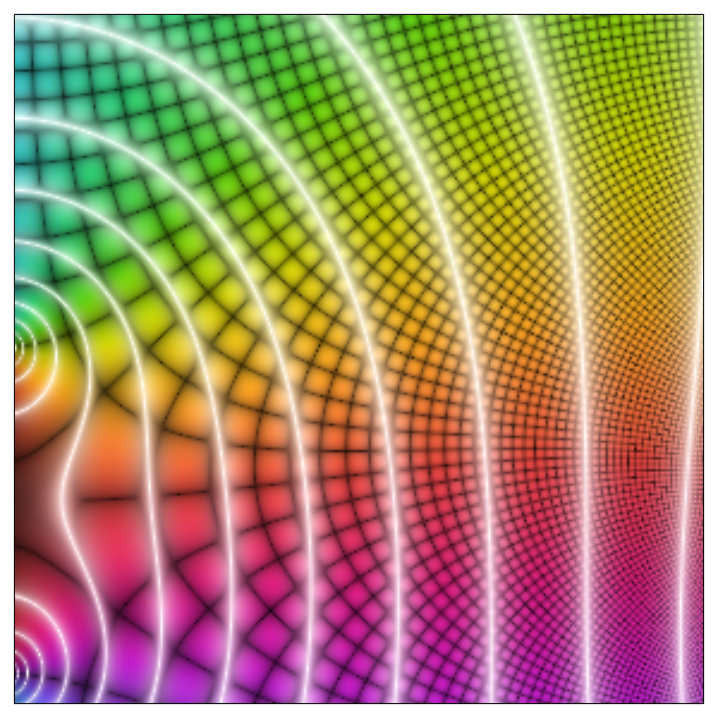}
\includegraphics[width=4.4cm]{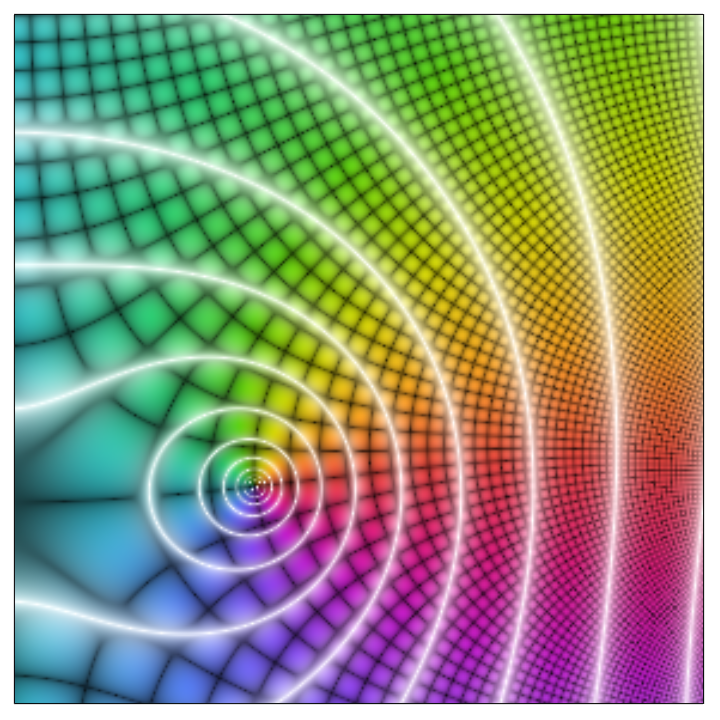}
\caption{Complex colour wheel graphs of the Evans function $E(c)$ for $d=0.6$, $\q = 0.1$, $0.22$, $0.4$, showing the positive 
octant of the complex $c$ plane up to $(1 + i)0.6$. Shown are cases with a pair of imaginary roots, two pairs of imaginary roots, 
and a quadruplet of complex roots, respectively.}
\label{fig:evans}
\end{figure}

Hill's determinant allows us to compute Hill's discriminant, which is the trace of the fundamental 
solution matrix of \cref{eq:hill} after one period. In our setting this trace is given by $2 \cos 2\pi \q$. Unlike the Hill discriminant which foremost is a function of the spectral parameter $\mu$ of Hill's equation, 
we define the Evans function as a function of the spectral parameter $c$ of the linearised Euler's equation.
Using Hill's discriminant we can build an analytic function away from $c \in [-1,1]$ that vanishes when $c$ %
is in the discrete spectrum of the linearised Euler operator. Moreover, the degree of vanishing (in $c$) is the same as the algebraic multiplicity of 0 in the spectrum of \cref{eq:sep}, or equivalently, of $\mu$ in \cref{eq:hillgen} with boundary conditions \eqref{eq:bound}. For this reason, we call it an Evans function for \cref{eq:sep}, and (in a slight abuse of language) for the `class' operator associated with it, which is one block of the block-diagonalised form from \cite{Li00} of the linearised Euler equations. 
\begin{theorem} 
The function
\begin{equation}   \label{eq:evans} 
    E(c; \q, d) = -2 + 2 \cos ( 2 \pi \q) + 4 D(g_0-d^2) \sin^2 \pi \sqrt{g_0-d^2}
\end{equation}
is an Evans function for the separated ODE \eqref{eq:sep} 
where $D(g_0-d^2)$ is the Hill determinant depending on $c$ through the Fourier coefficients $g_k$ of $Q(\eta) = \sin \eta/(c+\sin\eta)$. That is, $E(c;\theta,d)$ is analytic in $c$ outside of $[-1,1]$ and vanishes for values of $c$ where $0$ is in the spectrum of \cref{eq:sep} with $f$ having periodic boundary conditions. The degree of vanishing of $E$ as a function of $c$ is the same as the algebraic multiplicity of $0$ as an eigenvalue of \cref{eq:sep} or, equivalently of $d$ as an eigenvalue of \cref{eq:hillgen} with boundary conditions \eqref{eq:bound}. 
\end{theorem}

\begin{figure}
\includegraphics[width=6cm]{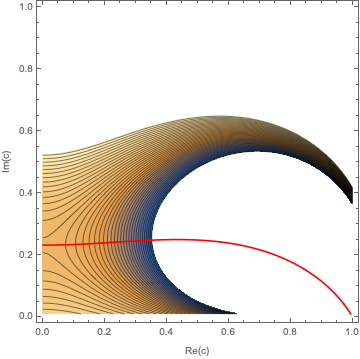}
\includegraphics[width=6cm]{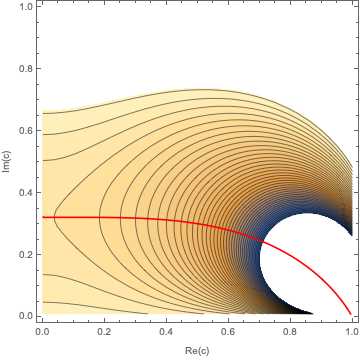}
\caption{Contours of $\Re(\Delta(\mu;c))$ on the complex $c$-plane with zero-locus of $\Im(\Delta(d^2;c))$ in red
for $d= 1/2$ (left) and $d = 1/4$ (right).
Contourlines are corresponding to values of $2\cos(2\pi\q)$ ranging from $-2, \dots, 2$ (from yellow to blue) in steps of $0.1$.
The red curve indicates the location of the spectrum of Euler's equation for $\mu = d^2$ and $\q$ determined by the contourline intersected.} \label{fig:complexc}
\end{figure}

\begin{proof}
Equating equations \eqref{eq:disdef} and \eqref{eq:discrim} we see that by construction
we have that $E(c) = 0$ when $c\not \in [-1,1]$ is in the discrete spectrum of the Euler equation. The rest of the proof is an application of Lemma 8.4.2 from \cite{KP13}. In our notation, Lemma 8.4.2 from \cite{KP13} says that the function 
$$
2 \cos(2\pi \q) - \Delta(\mu) 
$$
is entire in $\mu$ for fixed $\q$ and it is entire in $\q$ for fixed $\mu$. 
Moreover, according to the lemma, the algebraic multiplicity of $\mu$ as an eigenvalue of \cref{eq:hillgen} with boundary conditions \eqref{eq:bound} is equal to the degree of vanishing of $E$ (as a function of $\mu$). 
Differentiating  \cref{eq:discrim} shows that 
\begin{equation}
\frac{\partial \Delta}{\partial c} = - \Delta'(\mu) g'_0(c),
\end{equation}
whence 
$$
\frac{\partial \Delta}{\partial c} = 0 \Leftrightarrow \Delta'(\mu) = 0, 
$$
because the other factor is non-vanishing, 
$$|g_0'(c)| = \frac{1}{(c^2 -1)^{3/2}}.$$ 
Now suppose that $\Delta'(\mu) = 0$. Differentiating again, we have that 
$$ 
\frac{\partial^2 \Delta}{\partial c^2} = - \Delta''(\mu) (g'_0(c))^2, 
$$ 
and again 
$$ \frac{\partial^2 \Delta}{\partial c^2} = 0 \Leftrightarrow \Delta''(\mu) = 0.$$
Repeating the procedure shows that the degree of vanishing of $\Delta$ as a function of $c$ (and hence the degree of vanishing of $E$ on an isolated point of spectrum of the linearised Euler equations) is the same as the degree of $\Delta$ as a function of $\mu$, which by Lemma 8.4.2 from \cite{KP13} is the same as the algebraic multiplicity of \cref{eq:hillgen} with boundary conditions \cref{eq:bound}, or equivalently, the algebraic multiplicity of $0$ as a periodic eigenvalue of \cref{eq:sep}. 
\end{proof}

For the Evans function $\mu = d^2$ and $\q$ are parameters, and we are trying to find a (complex) $c$ 
such that  $\Delta(d^2;c) = 2 \cos 2 \pi \q$.
In \cref{fig:complexc} we show a contourplot of $\Re( \Delta(d^2;c))$ on the complex $c$-plane for fixed $d^2$.
Overlaid are the curves where $\Im( \Delta(\mu;c)) = 0$ in red. Since the dependence of the Evans function on 
$\q$ only appears in the single cosine term, this presentation gives the location of complex roots of 
the Evans function for fixed $d^2$ and varying $\q$.

We note that we recover the circles from \cref{lem:circle} from $E(0; \q, d) = 0$:
When $c=0$ then $s=\pm i$ and hence $g_0 = 1$ and $\tilde g_k =0$ which implies  $D(1-d^2) = 1$, 
such that  %
\[
E(0; \q, d) = -4  \sin^2( \pi \q) + 4 \sin^2( \pi\sqrt{ 1- d^2})
\]
and thus on the circles $( \q + l)^2 + d^2 = 1$ the Evans function vanishes. A contour-plot of $E(0; \q, d)$ is shown 
in \cref{fig:eva} left. At the minimum $(\pm 1, 0)$ the value is $-4$, while at the maximum $(0, \sqrt{3}/2 )$ the value is 4.
The critical point at the origin is degenerate. 
In the right part of \cref{fig:eva} the eigenvalue is fixed to $c = 0.1 i$, and the 0-contour shows for which values 
of $(\q, d)$ this eigenvalue occurs. Since the dependence of the Evans function on $\q$ is simple the zero 
contour can be given explicitly by solving $E = 0$ for $\q$ such that
\[
      \sin^2 \pi\q =  D( g_0 - d^2)  \sin^2 \pi \sqrt{ g_0 - d^2}
\]
where the right hand side is considered as a function of $d$ for constant $c$. 
The maximum of the 0-contour of $E=0$ occurs where $D(g_0 - d) = 0$. 
All 0-contours start at the origin because $D(g_0) = 0$. 

\begin{figure}
\includegraphics[width=5cm]{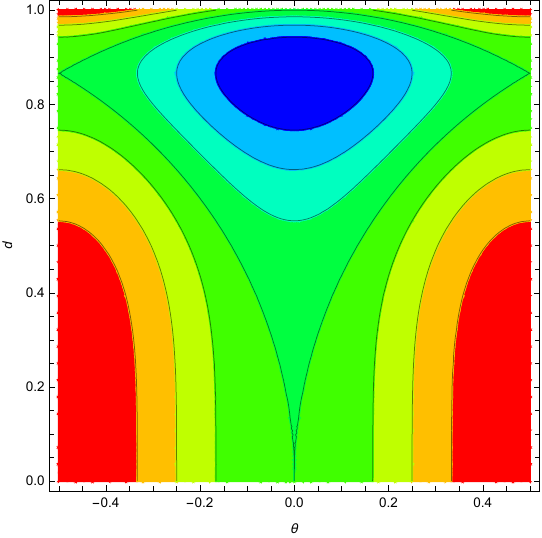}
\includegraphics[width=5cm]{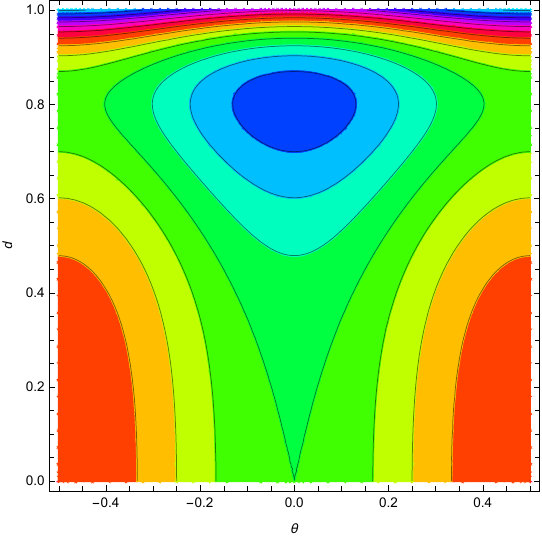}
\caption{Contourplot of $E(c=0; \q, d)$ (left)  and of $E(c=0.1 i; \q, d)$ (right) as
a function of the parameters $\q$ and $d$.
Contour lines have integer values, and in the green shading is the 0-level that shows
where  the corresponding eigenvalue $c$ occurs (left $c=0$, right $c = 0.1 i$).}
\label{fig:eva}
\end{figure}

One property of an Evans function is that it limits to a constant for large arguments:
\begin{lemma} \label{lem:infinity}
The function $E(c; \q, d)$ decays to a non-positive real constant for large $|c|$ 
$$
\lim_{|c| \to \infty} E(c; \q, d) =  2\cos(2\pi \q) - 2\cosh(2\pi d) = - 4 \sin^2( \pi \q) - 4 \sinh^2(\pi d) \,.
$$
\end{lemma}

\begin{proof}
When $|c| \to \infty$ then $s \to 0$ and so $g_k \to 0$ so $D(g_0-d^2) = 1$.
Also $g_0 \to 0$ and  inserting these into \cref{eq:evans} and using $\sin(i x) = i \sinh(x)$ 
gives $-2 + 2\cos(2\pi \q) - 4 \sinh^2(\pi \sqrt{d^2}) $ and the result follows.
\end{proof}

Notice that the limiting values of the Evans function at infinity is non-zero unless $\q = d = 0$.

\begin{lemma} \label{lem:real}
If $c\in \R, c \not \in [-1,0)\cup(0,1]$ and if $c \in i \R$, then $E(c)$ is real for $\q,d \in \R$.
\end{lemma}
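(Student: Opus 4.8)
The plan is to show that $E(c)$ is real in the two asserted regimes by tracing the reality through the three ingredients in the definition \cref{eq:evans}: the constant term $-2 + 2\cos(2\pi\q)$ (manifestly real for $\q \in \R$), the Hill determinant $D(g_0 - d^2)$, and the factor $\sin^2\pi\sqrt{g_0 - d^2}$. The whole expression is a function of $c$ only through the Fourier coefficients $g_k = \kappa i^k s^{|k|}$ and $g_0 = 1 + \kappa$, so everything reduces to the behaviour of $s = s(c)$ and $\kappa = \kappa(s)$. The two cases correspond precisely to the observations already recorded after \cref{eq:discrim}: case 1, $c \in \R$ (with $c \notin [-1,1]$) gives real $s$; case 2, $c \in i\R$ gives purely imaginary $s$.

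First I would handle the case $c \in \R$, $c \notin [-1,0)\cup(0,1]$. Using the inverse Joukowski map and the stated property $s(\bar c) = \overline{s(c)}$, real $c$ forces $s$ real (with $|s| < 1$ off the cut), hence $\kappa = -(1+s^2)/(1-s^2)$ is real. Then every $g_k = \kappa\, i^k s^{|k|}$ satisfies $\overline{g_k} = g_{-k}$, so the matrix in \cref{eq:HillD} is Hermitian (the $(n,m)$ entry maps to the conjugate of the $(m,n)$ entry under $n \leftrightarrow m$, up to the real diagonal shift), whence its determinant $D(g_0 - d^2)$ is real. Since $g_0 = 1 + \kappa$ and $d$ are real, $\sin^2\pi\sqrt{g_0 - d^2}$ is real as well (whether the argument under the root is positive or negative, $\sin^2$ of a real or purely imaginary multiple of $\pi$ is real via $\sin^2(ix) = -\sinh^2 x$). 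Combining the three real factors gives $E(c) \in \R$.

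Next I would treat $c \in i\R$. Here $s$ is purely imaginary, so $s^2 < 0$ is real and $\kappa$ is real; crucially $i^k s^{|k|}$ is a real number for every $k$ because $i^k s^{|k|} = i^k (i|s|\sgn)^{|k|}$ contributes an even total power of $i$. Thus every $g_k$ is real, and this is even stronger than Hermiticity: the Hill matrix is real, so $D(g_0 - d^2)$ is real directly. The remaining factor is real by the same $\sin^2$ argument, so again $E(c) \in \R$. This is exactly the PT-symmetry statement $Q(\eta) = \overline{Q(-\eta)}$ made concrete at the level of Fourier coefficients.

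The main obstacle is not any single computation but justifying that the reality survives the passage to the infinite determinant rather than just the $3\times 3$ truncation. I would lean on \cref{lem:trace}'s setup, namely that $D$ is a genuine Fredholm determinant of Hill type (the off-diagonal $\ell^1$ condition $\sum|g_k| < \infty$ holds for $|s| < 1$ and for $s = \pm i$), so that $D$ is the limit of the finite central truncations; the reality (or Hermiticity) of every truncation is inherited by the limit. The only delicacy is the condition $1 - \mu \neq n^2$ ensuring the denominators $\Lambda - n^2$ in \cref{eq:HillD} are nonzero, but this is a codimension-one exclusion and $\Delta(\mu)$, hence $E(c)$, extends continuously (indeed analytically) across it via \cref{eq:discrim}, so reality on a dense set propagates to the exceptional points by continuity.
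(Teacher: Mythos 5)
Your overall route---tracing reality through the three factors of \cref{eq:evans} via $s(c)$ and the Fourier coefficients $g_k$---is essentially the paper's own proof, and your purely imaginary case is correct: all $g_k$ are real there, so every central truncation of the Hill matrix is real and the Fredholm determinant, as a limit of real numbers, is real (your truncation-limit argument and the continuity extension across $\Lambda = n^2$ are in fact more careful than the paper bothers to be). However, your real-$c$ case rests on a false intermediate claim: the matrix in \cref{eq:HillD} is \emph{not} Hermitian when $s\in\R$. The relation $\overline{g_k} = g_{-k}$ does make the Toeplitz numerators Hermitian, but the denominators $\Lambda - n^2$ depend on only one of the two indices, so conjugate-transposing sends $\tilde g_{n-m}/(\Lambda-n^2)$ to $\tilde g_{n-m}/(\Lambda-m^2)$, which differs off the diagonal; this is not a ``real diagonal shift''. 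You can see it in the displayed $3\times3$ truncation: for real $\kappa, s, \Lambda$ the $(1,2)$ entry is $-i\kappa s/\Lambda$, whereas the conjugate of the $(2,1)$ entry is $-i\kappa s/(\Lambda-1)$. So ``Hermitian, hence real determinant'' does not go through as written.

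The conclusion is nevertheless true, and the repair is short. Write the $N$-th truncation as $F_N = D_N^{-1}\left( D_N + G_N \right)$ with $D_N = \mathrm{diag}(\Lambda - n^2)$ real (here $\Lambda = 1+\kappa - d^2 \in \R$) and $G_N = (\tilde g_{n-m})$ genuinely Hermitian by $\overline{g_k} = g_{-k}$; then $\det F_N$ is a ratio of two real determinants, and reality passes to the limit exactly as in your final paragraph. Alternatively, with $J$ the index flip $n \mapsto -n$ one checks $\overline{F} = JFJ$, so $\det F$ is self-conjugate. The paper itself sidesteps the matrix entirely in this case: for real $c \notin [-1,1]$ the potential $Q(\eta)$ is real, the monodromy of a real ODE is real, hence $\Delta(d^2)$ is real, and since $E(c) = 2\cos(2\pi\q) - \Delta(d^2)$ by \cref{eq:discrim}, reality of $E$ is immediate (and reality of $D$ then follows from \cref{eq:discrim} wherever $\sin^2\pi\sqrt{\Lambda} \neq 0$). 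As a side remark, the paper's wording commits a cousin of your error in the imaginary case---it calls $F$ ``Hermitian'' when it is merely real and non-symmetric---and there your ``real entries suffice'' observation is the correct reading; but for real $c$ you need one of the fixes above, not Hermiticity.
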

\begin{proof}
When $c$ is real and $c \not \in [-1,0)\cup(0,1]$ then $Q(\eta)$ is bounded, real, and smooth on $[0,2\pi]$, and so has a real Fourier series, and hence $D(g_0-d^2)$ must be real. All of the other terms in the expression for $E(c)$ are real if $\q$ and $d$ are. 
When $c$ is purely imaginary then by \cref{eq:s} $s$ is also purely imaginary. This implies 
that $\kappa < 0$ is real.
Because of the particular form of the Fourier coefficients $g_k$ Hill's matrix $F$ is Hermitian so that 
Hill's determinant $D(g_0 - d^2)$ is real. 
The argument of $\sin$ in the last term may be either real of purely imaginary, but in either case
the resulting value of $E(c)$ is real.
\end{proof}

The first part of this lemma reflects the fact that if $c$ is real and $c \not \in [-1,0)\cup(0,1]$ then \cref{eq:hill} 
with quasi-periodic boundary conditions represents a self adjoint operator, and so the spectrum must be real. 
The second part of this lemma is related to the fact that when $c$ is purely imaginary then \cref{eq:hill} is PT-symmetric
as discussed in \Cref{subsec:pureimag}.

The Euler equation is an infinite dimensional Hamiltonian system, and hence the spectrum must satisfy the symmetry  
of a Hamiltonian system. This leads to 
\begin{lemma} \label{lem:Ham}
Let $c$ be a root of the Evans function $E(c)=0$ for fixed real $(\q, d)$.  Then also $E(-c) = 0$, $E(\bar c) = 0$, and $E(-\bar c) = 0$.
\end{lemma}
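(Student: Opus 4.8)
The plan is to reduce all four identities to two symmetries of the Hill discriminant. Combining \cref{eq:disdef} with \cref{eq:discrim} and the definition \cref{eq:evans} gives the clean relation
\[
E(c;\q,d) = 2\cos(2\pi\q) - \Delta(d^2;c).
\]
Since $\q$ and $d$ are real and fixed, $2\cos(2\pi\q)$ is a real constant and $\mu=d^2$ is real, so it suffices to establish (i) $\Delta(d^2;-c)=\Delta(d^2;c)$ and (ii) $\Delta(d^2;\bar c)=\overline{\Delta(d^2;c)}$. From (i) we get $E(-c)=E(c)$, from (ii) we get $E(\bar c)=\overline{E(c)}$, and the fourth identity follows by composition since $-\bar c=\overline{-c}$, giving $E(-\bar c)=E(\bar c)=\overline{E(c)}$. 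Hence a single root $E(c)=0$ forces all of $E(-c)=E(\bar c)=E(-\bar c)=0$.

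For the evenness (i), I would first observe at the level of the potential in \cref{eq:hill} that $\sin(\eta-\pi)=-\sin\eta$ yields $Q_{-c}(\eta)=Q_c(\eta-\pi)$, so replacing $c$ by $-c$ merely translates the $2\pi$-periodic potential by half a period. The Hill discriminant, being the trace of the monodromy matrix, is independent of the base point of the period: writing the original monodromy as $BA$ and that of the half-period-shifted potential as $AB$, where $A=\Phi(\pi,0)$ and $B=\Phi(2\pi,\pi)$ are the two half-period propagators (and $B=\Phi(0,-\pi)$ by periodicity), the identity $\tr(AB)=\tr(BA)$ gives $\Delta(d^2;-c)=\Delta(d^2;c)$. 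For the conjugation symmetry (ii), I would conjugate the first-order form of \cref{eq:hill} directly: for real $\eta$ one has $Q_{\bar c}(\eta)=\overline{Q_c(\eta)}$, and since $\mu=d^2$ is real, taking complex conjugates shows that $\overline{M(\eta;c)}$ solves the system with parameter $\bar c$ and the same $\mu$ with the same identity initial data, so $M(2\pi;\bar c)=\overline{M(2\pi;c)}$ and therefore $\Delta(d^2;\bar c)=\overline{\Delta(d^2;c)}$.

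The main obstacle is not any hard estimate but arguing cleanly that the half-period shift $c\mapsto -c$ leaves the discriminant invariant; the slick route is the base-point/trace-cyclicity argument above, but one must check that the propagators are well defined, which requires $c\notin[-1,1]$ so that $c+\sin\eta\neq0$ and $Q_c$ is smooth — a condition preserved under both $c\mapsto -c$ and $c\mapsto\bar c$, and exactly the domain on which $E$ is defined via $D$ (that is, $|s|<1$).

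As a cross-check, both symmetries can be verified purely at the level of Hill's determinant using the stated relations $s(-c)=-s(c)$ and $s(\bar c)=\overline{s(c)}$. Under $s\mapsto -s$ the Fourier coefficients transform as $g_0\mapsto g_0$ and $g_k\mapsto(-1)^k g_k$, and under $s\mapsto\bar s$ as $g_0\mapsto\overline{g_0}$ and $g_k\mapsto(-1)^k\overline{g_k}$. In either case the diagonal similarity $P=\mathrm{diag}((-1)^n)$ absorbs the factors $(-1)^{n-m}$ appearing in the entries $\tilde g_{n-m}/(\Lambda-n^2)+\delta_{n,m}$ of Hill's matrix in \cref{eq:HillD}, leaving $D(g_0-d^2)$ invariant (respectively conjugated), while the factor $\sin^2\pi\sqrt{g_0-d^2}$ behaves correspondingly because $g_0\mapsto g_0$ (respectively $\overline{g_0}$) and $\cos 2\pi\sqrt{\cdot}$ is entire with real coefficients. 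This confirms (i) and (ii) directly from \cref{eq:evans} and completes the argument.
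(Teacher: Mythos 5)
Your proof is correct, and its outer architecture --- rewriting $E(c;\q,d)=2\cos(2\pi\q)-\Delta(d^2;c)$ via \cref{eq:discrim} and \cref{eq:evans}, reducing the four identities to the two discriminant symmetries $\Delta(d^2;-c)=\Delta(d^2;c)$ and $\Delta(d^2;\bar c)=\overline{\Delta(d^2;c)}$, and composing them for $-\bar c$ --- is the same as the paper's. The conjugation symmetry is handled identically in both (conjugate the equation using $Q(\eta,\bar c)=\overline{Q(\eta,c)}$ and $\mu\in\R$). Where you genuinely diverge is the evenness: the paper uses the reflection symmetry $Q(-\eta,-c)=Q(\eta,c)$, builds the fundamental set $y_1(-\eta,c)$, $-y_2(-\eta,c)$ for the parameter $-c$, and then invokes Floquet's theorem at $\eta=-2\pi$ (in effect $\Phi(-2\pi)=\Phi(2\pi)^{-1}$, whose trace equals that of $\Phi(2\pi)$ since $\det\Phi(2\pi)=1$); you instead use the half-period translation $Q_{-c}(\eta)=Q_c(\eta-\pi)$, which follows from $\sin(\eta-\pi)=-\sin\eta$, together with base-point independence of the discriminant via $\operatorname{tr}(AB)=\operatorname{tr}(BA)$. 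Both mechanisms exploit the oddness of $\psi'=\sin$; yours is arguably cleaner because it avoids tracking the signs of reflected fundamental solutions and the inversion of the monodromy, at the cost of verifying that the half-period propagators are well defined (which you do). Your determinant-level cross-check --- $g_k\mapsto(-1)^k g_k$ under $s\mapsto-s$, respectively $(-1)^k\overline{g_k}$ under conjugation, absorbed by the similarity $\mathrm{diag}((-1)^n)$ acting on the entries of \cref{eq:HillD} --- is precisely the paper's own alternative verification given after its proof, so that portion is identical. One cosmetic remark: your restriction $c\notin[-1,1]$ is marginally stronger than necessary, since the paper's Hill-type convergence holds for all $c\in\C\setminus\bigl([-1,0)\cup(0,1]\bigr)$, including $c=0$ where $Q\equiv1$; but at $c=0$ the lemma is trivial ($c=-c=\bar c$), so nothing is lost.
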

\begin{proof}
{\color{black} Consider the potential $Q$ in  \cref{eq:hill} and note that $Q(\eta, \overline{c} ) = \overline{ Q(\eta, c) }$.
Since $\mu$ is real  \cref{eq:hill}  becomes its conjugate.
Thus, if $y_1, y_2$ is a basis of solutions for the solution space to $y''+Qy = \mu y$, then $\overline{y_1}, \overline{y_2}$ are a basis for $y''+\overline{Q}y = \mu y$. Thus we have that $\Delta(\overline{c}) = \overline{\Delta(c)}$, that is, the Hill discriminant of the original equation \eqref{eq:hill}, with $c \to \overline{c}$ is the complex conjugate of the original Hill discriminant (again provided $\mu \in \R$). This means that $E(\overline{c}) = \overline{E(c)}$ for real $\q$ and $d^2$. In particular, if $E(c) = 0$ then also $E(\overline{c}) = 0$.  
To consider what happens when we change the sign of $c$, note that for the potential $Q$ in \cref{eq:hill} we have that $Q(-\eta,-c) = Q(\eta, c)$. This means that a fundamental set of solutions (normalised to the identity) for $y'' + Q(\eta,-c)y = \mu y$ is $y_1(-\eta,c)$ and $-y_2(-\eta,c)$, where $y_1(\eta,c)$ and $y_2(\eta,c)$ is a fundamental solution set to $\cref{eq:hill}$. In particular, we have that $\Delta(-c) = y_1(2\pi,c) -y_2'(2\pi,c) = y_1(-2\pi,c) + y_2'(-2\pi,c)$. Next we observe that Floquet's theorem says that for any $t \in \R$ $\Phi(\eta + 2 \pi) = \Phi(\eta)\Phi(2 \pi)$, where $\Phi(\eta)$ is a principal fundamental solution matrix to \cref{eq:hill}. Letting $\eta = -2\pi$ gives the result. 
}
\end{proof}

{\color{black} 
Alternatively, note that $s(-c) = -s(c)$ and $s(\bar c) = \overline{s(c) }$. The symmetry under $s \to -s$ can 
be seen directly in the Hill matrix. The factor $\kappa(s)$ is unchanged, hence $g_0$ is unchanged. In the Hill matrix 
every odd off-diagonal entry changes sign under $s \to -s$. This can be compensated by conjugating by 
$\mathrm{diag}( \dots, +1, -1, +1, -1, +1, \dots)$ which leaves Hill's determinant unchanged. 
Conjugating $c$ also conjugates $s$, $\kappa$ and $g_0$.
Further, since $\mu$ is real, in the Hill matrix every odd off-diagonal entry has the opposite sign compared to the complex conjugated matrix entry, so again $\mathrm{diag}( \dots, +1, -1, +1, -1, +1, \dots)$ provides a conjugacy to the original matrix such that
$D(\overline{g_0-d};\overline{s}) = \overline{D(g_0-d;s)}$.
Together with the fact that $\theta \in \R$, this implies $E(\overline{c}) = \overline{E(c)}$ as claimed. }

We want to track the roots of $E(c)$ as we move around in $(\q,d)$ space. 
The circles where $c = 0$ shown in \cref{fig:circles5} divide the parameter plane $(\q, d)$ into 3 types of regions:
\begin{definition}\label{def:regions}
\begin{enumerate}
\item[0:] A point in region 0 is outside all circles
\item[I:] A point in region I is contained inside exactly one circle
\item[II:] A point in region II is contained inside exactly two circles
\end{enumerate}
\end{definition}
In order to show how the spectrum changes when the parameters cross from one region to another 
the derivatives of the Evans function are needed at the boundary circles where $c = 0$.

\begin{lemma}\label{lem:deriv}
The derivative of the Evans function at $c= 0 $ satisfies
\begin{equation*}
\begin{split}
 \pm i \dfrac{\partial E (c = 0) }{\partial c} & = \dfrac{2\pi  \sin (2 \pi \sqrt{1-d^2})}{\sqrt{1-d^2}} 
= \frac{1}{-2d} \dfrac{\partial E (c = 0) }{\partial d}  \, .  %
\end{split}
\end{equation*}
\end{lemma}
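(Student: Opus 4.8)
The plan is to differentiate the closed form \eqref{eq:evans} of the Evans function directly by the chain rule and then to evaluate at $c=0$, where by \cref{lem:trace} almost every contribution drops out. Write $\Lambda := g_0 - d^2 = 1 + \kappa - d^2$, so that $E = -2 + 2\cos(2\pi\q) + 4\,D(\Lambda)\sin^2(\pi\sqrt{\Lambda})$. Both the Hill determinant $D$ and the argument $\Lambda$ depend on $c$ (through $s$, hence $\kappa$ and $g_0$, and through the Fourier coefficients $g_k$ inside the determinant) and on $d$. Using $\frac{\partial}{\partial x}\sin^2(\pi\sqrt\Lambda) = \sin(2\pi\sqrt\Lambda)\,\frac{\pi}{2\sqrt\Lambda}\,\frac{\partial\Lambda}{\partial x}$, I would record
\[
\frac{\partial E}{\partial c} = 4\,\frac{\partial D}{\partial c}\sin^2(\pi\sqrt\Lambda) + 4\,D\,\sin(2\pi\sqrt\Lambda)\,\frac{\pi}{2\sqrt\Lambda}\,\frac{\partial\Lambda}{\partial c},
\]
and the analogous identity with $c$ replaced by $d$, where $\partial D/\partial c$ and $\partial D/\partial d$ denote the total derivatives of the Hill determinant, exactly the quantities treated in \cref{lem:trace}.

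Next I would evaluate at $c=0$. There $s=\pm i$, so $\kappa=0$, $g_0=1$, every off-diagonal entry of Hill's matrix vanishes, $D=D(1-d^2)=1$, and $\Lambda = 1-d^2$. Crucially, \cref{lem:trace} gives $\partial D/\partial c=0$ and $\partial D/\partial\mu=0$ at $c=0$ (for $d\neq 0,1$), and since $\mu=d^2$ the latter yields $\partial D/\partial d = 2d\,\partial D/\partial\mu = 0$ as well. Hence the first term in each derivative drops out, and with $D=1$ only the $\sin^2$-contribution survives:
\[
\frac{\partial E}{\partial c}\Big|_{c=0} = \frac{2\pi\sin(2\pi\sqrt{1-d^2})}{\sqrt{1-d^2}}\,\frac{\partial\Lambda}{\partial c}\Big|_{c=0},\qquad \frac{\partial E}{\partial d}\Big|_{c=0} = \frac{2\pi\sin(2\pi\sqrt{1-d^2})}{\sqrt{1-d^2}}\,\frac{\partial\Lambda}{\partial d}\Big|_{c=0}.
\]

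It then remains to compute the two Jacobian factors. Since $\Lambda = 1+\kappa-d^2$ with $\kappa$ independent of $d$, one has $\partial\Lambda/\partial d = -2d$ at once, which already gives the second equality (divide by $-2d$). For the $c$-derivative I would use the Joukowski relation \eqref{eq:s}: from $c=\tfrac12(s+1/s)$ one gets $dc/ds=\tfrac12(1-s^{-2})$, equal to $1$ at $s=\pm i$; and from $\kappa=-\tfrac{1+s^2}{1-s^2}$ one gets $d\kappa/ds=-4s/(1-s^2)^2$, equal to $\mp i$ at $s=\pm i$. Hence $\partial\Lambda/\partial c|_{c=0}=d\kappa/dc=(d\kappa/ds)(ds/dc)=\mp i$, and multiplying the first identity by $\pm i$ turns $\mp i$ into $+1$, producing the stated real, positive expression and matching $\tfrac{1}{-2d}\,\partial E/\partial d$.

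The only genuine care is in the bookkeeping, and that is where I expect the work to lie rather than in any hard estimate. First, the vanishing of the determinant contributions is precisely \cref{lem:trace}, which is why the generic hypothesis $d\neq 0,1$ enters; the value $d=1$ (where $\Lambda=0$ and the quotient is $0/0$) is removable and is covered by the remark following \cref{lem:trace}. Second, $s(c)$ is two-valued at $c=0$ ($s=+i$ or $s=-i$), and this ambiguity is exactly what produces the sign $\pm i$ in the statement: the two branches give $\partial\Lambda/\partial c|_{c=0}=\mp i$, so that $\pm i\,\partial E/\partial c$ is branch-independent. Beyond verifying these two points the argument is a routine chain-rule computation.
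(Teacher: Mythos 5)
Your proposal is correct and follows essentially the same route as the paper's proof: invoke \cref{lem:trace} to kill the determinant contributions (with $D=1$ at $c=0$), reduce to differentiating $4\sin^2\pi\sqrt{g_0-d^2}$, and obtain the Jacobian factors $\partial\Lambda/\partial d=-2d$ and $dg_0/dc=\mp i$ via the Joukowski map at $s=\pm i$. Your explicit remarks on the branch ambiguity producing the $\pm i$ and on the removable case $d=1$ are consistent with the paper's own discussion surrounding \cref{lem:trace}.
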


\begin{proof} 
Notice that $c = 0$ implies $s = \pm i$. %
In  \cref{lem:trace} we established that $D=1$ at $c =0$ and that any derivative of $D$ at $c = 0$ vanishes.
Thus computing the  $c$- or $d$-derivative of the Evans function boils down to computing 
the derivative of $4 \sin^2 \pi \sqrt{ g_0 - d^2}$. This gives 
\[
     \left. \frac{\partial E}{\partial d}\right|_{c=0} = \frac{ 4 \pi \sin \pi \sqrt{ 1 - d^2}   \cos \pi \sqrt{ 1 - d^2} } { \sqrt{ 1 - d^2}} ( - 2 d)
\]
using $g_0 = 1$ at $c=0$.  
Instead of the $c$-derivative we start by computing the $s$-derivate, using the same observation as before.
The only difference is the final factor which comes from the chain rule, which now is
$$ \dfrac{dg_0}{ds }\bigg{|}_{ \pm i} =  \left. \frac{-4s}{( 1 - s^2)^2} \right|_{\pm i}= \mp i \, .
$$
Using  $\partial c / \partial s|_{\pm i} = 1$  the result follows.
\end{proof}

The two different signs in the $c$-derivative are related to the fact that $c = 0$ is on the branch cut. 
Considering the pre-images under the Joukowski map \cref{eq:s} gives $s = \pm i$.
When approaching $c=0$ from the inside of the unit disk in the $s$-plane we find
\[
    \lim_{\epsilon \to 0^+} c( \pm i (1 - \epsilon) ) = \mp i \epsilon \frac{ 1 - \epsilon/2}{1 - \epsilon}
\]
so that $s = \pm i$ corresponds to approaching the origin $c = 0$ from below (for $s \to +i$) or from above (for $s \to -i$) the branch cut. 

\begin{lemma} \label{lem:normderiv} 
The directional derivative in the normal direction (from inside to out) at the circles $(\q - l)^2 + d^2 = 1$, $l \in \Z$, where $c = 0$  is   
\begin{equation*}
\begin{split}
\nabla_{(\q+l,d)} E(0;\q+l,d)  = -4 \pi \dfrac{\sin(2 \pi \sqrt{1-d^2})}{ \sqrt{1-d^2}} \,.
\end{split}
\end{equation*}
This is positive for $|d| < \sqrt{3}/2$ and negative for $|d| > \sqrt{3}/2$.
\end{lemma}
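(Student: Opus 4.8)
The plan is to compute the outward normal derivative directly from the explicit form of $E$ at $c=0$, using the $\q$-derivative from the closed formula and the $d$-derivative from \cref{lem:deriv}. First I would record that on the circle $(\q+l)^2 + d^2 = 1$ the radius equals one, so the outward unit normal at a point $(\q,d)$ of the circle is simply $(\q+l,d)$; consequently the directional derivative in question is $\nabla_{(\q+l,d)} E = (\q+l)\,\partial_\q E + d\,\partial_d E$, evaluated at $c=0$.

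For the first term I use the closed form $E(0;\q,d) = -4\sin^2(\pi\q) + 4\sin^2(\pi\sqrt{1-d^2})$ recorded above, which gives $\partial_\q E\big|_{c=0} = -4\pi\sin(2\pi\q)$. The key observation is that shifting $\q$ by the integer $l$ leaves $\sin(2\pi\q)$ unchanged, so on the circle $\sin(2\pi\q) = \sin(2\pi(\q+l))$; since $\q+l = \pm\sqrt{1-d^2}$ there, the product $(\q+l)\sin(2\pi\q)$ collapses to $\sqrt{1-d^2}\,\sin(2\pi\sqrt{1-d^2})$ independently of which of the two arcs the point lies on. For the second term I simply quote \cref{lem:deriv}, namely $\partial_d E\big|_{c=0} = -4\pi d\,\sin(2\pi\sqrt{1-d^2})/\sqrt{1-d^2}$.

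Combining the two contributions and factoring out $-4\pi\sin(2\pi\sqrt{1-d^2})$ leaves the bracket $\sqrt{1-d^2} + d^2/\sqrt{1-d^2}$, which via $(1-d^2)+d^2 = 1$ simplifies to $1/\sqrt{1-d^2}$. This gives exactly $\nabla_{(\q+l,d)} E = -4\pi\,\sin(2\pi\sqrt{1-d^2})/\sqrt{1-d^2}$, the claimed expression. For the sign statement I would note that for $d \in [0,1)$ the argument $2\pi\sqrt{1-d^2}$ sweeps the interval $(0,2\pi]$, so the sign of $\sin(2\pi\sqrt{1-d^2})$ changes exactly at $\sqrt{1-d^2} = 1/2$, i.e.\ at $d = \sqrt{3}/2$: it is negative for $d < \sqrt{3}/2$ (making the normal derivative positive) and positive for $d > \sqrt{3}/2$ (making it negative), with a zero at $d=\sqrt{3}/2$.

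The only point requiring genuine care is the sign bookkeeping on the two arcs $\q+l = +\sqrt{1-d^2}$ and $\q+l = -\sqrt{1-d^2}$, and I expect this to be the sole (and rather mild) obstacle: one must check that $(\q+l)\sin(2\pi\q)$ is invariant under $\q+l \mapsto -(\q+l)$, so that both arcs yield the same value and the normal derivative is well defined on the whole circle. Once that evenness is observed, the remainder is the algebraic simplification above together with the sign analysis of a single sine.
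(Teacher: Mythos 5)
Your proposal is correct and follows essentially the same route as the paper's proof: both take the outward unit normal $(\q+l,d)$, obtain $\partial_\q E$ from the closed form $E(0;\q,d)=-4\sin^2(\pi\q)+4\sin^2(\pi\sqrt{1-d^2})$ and $\partial_d E$ from \cref{lem:deriv}, substitute $\q+l=\pm\sqrt{1-d^2}$, and combine over the common denominator $\sqrt{1-d^2}$. Your explicit check that $(\q+l)\sin(2\pi(\q+l))$ is even in $\q+l$ is exactly how the paper's $\pm$ signs cancel in its displayed computation, so the two arguments coincide step for step.
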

\begin{proof}
By Lemma \ref{lem:circle}  the circles  are given by $(\q+l)^2+ d^2 = 1$ and the normalised gradient at the point $(\q + l, d)$ 
is $(\q+l, d)$.
Using \cref{lem:deriv} the directional derivative of $E(c,\q,d)$ along the locus where $c=0$ thus is
\begin{equation*}
\begin{split}
\nabla_{(\q+l,d)} E(0,\q+l,d)  & = -4 \pi \left(( \q+l) \sin( 2 \pi (\q+l)) + \dfrac{ d^2  \sin (2 \pi \sqrt{1-d^2})}{\sqrt{1-d^2}} \right) \\
& = -4 \pi \left( \pm \sqrt{1-d^2} \sin(\pm 2 \pi \sqrt{1-d^2}) + \frac{ d^2 \sin (2 \pi \sqrt{1 - d^2})}{\sqrt{1-d^2}} \right) \\
\end{split}
\end{equation*}
and the result follows when putting this on the common denominator.
The $\sin$ function changes sign when $\sqrt{ 1 - d^2} = 1/2$.
In the limit $d \to 1$ the function approaches $-2 \pi$.
\end{proof}
The last lemma is describing the property of the contour-plot of $E$ as a function of $(\q, d)$ shown in \cref{fig:eva} left.
On the upper arc of the circle $\q^2 + d^2 = 1$  where $d > \sqrt{3}/2$ the Evans function $E(c=0; \q, d)$ is decreasing when crossing the circle towards its centre. 
On each of the lower arcs of the circles $(\q \pm 1)^2 + d^2 = 1$ where $d < \sqrt{3}/2$ the Evans function is increasing when 
crossing towards the respective centre.
Now we show that when these circles are crossed, then a pair of pure imaginary roots appears at the origin.
\begin{proposition} \label{pro:cross}
When crossing the circle $(\q+l)^2 + d^2 = 1$, $l \in \Z$, where $c = 0$ from outside to in (i.e towards the centre point)
and  $d \not = 0, \pm \sqrt{3}/2,1$ a pair of purely imaginary roots of $E(c)$ is born out of the real axis at $c =0$.
\end{proposition}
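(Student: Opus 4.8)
The plan is to restrict the Evans function to the imaginary $c$-axis and study the resulting real, \emph{even} function of a single real variable in a neighbourhood of $c=0$. Write $c = i\beta$ with $\beta\in\R$. By \cref{lem:real} the quantity $E(i\beta;\q,d)$ is real, and the symmetry $E(-c)=E(c)$ (which follows from $s(-c)=-s(c)$ together with the invariance of $g_0$ and of Hill's determinant under $s\to -s$, exactly as recorded in the discussion following \cref{lem:Ham}) shows that $\beta\mapsto E(i\beta;\q,d)$ is even in $\beta$. Hence its graph is symmetric about $\beta=0$, and any purely imaginary zero must occur in a pair $c=\pm i\beta_0$.

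Next I would record the local structure at $\beta=0$. Because $c=0$ sits on the branch cut $[-1,1]$ of $s(c)$, the map $\beta\mapsto E(i\beta;\q,d)$ is only one-sided differentiable there and has a corner. Using that the positive imaginary axis $\beta>0$ corresponds to the branch $s=-i$ (i.e.\ approaching $c=0$ from above the cut, as in the limit computation after \cref{lem:deriv}), the right-hand derivative supplied by \cref{lem:deriv}, propagated through $\partial_\beta = i\,\partial_c$, is
\[
A := \left.\frac{\partial}{\partial\beta}E(i\beta;\q,d)\right|_{\beta=0^+} = -\,\frac{2\pi\sin\!\big(2\pi\sqrt{1-d^2}\big)}{\sqrt{1-d^2}}.
\]
By evenness this gives the expansion $E(i\beta;\q,d) = E(0;\q,d) + A\,|\beta| + o(|\beta|)$ as $\beta\to 0$, with $A\neq 0$ precisely when $\sin(2\pi\sqrt{1-d^2})\neq 0$, that is, away from the excluded values $d=0,\pm\sqrt{3}/2,1$.

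I would then feed in the crossing of the circle. On $(\q+l)^2+d^2=1$ one has $\sqrt{1-d^2}=|\q+l|$, so that $E(0;\q,d)=-4\sin^2(\pi\q)+4\sin^2(\pi\sqrt{1-d^2})=0$ there, consistent with \cref{lem:circle}. Moving a small distance $\epsilon>0$ towards the centre changes $E(0;\cdot)$ by $-\epsilon N + O(\epsilon^2)$, where $N$ is the outward normal derivative evaluated in \cref{lem:normderiv}. The clean algebraic input is the factor-of-two relation $N = -\tfrac{4\pi\sin(2\pi\sqrt{1-d^2})}{\sqrt{1-d^2}} = 2A$, whence $E(0;\q,d)_{\text{inside}} = -2\epsilon A + O(\epsilon^2)$ just inside the circle. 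Since $A\neq 0$, the boundary value $E(i\cdot 0)=-2\epsilon A + O(\epsilon^2)$ and the value $E(i\beta_1;\q,d)=A\beta_1 + E(0;\q,d)+o(\beta_1)$ at a fixed small $\beta_1$ with $2\epsilon \ll \beta_1$ have opposite signs; as $\beta\mapsto E(i\beta)$ is continuous for $\beta\neq 0$, the intermediate value theorem yields a root $\beta_0\in(0,\beta_1)$, and with it the pair $c=\pm i\beta_0$, which collapses onto $c=0$ as $\epsilon\to 0^+$. Repeating the computation on the outside ($\epsilon<0$) gives $E(0;\q,d)_{\text{outside}} = +2|\epsilon|A$, whose sign agrees with that of $A\beta$ for $\beta>0$, so no purely imaginary root is produced outside; together with the quartet structure of \cref{lem:Ham} this shows the pair is genuinely born out of the real axis at the origin precisely as the circle is crossed inward.

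The main obstacle is the sign and branch bookkeeping that produces the relation $N=2A$ rather than $N=-2A$. One must correctly match the side of the branch cut ($\beta>0 \leftrightarrow s=-i$) to the sign appearing in \cref{lem:deriv}, using $\tfrac{dg_0}{ds}\big|_{-i}=+i$ and $\partial c/\partial s|_{-i}=1$, and then carry it through the chain rule $\partial_\beta=i\,\partial_c$. Had the opposite sign occurred, the identical argument would place the pair outside the circle instead; thus this single sign is exactly what encodes the direction of the bifurcation asserted in the statement, and getting it unambiguously right is the delicate part of the proof.
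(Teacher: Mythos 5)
Your proof is correct, and it takes a genuinely different route from the paper's. The paper proves \cref{pro:cross} by applying the implicit function theorem to $F(c,t)=E(c;\gamma(t))$ along a unit-speed normal crossing path $\gamma$, computing $dc/dt = -(\nabla_{\gamma'}E)/(\partial E/\partial c)$ at $c=t=0$ separately on the two branches $s=\pm i$ of the cut via \cref{lem:deriv,lem:normderiv}; the velocity is purely imaginary with opposite sign on each branch, so one root per branch moves off along the imaginary axis. You instead never differentiate $E$ across the nonsmooth point $c=0$: you restrict to the imaginary axis, where $E$ is real (\cref{lem:real}) and even in $c$ (the $s\to-s$ symmetry recorded after \cref{lem:Ham}), and run a one-variable intermediate value argument from the corner expansion $E(i\beta)=E(0)+A|\beta|+o(|\beta|)$ combined with the relation $N=2A$ between the outward normal derivative of \cref{lem:normderiv} and the one-sided $\beta$-derivative. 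Your branch bookkeeping is right: $\beta>0$ corresponds to $s=-i$, matching the paper's limit computation after \cref{lem:deriv}, and your signs $A=-2\pi\sin\bigl(2\pi\sqrt{1-d^2}\bigr)/\sqrt{1-d^2}$, $N=2A$ check out against the paper's formulas. The paper's IFT argument buys local uniqueness and smooth dependence of each root on the crossing parameter; yours is more elementary, certifies directly that the roots lie on the imaginary axis, verifies explicitly that no such root appears when crossing outward, and yields the quantitative location $\beta_0\approx 2\epsilon$. That location is in fact a useful cross-check: redoing the paper's quotient with $\nabla_{\gamma'}E=2S$ and $\partial_c E=\mp iS$ (where $S=2\pi\sin\bigl(2\pi\sqrt{1-d^2}\bigr)/\sqrt{1-d^2}$) gives $dc/dt=\mp 2i$, i.e.\ speed $2$, consistent with your $\beta_0\approx 2\epsilon$; the paper's stated value $\mp i$ drops this inessential factor of $2$. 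Two small wording repairs: your IVT interval includes the endpoint $\beta=0$, so you should note that $\beta\mapsto E(i\beta)$ is continuous there as well (both branches $s=\pm i$ give $\kappa=0$, $g_0=1$, hence the same value $E(0)$), not merely "continuous for $\beta\neq 0$"; and the coefficient $A$ in your expansion is evaluated at the perturbed parameter point rather than on the circle, which is harmless by continuity of $A$ in $(\q,d)$ away from the excluded values $d=0,\pm\sqrt{3}/2,1$, but deserves a sentence.
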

\begin{proof}
Consider the Evans function along a curve $\gamma(t)$ in $(\q, d)$-parameter space 
that crosses the circle $(\q + l)^2 + d^2 = 1$ for $t =0$
in the normal direction with unit speed from the outside to the inside and define $F(c, t) = E(c; \gamma(t))$.
The function $F(c, t)$ defines a curve $c(t)$ through $c = 0$ at $t =0$ if the implicit function theorem holds, 
i.e.\ when the $c$-derivative of $F$  is non-zero at $c = t = 0$. 
According to \cref{lem:deriv} this holds as long as $d \not = 0$ and $d \not = \pm \sqrt{3}/2$.
The derivative of the curve $c(t)$ is given by the implicit function theorem as
\[
   \frac{d c}{ dt} = - \left.\frac{\partial F/\partial t}{\partial F/\partial c}\right|_{0,0} = 
   - \left. \frac{ \nabla_{\gamma'} E }{ \partial E/ \partial c} \right|_{0, \gamma(0)} = \mp i
\]
using \cref{lem:deriv,lem:normderiv}. As before the $\pm$ sign refers to the two sides of the 
branch cut at $c =0$ corresponding to $s = \pm i$. Thus two pure imaginary roots are created.
\end{proof}
The result fails at $d = 0$ and  at $d = \pm \sqrt{3}/2$ which are the points where circles intersect or are tangent.
The propositions can be interpreted as describing a property of the contour-plot of $E$ as a function of $(\q, d)$ shown in \cref{fig:eva} right.
In the right panel $c = \pm 0.1 i$, and the contour-line $E( \pm 0.1 i; \q, d) = 0$ is ``moved'' relative to the nearby circles in
the direction of the inward normal of the circle. 
{\color{black}
The result fails for technical reasons at $d=1$ because then \cref{lem:trace} does not hold.
}

In some sense the last proposition is a bifurcation result where a pair of imaginary roots bifurcates out of the origin. 
Note, however, that unlike in a bifurcation of roots of a polynomial where the derivative of the polynomial 
vanishes, here the derivative is non-zero. 

Having described how a pair of purely  imaginary eigenvalues  $c$ (corresponding to a pair of real $\lambda$, hyperbolic and hence unstable) 
are created the last step is to make sure that they their imaginary part does not vanish. %
Pairs of purely imaginary eigenvalues can collide and form a complex quadruplet.
In principle a complex quadruplet of eigenvalues  may collide on the real $c$-axis and form 
a pair of real eigenvalues (imaginary $\lambda$, elliptic and hence stable). The following lemma shows
that this is impossible.
\begin{lemma} \label{lem:c0}
Let $c$ be real and such that there exists a  quasiperiodic eigenvalue of \cref{eq:hill} with a continuous eigenfunction. Then $c=0$. 
\end{lemma}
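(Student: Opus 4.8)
The plan is to read \cref{eq:hill} as a Rayleigh stability equation and to exploit a coincidence special to this profile. Writing $U(\eta)=\sin\eta$ and $\tilde c=-c$, the substitution $Q=\sin\eta/(c+\sin\eta)$ turns \cref{eq:hill} into $(U-\tilde c)(g''-\mu g)=U''g$ with $\mu=d^2$, i.e.\ Rayleigh's equation for the velocity profile $U$ with phase speed $\tilde c$ and transverse wavenumber $\sqrt{\mu}$. The decisive structural fact is that $U''=-\sin\eta=-U$, so the inflection points of $U$ (where $U''=0$) are exactly its zeros (where $U=0$). A critical layer sits at any $\eta_s$ with $U(\eta_s)=\tilde c$; the strategy is to show, via Rayleigh's criterion, that a continuous neutral eigenfunction forces the critical layer to lie at an inflection point, whence $\tilde c=U(\eta_s)=0$ and therefore $c=0$.

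First I would record the identity underlying \cref{eq:rayleigh}. Multiplying \cref{eq:hill} by $\bar g$, integrating over a period and using the quasi-periodic conditions \cref{eq:bound} to annihilate the boundary term gives $\int_0^{2\pi}Q|g|^2\,d\eta=\mu\int_0^{2\pi}|g|^2\,d\eta+\int_0^{2\pi}|g'|^2\,d\eta$. As $\mu=d^2$ and $\q$ are real the right-hand side is real, so $\Im\int_0^{2\pi}Q|g|^2\,d\eta=0$; writing $Q=1-c/(c+\sin\eta)$ this is precisely $\Im(c)\int_0^{2\pi}\frac{\sin\eta\,|g|^2}{|c+\sin\eta|^2}\,d\eta=0$, which is \cref{eq:rayleigh}.

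The crux is to pass to the real axis. A real eigenvalue $c_0$ with a continuous eigenfunction is reached as a limit $c\to c_0\in\R$ of eigenvalues with $\Im(c)\neq0$ (for example a colliding complex quadruplet), and for each of these $\int_0^{2\pi}\frac{\sin\eta\,|g|^2}{|c+\sin\eta|^2}\,d\eta=0$. If $|c_0|<1$ the denominator degenerates in the limit at the two critical points $\eta_s$ with $\sin\eta_s=-c_0$. Multiplying by $\Im(c)$ and letting $\Im(c)\to0$, the Poisson kernel $\Im(c)/|c+\sin\eta|^2$ concentrates into Dirac masses $\pi\,|\cos\eta_s|^{-1}\delta(\eta-\eta_s)$, so the limit of the (vanishing) identity yields $c_0\sum_s|\cos\eta_s|^{-1}|g(\eta_s)|^2=0$, the sign coming from $\sin\eta_s=-c_0$. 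This is the classical inflection-point condition $\sum_s U''(\eta_s)|g(\eta_s)|^2/|U'(\eta_s)|=0$, and since the limiting eigenfunction is continuous and does not vanish at the critical layer it forces $c_0=0$, because $U''=-U$ vanishes only where $U=0$.

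Two special situations remain, and the second is the main obstacle. When $|c|>1$ there is no critical layer, $Q$ is smooth and real, and I would argue directly that $\mu=0$ is the top of the Hill spectrum: $g=c+\sin\eta$ solves \cref{eq:hill} at $\mu=0$ and, since $|c|>1$, this $2\pi$-periodic solution is nodeless, hence a ground state of $-\partial_\eta^2-Q$ by Sturm oscillation theory; thus the largest $\mu$ in the spectrum is $0$, so $\Delta(\mu)>2\ge 2\cos2\pi\q$ for every $\mu=d^2>0$, \cref{eq:disdef} has no solution, and there is no real eigenvalue with $|c|>1$ (the endpoints $c=\pm1$ then follow by continuity). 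The hard case is $|c_0|\le1$ with $g$ vanishing at every critical point: there the leading critical-layer residue above is empty, so one must instead use the local Frobenius structure at $\eta_s$ (where the regular solution is $\propto g_a$ and vanishes) together with a Picone/Sturm comparison against the explicit zero-$\mu$ solution $c+\sin\eta$ on each arc between successive critical points. The delicate point, and the step I expect to require the most care, is to turn this comparison into the equality $c_0=0$ rather than a mere constraint on the nodal count of $g$.
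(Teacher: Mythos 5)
Your treatment of the range $|c|>1$ is correct and is in fact identical to the paper's argument: $g=c+\sin\eta$ is a nodeless $2\pi$-periodic solution of \cref{eq:hill} at $\mu=0$, so by Sturm oscillation theory $\mu=0$ is the top of the Hill spectrum and no quasiperiodic eigenvalue with $\mu=d^2>0$ can exist there. The problem is everything else. First, you have quietly changed the statement: the lemma asserts that \emph{any} real $c$ admitting a quasiperiodic eigenvalue with continuous eigenfunction must be $0$, whereas your central step assumes $c_0$ is reached as a limit of genuinely complex eigenvalues $c$ with $\Im(c)\neq 0$, with eigenfunctions converging well enough for the Poisson-kernel concentration to pass to the limit and, moreover, not vanishing at the critical layer. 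None of this is available: a real eigenvalue need not lie on a branch of complex ones, eigenfunction convergence is unproved, and your own computation only yields the alternative ``$c_0=0$ or $g(\eta_s)=0$ at every critical point'' --- and you then explicitly defer that degenerate alternative (``the step I expect to require the most care''). That deferred case is exactly the singular range $c\in[-1,0)\cup(0,1]$, so the proposal is incomplete precisely where the lemma has content beyond the self-adjoint case. Second, the endpoints: ``$c=\pm1$ follow by continuity'' is not an argument --- non-vanishing of an analytic-in-$c$ quantity on an open set says nothing at a boundary point, and at $c=\pm1$ the critical point is degenerate ($\cos\eta_s=0$, $Q\sim -2/(\eta-\eta_s)^2$), so your kernel normalisation $|\cos\eta_s|^{-1}$ blows up there as well.

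The paper avoids all of this by letting the hypothesis do the work: for $c\in[-1,0)\cup(0,1]$ the potential $Q(\eta)=\sin\eta/(c+\sin\eta)$ is singular and unbounded, and the paper simply rules out positive eigenvalues with \emph{continuous} eigenfunctions on that interval --- that continuity hypothesis is in the statement exactly so that no critical-layer (Frobenius/Picone) analysis is needed. If you want to rescue your route, you must actually carry out the local analysis at $\eta_s$ you sketch: show that an eigenfunction vanishing at every critical point would have to be a multiple of the regular Frobenius solution on each arc, and run the comparison with $c+\sin\eta$ to a contradiction for $c_0\neq 0$. As it stands, that is the missing proof, not a remark, so the proposal does not establish the lemma.
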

\begin{proof}
If $c\in\R$ and $c\in[-1,0)\cup(0,1]$, the potential $Q(\eta)$ is singular and unbounded, and so there are no positive eigenvalues $\mu$ with continuous eigenfunctions here. 
When $c \in \R, c \not \in [-1,1]$  \cref{eq:hill}
is a  Hill's equation with eigenvalue $\mu = 0$.
We have that $g(\eta) = c+ \sin \eta$ is a non-vanishing periodic eigenfunction on $[0,2\pi]$. Because it has no zeros, it is thus 
the eigenfunction corresponding to the largest periodic eigenvalue, and we can conclude that {\em all} of the Hill's spectrum  lies to the left of $d^2= \mu =0$. 
Thus we can not have any bounded solutions for $c\in \R$ with $c \not \in [-1,1],$ with a positive eigenvalue $\mu$ either. 
Thus the only value of $c\in \R$ for which there exists positive quasiperiodic eigenvalues of \cref{eq:hill} is  $c = 0$.  
\end{proof}

\begin{figure}
\includegraphics[width=4cm]{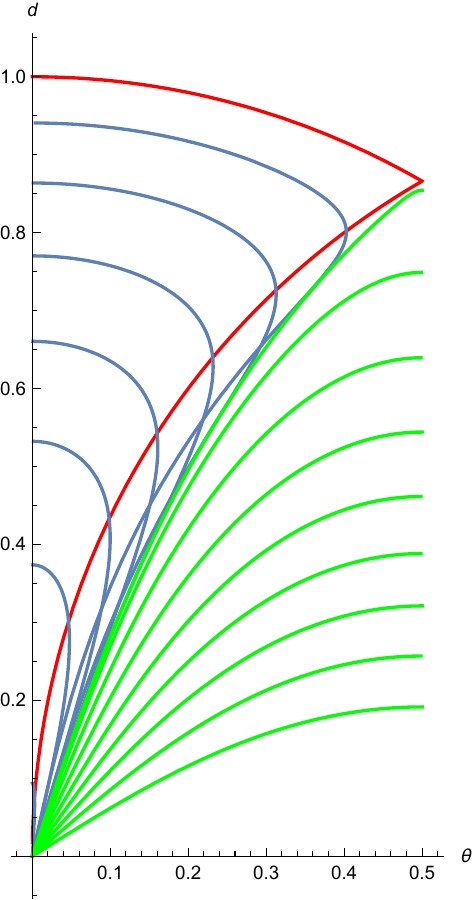}
\caption{Positive quadrant of the scaled unstable disk $\q^2 + d^2 < 1$, $0 \le \q < 1/2$.
Curves of constant  purely imaginary eigenvalues $c/i = 0.1, 0.2, \dots, 0.7$ in blue.
Curves of constant real part of complex eigenvalues $c$ (or $\Re(c)=0.1,0.2,\dots,0.8$) in green, with varying imaginary part of $c$.
Region  I with a purely imaginary pair of eigenvalues is bounded by the red line. 
Below the red line in region II there are either two imaginary pairs of eigenvalues, indicated by intersecting blue contours,
or there is a complex quadruplet as indicated by the green contours.
} \label{fig:diskev}
\end{figure}

We are now ready to state the main result of this section. We have already established that 
there is an eigenvalue $c=0$ on the circles in \cref{lem:circle}.
Whenever two circles intersect the multiplicity doubles. 
In the following theorem we are counting all other isolated eigenvalues. 
\begin{theorem} \label{thm:evanscount}
Consider the isolated eigenvalues $c$ as given by the zeroes of the Evans function $E(c; \q, d)$ other than $c = 0$. %
When $(\q, d)$ is in region 0 there are no eigenvalues.
When $(\q, d)$ is in region I there is a pair of purely imaginary eigenvalues. 
When $(\q, d)$ is in region II there are 4 eigenvalues, either two purely imaginary pairs, a single pair with multiplicity 2, or a complex quadruplet. %
On the boundary between region I and region II there is a pair of purely imaginary eigenvalues.
For all cases half of the eigenvalues are unstable. 
\end{theorem}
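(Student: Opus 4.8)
The plan is to pin down the exact count by sandwiching a lower bound, obtained from the local bifurcation analysis, between the upper bound of Latushkin, Li and Stanislavova, and then to use the discrete symmetries of the Evans function to determine the precise nature of the roots. The key dictionary is that, for a fixed class (fixed $k$, hence fixed $d = k/p^2$), the non-zero integer lattice points $\ba = k\bq + l\bp$ lying inside the unstable disk are exactly those with $(\q - l)^2 + d^2 < 1$, by \cref{eq:dtheta}; thus the number of such lattice points equals the number of unit circles of \cref{lem:circle} containing $(\q,d)$, that is $0$, $1$, or $2$ according to whether $(\q,d)$ lies in region $0$, $\mathrm{I}$, or $\mathrm{II}$ of \cref{def:regions}. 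Since $\lambda = -ick$ gives $\Re(\lambda)=k\,\Im(c)$, the eigenvalues with $\Re(\lambda)\neq 0$ are precisely the roots of $E$ with $\Im(c)\neq 0$; and by \cref{lem:c0} every root other than $c=0$ is non-real, so these are all the roots we count. Hence the bound of \cite{LLS04} reads $0$, $2$, and $4$ in the three regions.

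First I would dispose of region $0$: being outside all circles forces $\q^2 + d^2 > 1$, so \cref{thm:disk} gives no discrete eigenvalue, and $E(c;\q,d)$ has no root (the point $c=0$ being absent off the circles). For region $\mathrm{I}$ I would start on the bounding circle, where $c=0$ is a root, and apply \cref{pro:cross}: crossing inward creates a pair of purely imaginary roots $\pm i\beta$ with $\beta>0$. This produces at least two roots with $\Im(c)\neq 0$, and the \cite{LLS04} bound forces exactly two. That these two are purely imaginary rather than a real pair or part of a quadruplet follows from \cref{lem:c0}, which excludes any non-zero real root, together with the quadruplet symmetry of \cref{lem:Ham}: a genuinely complex root would come with three symmetric partners, giving four roots and violating the bound, while a real pair is forbidden; only a conjugate pair on the imaginary axis survives.

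For region $\mathrm{II}$ I would track the imaginary pair of region $\mathrm{I}$ as $(\q,d)$ approaches and crosses the second circle. The pair persists: it cannot escape to infinity, since $E$ tends to a non-zero negative constant there by \cref{lem:infinity}, and it stays off the real axis by \cref{lem:c0}. So at the crossing, where \cref{pro:cross} gives birth to a second pair at $c=0$, there are four roots with $\Im(c)\neq 0$, matching the \cite{LLS04} bound exactly. By \cref{lem:Ham} and \cref{lem:c0} these four roots are Hamiltonian-symmetric with none real, leaving precisely the three listed possibilities: two distinct imaginary pairs, one imaginary pair of multiplicity two, or a complex quadruplet $\{c_0,-c_0,\bar c_0,-\bar c_0\}$. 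On the boundary arc between regions $\mathrm{I}$ and $\mathrm{II}$ the second pair has just coalesced at $c=0$, so apart from $c=0$ only the surviving imaginary pair of region $\mathrm{I}$ remains. The half-unstable statement is then immediate from \cref{lem:Ham}: the symmetry $c\mapsto -c$ negates $\Re(\lambda)=k\,\Im(c)$, so in each symmetric configuration exactly half of the eigenvalues have $\Re(\lambda)>0$.

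The main obstacle I anticipate is the global tracking of the roots between circles, namely justifying that the count of roots with $\Im(c)\neq 0$ is locally constant on each region and changes only by the births of \cref{pro:cross} at the circles. This would require ruling out creation or annihilation of roots away from $c=0$: escape to infinity is excluded by \cref{lem:infinity}, but interaction with the branch cut $[-1,1]$, where the continuous spectrum sits, must be controlled. The cleanest route is to lean on the \cite{LLS04} upper bound, which caps the count from above, so that the lower bound from \cref{pro:cross} forces equality and the reality lemmas do the rest. The genuinely delicate points are the excluded loci $d\in\{0,\sqrt{3}/2,1\}$, where circles meet or \cref{lem:trace} fails, and the behaviour on the region boundaries, which I would handle by continuity from the interior.
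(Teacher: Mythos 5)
Your proposal reproduces most of the paper's skeleton: region 0 is disposed of via \cref{thm:disk}, the pairs are born at the circles via \cref{pro:cross}, \cref{lem:c0} keeps roots off the real axis, \cref{lem:Ham} forces the three symmetric configurations in region II, and the Hamiltonian symmetry $c \mapsto -c$ gives the half-unstable count. Where you genuinely diverge is the exactness step. The paper does \emph{not} invoke the upper bound of \cite{LLS04} inside this theorem; it gets the exact counts $0$, $2$, $4$ purely from a conservation argument: zeros of the Evans function move continuously in the parameters $(\q,d)$, they cannot escape to infinity (\cref{lem:infinity}), they cannot land on the real axis except at $c=0$ (\cref{lem:c0}), and $c=0$ is in the spectrum only on the circles of \cref{lem:circle} --- so the root count is constant on each region and changes only by the births of \cref{pro:cross}. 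This is exactly the argument you relegate to an ``obstacle'' and then try to bypass with the \cite{LLS04} cap.

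That bypass has a genuine problem. \Cref{thm:evanscount} is stated for $(\q,d)$ as \emph{continuous} parameters (the paper says explicitly that it treats them as such), whereas the bound of \cite{LLS04} applies to the actual linearised Euler operator, i.e.\ only at the discrete, lattice-realisable values $(\q(k),d(k))$; at a generic point of region I or II there is no class and hence no \cite{LLS04} bound to sandwich against. Moreover, the version quoted in the paper is an aggregate bound over all classes, so even at realisable parameters your per-region cap of ``$2$'' or ``$4$'' needs the per-class form of \cite[Theorem 3]{LLS04}, which you would have to extract separately. There is also a structural cost: the whole point of the paper (\cref{thm:sharp}) is to prove the \cite{LLS04} bound sharp by an independent count, so leaning on that bound for the count, while not circular, dilutes the result. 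The fix is simply to promote your own persistence sketch to the main argument --- continuity of roots plus \cref{lem:c0}, \cref{lem:infinity}, and the fact that births occur only at $c=0$ on the circles --- and drop the sandwich; that is the paper's proof. Note that the one genuinely delicate point you flag, roots interacting with the branch cut $[-1,1]$ where $E$ is not defined, is handled no more explicitly by the paper than by you: both arguments rest on \cref{lem:c0} there. Your treatment of the excluded loci $d \in \{0,\sqrt{3}/2,1\}$ (cross away from them, then use continuity; $d=1$ fails only because \cref{lem:trace} does) matches the paper.
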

\begin{proof}
We have been considering the quasiperiodic eigenvalue problem  \cref{eq:hill} with parameter $c \in \C$
and are looking for real positive eigenvalues $\mu = d^2$ with Floquet exponent $2 \pi \q$. 
We have constructed an Evans function $E(c;\q,d)$ viewed as a function of $c$ with  parameters $\q$ and $d$. 
By Li's unstable disk theorem \cite{Li00}, see above \cref{thm:disk}, there are no isolated eigenvalues in region 0.
This implies that the Evans function $E(c)$ has no zeroes when the parameters $(\q, d)$ are in region 0. 
Consider a path of parameters $(\q, d)$ from region 0 to region I that crosses 
the boundary circle away from $d = \sqrt{3}/2$.  According to \cref{pro:cross} this  creates a pair of purely imaginary roots.
Hence there are 2 isolated eigenvalues in region I. Now consider a path of parameters from region I to region II. 
Crossing the boundary circle away from $d =\sqrt{3}/2$ and $d = 0$ another pair of purely imaginary roots is created. 
Hence there are 4 isolated eigenvalues in region II. Should the two pairs of eigenvalues coincide they are counted with 
multiplicity. This does in fact occur, and leads to a complex quadruplet of eigenvalues. Since we are counting 
with multiplicity there are always 4 isolated eigenvalues in region II. 
Because of \cref{lem:c0} it is not possible that the quadruplet of eigenvalues bifurcates onto the real axis
and by Hamiltonian symmetry of the spectrum half of the eigenvalues correspond to unstable eigenvalues.
\end{proof}

To conclude this section we present some numerical results that show how the eigenvalues change in the fundamental 
domain in $(\q, d)$-space.
Throughout we have considered $(\q, d)$ as continuous parameters, such that the eigenvalues $c$ 
become functions of these parameters. 
For the case of purely imaginary $c$ curves of constant $c$ are plotted in the $(\q, d)$-plane, see \cref{fig:diskev}.
The blue curves are given in parametric form by $(\q(d), d)$ where $\q(d)$ is determined by $E(c; \q, d) = 0$, 
or equivalently $\Delta(d^2;c) = 2 \cos (2 \pi \q)$ for constant purely imaginary value of $c$. 
For the green curves consider $c = c_r + i c_i$ for constant real part $c_r$ and the imaginary part 
$c_i$ as a parameter. For such given $c$ find $d$ such that $\Im \Delta(d^2; c_r + i c_i)  = 0$.
Then determine $\q$ from $\Delta(d^2; c) = 2 \cos (2 \pi \q)$ as before and thus 
obtain the green curve  $(\q(d(c_i)), d(c_i))$ in parametric form.
The extremal values of the curve parameter $c_i$ are determined by $\Delta = \pm 2$.
These occur at the origin in $(\q, d)$ and at $\q = 1/2$, respectively. 

\section{Back to Euler's equations} \label{sec:euler}

So far we have constructed an Evans function $E(c)$ for given parameters $(\q,d)$, determined  
by the wave number $k$ and the overall integer parameter vector $\bp$.
In the original Euler equations these are related to an integer mode number vector $\ba = k \bq + l \bp =  d \bp_\perp + \q \bp$. 
When $\ba$ is inside the unstable disk corresponding perturbations have eigenvalues $\lambda=-ikc$ with non-zero real part.
To find the unstable spectrum of the Euler equations linearised about $\cos( p_1 x  + p_2 y)$ the eigenvalues 
corresponding to all parameters $(\q, d)$ 
inside the unstable disk need to be combined into a single Evans function. 
In the previous section it was convenient to define the Evans function $E(c)$ to be a function of $c = i \lambda / k$.
Now putting them together we return to the original temporal spectral parameter $\lambda$.
\begin{theorem} \label{thm:evansp}
An Evans function for the 2-dimensional Euler equations on the torus with coordinates $(x,y)$ 
linearised about the  { \color{black} steady state $\cos( x p_1 + y p_2)$ for 
 integers $p_1$, $p_2$ with $\gcd( p_1, p_2)=1$ }
 is given by
\[
   E_{\bp} (\lambda) = \prod_{k=1}^{p^2-1} E( i \lambda/k;  k \bp \cdot \bq / p^2, k / p^2)^2 
\]
where $p = \sqrt{ p_1^2 + p_2^2}$.
\end{theorem}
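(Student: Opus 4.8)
The plan is to assemble the single-class Evans functions constructed in \cref{sec:evans} into a product indexed by the relevant wave numbers $k$, and to verify that the resulting $E_{\bp}(\lambda)$ vanishes exactly at the discrete spectrum of the linearised Euler operator, counted with correct multiplicity. The key structural fact, already established, is that linearising about $\cos(p_1 x + p_2 y)$ and separating variables reduces the spectral problem to the Hill's equation \cref{eq:hill} on a family of classes, each labelled by an integer $k$ together with the derived parameters $\q = k\,\bp\cdot\bq/p^2 \bmod 1$ and $d = k/p^2$ from \cref{eq:dtheta}. For each such class the function $E(i\lambda/k;\q,d)$ vanishes precisely when $c = i\lambda/k$ lies in the discrete spectrum of that class, so the natural candidate for a global Evans function is a product over all classes that can possibly contribute.

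First I would pin down the index range. By Li's unstable disk theorem (\cref{thm:disk}), only classes with $\q^2 + d^2 \le 1$, equivalently only finitely many $k$, can support discrete spectrum; since $d = k/p^2$, the constraint $d \le 1$ forces $|k| \le p^2$, and one checks that $k = 0$ and $k = p^2$ give boundary or trivial cases, leaving $k \in \{1,\dots,p^2-1\}$ as the classes that can contribute genuine isolated eigenvalues. This explains the product range in the statement. Next I would account for the symmetry $k \mapsto -k$: negating $k$ negates both $d$ and $\q$, and since $E$ depends on these through $d^2$ and $\cos(2\pi\q)$ (see \cref{eq:evans}), the class $-k$ produces the identical factor. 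Rather than doubling the index range one records this by squaring each factor, which is exactly the exponent $2$ appearing in the formula; this squaring also correctly tracks that each spatial mode $\ba$ and its antipode $-\ba$ yield the same temporal eigenvalue $\lambda = -ikc$.

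Then I would verify the defining property of an Evans function: that $\lambda_0$ is a discrete eigenvalue of the linearised Euler operator if and only if $E_{\bp}(\lambda_0)=0$, with the order of vanishing equal to the algebraic multiplicity. The forward direction is immediate from the product structure, since a zero of any factor is a zero of the product. For the converse I would invoke the block-diagonalisation of the operator into classes, so that an eigenvalue of the full operator is an eigenvalue of some class; translating $c = i\lambda/k$ back through each class's Evans function, a zero of $E_{\bp}$ means $E(i\lambda/k;\q,d)=0$ for at least one contributing $k$, which by the single-class result means $c$ is in the discrete spectrum of that class. I would also note analyticity: each factor is analytic in $\lambda$ away from the branch cut corresponding to the continuous spectrum (the interval where $c+\sin\eta$ degenerates), and \cref{lem:infinity} guarantees each factor, hence the finite product, limits to a nonzero constant as $|\lambda|\to\infty$ provided not all classes are trivial.

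The main obstacle I anticipate is the bookkeeping of multiplicity and the treatment of $\lambda$ versus $c$: the argument of the $k$-th factor is $i\lambda/k$, so a single value of $\lambda$ feeds a different spectral parameter $c$ into each factor, and one must confirm that simultaneous zeros across distinct classes correspond to genuinely distinct eigenspaces rather than being overcounted. This is handled by the fact that distinct classes live on disjoint sets of lattice modes and therefore contribute linearly independent eigenfunctions, so multiplicities add across factors exactly as the product of analytic functions dictates. A secondary subtlety is the role of the $c=0$ eigenvalue from \cref{lem:circle}, which sits on the boundary circles and at class intersections; since the theorem's preceding discussion counts the \emph{isolated} eigenvalues other than $c=0$, I would remark that $c=0$ corresponds to $\lambda=0$ in every factor simultaneously and is tied to the continuous spectrum boundary, so it does not spoil the count of isolated unstable eigenvalues furnished by \cref{thm:evanscount} applied class by class.
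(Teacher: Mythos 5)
Your proposal is correct and takes essentially the same route as the paper's own proof: you fix the index range $1 \le k \le p^2-1$ via Li's unstable disk theorem together with the trivial cases $k=0$ and $|k|=p^2$, and you account for the $k \mapsto -k$ symmetry through the evenness of $E$ in $\q$ and $d$ by squaring each factor, exactly as the paper does. Your extra remarks on multiplicity bookkeeping across classes and on the nonzero limit at infinity via \cref{lem:infinity} simply make explicit points the paper treats in passing.
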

\begin{proof}
The wave number $k$ is an integer giving the periodicity of the perturbation in the $\xi$-direction.
It determines the parameters 
\[
         (\q(k), d(k))  = ( k \bq \cdot \bp/p^2 \bmod 1, k/p^2) \,.
\]
 As noted before $\q$ is only defined $\mod 1$. 
We choose a fundamental domain so that $\q \in (-1/2, 1/2]$.
\footnote{Even though the Evans function is even in $\q$, the set of points $(\q(k), d(k))$ does not possess that 
symmetry. Thus for counting lattice points the domain is $\q \in (-1/2, 1/2]$, while
for considering the values of the Evans function $\q \in [0, 1/2]$ is enough.}
By Li's unstable disk theorem \cite{Li00}, see \cref{thm:disk}, there are no unstable eigenvalues 
when $|k| > p^2$. 
When $|k| = p^2$ we are on  the boundary of the unstable disk when $\q(k) = 0$, and outside otherwise. 
When we are on the boundary of the unstable disk according to \cref{lem:circle} we have $c = 0$, and hence no eigenvalue
$\lambda$ with non-zero real part. 
To capture the point spectrum we can thus restrict to $|k| < p^2$.
Note that for large $|k|$ the corresponding point $(\q(k), d(k))$ may be outside (or on the boundary of) the unstable disk.

When $k=0$ \cref{eq:sep} becomes the trivial equation $f'' = 0$ which has no periodic 
solutions other than the constant solution. 
So we can restrict to non-zero waver numbers $k$ with $|k| < p^2$. 
Since $E(c; \q, d)$ in \cref{eq:evans} is  even in $\q$ and in $d$ we can restrict to positive $k$
and instead square the Evans function. This shows that in $E_{\bp}$ every eigenvalue has at least multiplicity 2. 
So we end up with a product of $E(c; \q(k), d(k))^2$ over $k$ from 1 to $p^2 - 1$ where $(\q, d)$ are functions
of $k$. Finally also $c$ is a function of $k$, since the Evans function of the full Euler equations
instead of the separated ODE is a function of $\lambda = -i k c$.

Since the point spectrum has only  finitely many elements there is no convergence problem with the product.
Nevertheless, we note that using \cref{lem:infinity} it is possible to normalise each Evans function such 
that its absolute value approaches 1 at infinity.
\end{proof}

Evans functions are only defined up to a non-zero multiplicative constant, so more, or fewer, terms 
could have been included in the product in \Cref{thm:evansp}. An obvious choice for more terms would have been to include all $k \in \Z$, while an obvious choice for fewer terms would be to remove any $k$ such that $(\q(k),d(k))$ lie outside the unstable disk (see the leftmost panel of \cref{fig:circles}). Each of these choices would have produced an Evans function with the same roots as the one in \Cref{thm:evansp}. However, each of these choices has its own computational disadvantages. For example, choosing all $k \in \Z$ requires one to normalise the Evans function for each class, in both $\lambda $, and $k$, while choosing only the terms inside the unstable disk is cumbersome (and different) for different choices of $\bp$, and so becomes convoluted to compute in practice. We have chosen this particular set of values of $k$ because it  produces an Evans function that is readily computable, while still showcasing the importance of the unstable disk (and the lack of contribution from terms outside it). 

In \cite[Theorem 3]{LLS04} it was proved that the number of non-imaginary isolated eigenvalues 
does not exceed twice the number of integer lattice points inside the unstable disk, not counting 
any lattice points on the line through $\bp$ (in particular not counting the origin).
Numerical experiments in \cite{Li00} and also in \cite{DMW16} indicate that this upper bound is sharp.

For example for $\bp = (4,5)$ there are 128 lattice points in the open unstable disk of radius $\sqrt{ 41}$, not counting the origin.
In this case let $k = 1, \dots, 40$,  and 11 times  $(\q(k), d(k))$  lies in region I,  26 times in region II, once (for $k=9$) 
on the  inner boundary  between region I and II,
once (for $k=39$) on the boundary of the unstable disk, and once (for $k=40$) in region 0 (outside the unstable disk).
On the inner boundary a pair is born at the origin, but at that moment it is not yet an unstable eigenvalue. 
Thus there are $(11+1) \cdot 2 + 26 \cdot 4 = 128$ distinct eigenvalues with non-zero real part. 
The  lattice points for $k = 39$ and $k=40$ on and outside the unstable disk produce an Evans function 
factor that does not have any eigenvalues with non-zero imaginary part, in fact it is non-zero everywhere
in $\C \setminus i [-1, 1]$.
The 40 wave numbers are only half of the wave numbers that may have $(\q, d)$  inside the unstable disk,
the other half are obtained for negative $k$.
They sit in the opposite half of the circle, relative to the line through the lattice point $\bp$.
See \cite{DMW16} for a few more examples of this type.
Counting the number of lattice points inside a circle around the origin is a hard problem. 
But we don't have to actually count this number, we just have to show that the Evans function $E_{\bp}(\lambda)$
has this number of zeroes. More precisely we have to establish that for each factor with 2 or 4 roots
according to \cref{thm:evanscount} there are 2 or 4 lattice points inside the unstable disk. 
Combining all of the results about the Evans function we can now prove our main theorem.
\begin{theorem}\label{thm:sharp}
The number of  eigenvalues $\lambda$ with non-zero real part for the 2-dimensional Euler equations
on the torus linearised about the steady state with stream function $\cos( p_1 x + p_2 y)$ 
{\color{black}  for integers $p_1, p_2$ with $\gcd(p_1, p_2) = 1$}
is equal to twice the number of non-zero integer lattice points 
inside the disk of radius $\sqrt{ p_1^2 + p_2^2 }$.
\end{theorem}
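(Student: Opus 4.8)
The plan is to count the zeroes of the product Evans function $E_{\bp}(\lambda)$ of \cref{thm:evansp} factor by factor, matching each factor against the lattice points of the corresponding class inside the unstable disk. The $k$-th factor carries the parameters $(\q(k), d(k)) = (k\,\bp\cdot\bq/p^2 \bmod 1,\ k/p^2)$ from \cref{eq:dtheta}. By \cref{thm:evanscount} the number of isolated zeroes $c\neq 0$ of $E(\cdot;\q(k),d(k))$ is $0$, $2$, or $4$ according as $(\q(k),d(k))$ lies in region $0$, $\mathrm{I}$, or $\mathrm{II}$ of \cref{def:regions}; writing $r_k\in\{0,1,2\}$ for this region number, the factor has $2r_k$ such zeroes. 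By \cref{lem:c0} no isolated zero other than $c=0$ is real, so every one of these has $\Im(c)\neq 0$; since $\lambda = -ikc$ has $\Re(\lambda) = k\,\Im(c)$, each contributes a $\lambda$ with non-zero real part. Thus class $k$ contributes $2r_k$ unstable eigenvalues before the squaring in \cref{thm:evansp} is taken into account.

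The crux is a purely geometric identity that interprets $r_k$ as a lattice count. In the orthogonal basis $(\bp_\perp, \bp)$ with $|\bp_\perp| = |\bp| = p$ the mode $\ba = k\bq + l\bp$ expands as $\ba = d\,\bp_\perp + (\q_0 + l)\bp$ with $d = k/p^2$ and $\q_0 = k\,\bp\cdot\bq/p^2$, so that $|\ba|^2 = p^2\bigl(d^2 + (\q_0+l)^2\bigr)$. Hence $\ba$ lies strictly inside the disk of radius $p$ exactly when $(\q_0+l)^2 + d^2 < 1$, i.e.\ precisely when the reduced point $(\q(k),d(k))$ lies inside the circle $(\q+j)^2 + d^2 = 1$ of \cref{lem:circle} for the matching integer $j$. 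The number of lattice points of class $k$ inside the disk therefore equals the number of such circles containing $(\q(k),d(k))$, which is by definition $r_k$. Combining this with the previous paragraph, class $k$ contributes exactly twice as many unstable eigenvalues as it has lattice points inside the disk.

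Summing over $k = 1,\dots,p^2-1$, the positive-$k$ factors carry $2N_+$ unstable zeroes in $c$, where $N_+ = \sum_k r_k$ is the number of non-zero lattice points with $k>0$ inside the disk. The central symmetry $\ba\mapsto -\ba$ bijects the $k<0$ lattice points with the $k>0$ ones, so the full non-zero lattice count is $N = 2N_+$; and the squaring of each factor in \cref{thm:evansp}, which encodes the multiplicity-two structure of the spectrum coming from the $\pm k$ pair of modes, doubles each count, so that $E_{\bp}$ has $2\cdot 2N_+ = 2N$ zeroes with non-zero real part, counted with multiplicity. The classes $k=0$ and $|k|\ge p^2$ contribute nothing and need none: for $k=0$ the equation \cref{eq:sep} has only the constant periodic solution, and for $|k|\ge p^2$ \cref{thm:disk} places the parameters in region $0$, consistent with the fact that no mode $k\bq + l\bp$ with $|k|\ge p^2$ lies strictly inside the disk.

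The main obstacle is to secure the robustness of the per-class count underlying \cref{thm:evanscount} when several eigenvalues are present. In region $\mathrm{II}$ the two purely imaginary pairs may collide and leave the imaginary $c$-axis as a complex quadruplet; I must ensure that the count stays at four and that no eigenvalue escapes either to infinity or back onto the real $c$-axis, where it would become stable and spoil the equality. Escape to infinity is excluded by \cref{lem:infinity}, since $E(c;\q,d)$ tends to a non-zero constant there, and collision back onto the real axis is excluded by \cref{lem:c0}, which forbids any real eigenvalue other than $c=0$; together these pin the region-$\mathrm{II}$ count at four with non-zero real part. A second point requiring care is the treatment of wave numbers whose reduced point $(\q(k),d(k))$ falls exactly on a circle: this occurs precisely when the corresponding mode sits on the boundary of the disk, where $c=0$ and $\lambda=0$ by \cref{lem:circle}, so such a mode is correctly excluded from both the eigenvalue count and the (open) disk count. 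Finally, the exceptional values $d\in\{0,\sqrt{3}/2,1\}$ at which the bifurcation argument of \cref{pro:cross} breaks down do not arise for genuine wave numbers, since $d = k/p^2$ with $1\le k\le p^2-1$ can equal neither the irrational $\sqrt{3}/2$ nor the endpoints $0$ and $1$; this removes the only serious degeneracy and lets the generic bifurcation count stand.
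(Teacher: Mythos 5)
Your overall route is the paper's: you count per-class eigenvalues via \cref{thm:evanscount}, identify the region number $r_k$ with the number of lattice points of class $k$ strictly inside the unstable disk (your norm identity $|\ba|^2 = p^2\bigl(d^2+(\q_0+l)^2\bigr)$ is exactly the paper's translation-invariance argument, made quantitative in its Appendix~B), and then double via the $\ba\mapsto-\ba$ symmetry and the squared factors in \cref{thm:evansp}. Your remark that $d=k/p^2$ is rational, hence never $0$, $1$ or $\sqrt{3}/2$, is a nice explicit way to exclude reduced points sitting at circle intersections. However, your handling of the remaining boundary case contains a genuine error.

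You assert that $(\q(k),d(k))$ falling exactly on a circle ``occurs precisely when the corresponding mode sits on the boundary of the disk, where $c=0$,'' and you exclude such classes from both counts. This is true only for the central circle $\q^2+d^2=1$ (the boundary between regions $0$ and I). A reduced point can instead lie strictly inside the central circle but exactly on a neighbouring circle $(\q\pm1)^2+d^2=1$ --- the inner boundary between regions I and II. There the representative mode $\ba$ is \emph{strictly inside} the unstable disk, while it is the translate $\ba\mp\bp$ that sits on the disk's boundary; the class carries one interior lattice point and, by the separate boundary clause of \cref{thm:evanscount}, a pair of purely imaginary $c$'s, i.e.\ two eigenvalues $\lambda$ with non-zero real part. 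This case genuinely occurs: for $\bp=(4,5)$, $\bq=(1,1)$, $k=9$ one gets $(\q,d)=(-1/41,\,9/41)$ with $(\q+1)^2+d^2 = (40^2+9^2)/41^2 = 1$, which is exactly the $k=9$ class in the paper's worked example contributing the ``$+1$'' in $(11+1)\cdot 2 + 26\cdot 4 = 128$. Excluding such a class, as your robustness paragraph prescribes, would make your eigenvalue total fall short of twice the true lattice count (by $4$ per such $\pm k$ pair after squaring), so the claimed equality would fail. Note also that this exclusion contradicts your own earlier step: read literally, \cref{def:regions} places this point ``inside exactly one circle,'' i.e.\ in region I with $r_k=1$, which gives the correct count $2=2\cdot 1$. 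The repair is the paper's treatment: handle the on-neighbour-circle case separately, pairing the two purely imaginary eigenvalues from the boundary clause of \cref{thm:evanscount} with the single strictly interior lattice point, the translate on the boundary contributing, by \cref{lem:circle}, only $c=0$ and hence $\lambda=0$.
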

\begin{proof}
By construction lattice points $(\q(k), d(k))$ in the  
product that defines $E_{\bp}$ are inside the fundamental domain $\q \in (-1/2, 1/2]$.
The corresponding lattice points is $\ba = d \bp_\perp + \q \bp$ which may or may 
not be inside the unstable disk defined by $\ba^2 < \bp^2$.
Now consider the number of lattice points $\ba = k \bq + l \bp$ for given $k$ and arbitrary $l$
that are inside the unstable disk. We are going to show that when 
$(\q(k), d(l))$ is in region I that number is 1, while when 
$(\q(k), d(l))$ is in region II that number is 2.
The main observation is that \cref{fig:circles} and the regions it defines 
are translation invariant by $\pm1$ in the $q$ direction.
The unstable disk is the disk in the centre. Any point in region I in the unstable 
disk shifted by $\pm 1$ is again in region I, but outside the unstable disk.
Thus for every  $(\q(k), d(k))$ in region I that point is the only lattice point in the unstable disk.
For region II the situation is more interesting. 
Consider a lattice point $(\q(k), d(k))$ (corresponding to $\ba$) inside region II in the unstable disk, say with $\q < 0$. 
Shifting this lattice point by 1 to the right (corresponding to  $\ba + \bp$) it is again in region II, and still inside the unstable disk.
Similarly for $\q > 0$ and a shift by 1 to the left (corresponding to $\ba - \bp$). 
Thus for every lattice point $(\q(k), d(k))$ inside region II in the unstable disk there 
are two lattice points inside the unstable disk.

Now according to  \cref{thm:evanscount}  for $(\q,d)$ in region I the Evans function 
has two roots, while for $(\q, d)$ in region II the Evans function has four roots. 
Combining this with the observation about lattice points just made we see
that each lattice point accounts for two roots. 

The exceptional case is when $(\q(k), d(k))$ is on the boundary between region I and region II.
As shown in \cref{thm:evanscount} this factor has two eigenvalues with non-zero real part. 
For this point $(\q \pm 1, d)$ (corresponding to $\ba \pm \bp$) is either outside or on the boundary of the unstable disk, and 
hence again the number of eigenvalues  is twice the number of lattice points.
Any points $(\q(k), d(k))$ in region 0 are outside the unstable disk and the corresponding factor has no eigenvalues.
Any points $(\q(k), d(k))$ in the boundary between region 0 and region 1 are not lattice 
points inside the unstable disk and only have eigenvalue $\lambda = 0$, and hence
have no eigenvalue with non-zero real part. This exhausts all possible cases. 

Finally, for every point $(\q, d)$  in the fundamental domain inside the unstable disk 
there is another point  $(-\q, -d)$ inside the unstable disk.
The Evans function $E(c; \q, d)$ is even in $\q$ and $d$, and this is why only positive $k$
are included, but each factor is squared. 
This doubles the number of lattice points, but also doubles the number of roots. 
Thus altogether the number of eigenvalues as given by roots of $E_{\bp}( \lambda)$ 
is equal to twice the number of non-zero lattice points inside the unstable disk.
\end{proof}

\section{Summary and Discussion}
We have constructed an Evans function for the linearised operator about shear flow solutions to the Euler equations of the form $\psi = \cos(p_1x+p_2y)$ for relatively prime $p_1$ and $p_2$, and have proven the sharpness of the upper bound of the number of points in the point spectrum found in \cite{LLS04}, that is the number of discrete eigenvalues of the operator is exactly twice the number of non-zero integer lattice points inside the so-called unstable disk. To compute the Evans function we separated variables in the linearisation and transformed the dispersion relation of a class into an equation of Hill's type. We were then able to use properties of the Hill determinant to determine that points of spectrum for this class coincided with integer lattice points inside the unstable disk. Finally, we multiplied each of the relevant Evans functions for each class intersecting the unstable disk, resulting in the function given in the Evans function for the full 2+1 dimensional linear operator given in \cref{thm:evansp}. 

We note that in \cref{thm:evansp,thm:sharp}, we explicitly require $\gcd(p_1,p_2) =1$. The reason is that otherwise we cannot conclude that there is a uni-modular transformation to our new coordinates in \cref{sec:separate}. In particular, we are not allowed to treat $\bp = (0,2)$. We expect that the computation can be repeated keeping the gcd as a factor so that the general statment from \cite{LLS04} for non co-prime $\bp$ can be recovered. 

Lastly, we note that while consider the original shear flow and its perturbations as being on the torus, we could have considered them on $\R^2$, and then the perturbations we have considered are those co-periodic in both $x$ and $y$. Relaxing the conditions of co-periodicity in $x$ for example, we would get an additional parameter corresponding to the Floquet exponent of the (now only quasi-periodic) boundary conditions of \cref{eq:sep}, which would then filter through the calculations, effectively shifting the points of spectrum in the complex plane. Relaxing the co-periodicity condition in $y$ would do the same, only now the shifting parameter is the (now continuous) parameter $k$. It remains to be seen which curves and/or surfaces would appear as the points of spectrum are plotted in terms of these additional parameters, as well as their relation to points in and on an ``unstable disk", which to the best of our knowledge, has yet to be defined for such boundary conditions. 

\section{Acknowledgements} 
The authors would like to thank G.~Vasil for very useful discussions.
 RM would like to acknowledge the support of Australian Research Council under grant DP200102130. 
 HRD and RM acknowledge  support through the Australian Research Council  grant DP210101102.

\appendix

\section{Relation to Jacobi Operators}

In  previous papers \cite{Li00,LLS04,DMW16} the analysis of the spectrum of the 2-dimensional Euler 
equations on the torus around a shear flow was done by analysing the operator in its representation 
in Fourier space, which after block-decomposition leads to so called classes which are represented 
by Jacobi operators. Let $a_j$ be the Fourier mode corresponding to the lattice point $\ba + j \bp \in \Z^2$.
Then the Jacobi operator of the class lead by $\ba$ is given by the difference equation
\[
      R(j) ( a_{j+1} - a_{j-1} ) = \lambda a_j , \quad R(j) = \||\bp||^{-2} - ||\ba + j \bp||^{-2}\,,
\]
where $\lambda$ is the spectral parameter.
Since $R(j) = P(j)/Q(j)$ is rational in $j$ the recursion relation can be written in polynomial form as
\[
      P(j) ( a_{j+1} - a_{j-1} ) - \lambda Q(j) a_j = 0 \,.
\]
This can be considered as coming from a Fourier transform, and to identify this we 
relabel the coefficients such that 
\[
      P^+(j+1) a_{j+1} + P^-(j-1) a_{j-1} - \lambda Q(j) a_k = 0 \,.
\]

Let's start at the other end and consider a generalisation of a Hill's equation in the form 
\[
       A u'' + B u' + C u = 0
\]
where $A,B,C$ are periodic functions. In the simple case at hand they are just linear combinations 
of $\sin$ and $\cos$. Now assuming a Fourier series solution $u(\eta) = \sum a_j e^{i j \eta}$
and inserting this into the ODE and collecting coefficients in terms of $e^{i j\eta}$ gives
a recursion relation for the coefficients $a_j$. We obtain
\[
     \sum ( -A j^2  + i B j + C) e^{ij\eta} = 0
\]
Instead of derivatives we have multiplication by powers of $ij$. When multiplying the term $a_j e^{ij\eta}$ 
in the Fourier series by a term $ e^{il\eta}$ of a periodic coefficient the product is $a_j e^{i (j+l) \eta}$. 
To get the recursion relation for the $a_j$ all exponential terms with a shifted exponent have to be relabelled to
$a_{j-l} e^{ij\eta}$.

Since the Fourier transform turns differentiation into multiplication we see that
the number of Fourier modes in the coefficients $A,B,C$ of the ODE determines the order of the DE,
while the degree of the coefficients $P,Q$ in the DE determines the number of derivatives in the ODE.

Note that the ODE can always be transformed into Hill's form (different $Q$!)
\[
   u'' + Q u = 0
\]
where $A Q = C - B^2/(4 A) - B'/2 + B A'/(2 A) $. However, this transformation 
changes the boundary conditions, which is why we need to consider quasi-periodic boundary 
conditions for a Hill's equation in standard form.

\section{Counting Modes}

We define the following map on classes. The class corresponding to the line $y = \frac{p_2}{p_1}x + \frac{k}{p_2} $ is mapped to the point $(\q(k), d(k))$ in the square $\cS = [-\frac12,\frac12] \times [-1,1]$ given by 
$$
\q(k) = k \frac{\bp \cdot \bq}{p^2} \mod{1}   \quad \quad d(k) =  \frac{k}{p^2}. 
$$

The $\q$ coordinate is in the right range by construction, while the $d$ coordinate can be seen to be in the right class by substituting the equation for the class
$$y = \frac{p_2}{p_1}x + \frac{k}{p_2} $$
into the equation for the unstable disk
$$ x^2 +y^2 \leq p^2$$ 
and observing that points inside the unstable disk on the class have $x \in (r_1, r_2)$ the roots of the corresponding quadratic. The discriminant of this quadratic is $\sqrt{1-\frac{k^2}{p^4}} = \sqrt{1-d^2}$ and so any (real) roots  has $d \in [-1,1]$. 

For calculations it is easier to consider instead $\q \in [0,1]$. This is of course the same as $\q \in [-1/2, 1/2]$ because of the lemma where the circles come from. The circles describe three regions in $(\q,d)$ space which are described in \cref{fig:circles}. Now instead we consider two circles in $(\q,d)$ space, one centred at $(0,0)$ and one centred at $(1,0)$. \Cref{lem:circle} means that points on the unstable disk get mapped to points on the circles $\q^2+d^2=1$ and $(\q \pm 1)^2 + d^2 =1$ in $(\q,d)$ space. 

We have the following three lemmata

\begin{lemma}\label{lem:outside} 
Suppose that $\q(k)^2 + d(k)^2 > 1 $ and that  $(\q(k)-1)^2 + d(k)^2 > 1 $. Then there are no integer lattice points on the line $y = \frac{p_2}{p_1}x + \frac{k}{p_1}$ inside the unstable disk. 
\end{lemma}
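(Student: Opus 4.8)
The plan is to show that the hypotheses $\q(k)^2 + d(k)^2 > 1$ and $(\q(k)-1)^2 + d(k)^2 > 1$ force \emph{every} integer lattice point on the given line to lie outside the unstable disk. The natural strategy is to work directly with the parametrisation of lattice points on the class. A lattice point on the line through $\bp$ with offset $k$ is of the form $\ba = k\bq + l\bp$ for integer $l$, and in the orthogonal $(\bp_\perp, \bp)$ coordinate system its coordinates are $(d, \q + l)$ where $d = k/p^2$ and $\q + l = k\,\bp\cdot\bq/p^2$ (before reduction mod $1$), exactly as in \cref{eq:dtheta}. The condition that $\ba$ lies inside the unstable disk, $\|\ba\|^2 < p^2$, is precisely $(\q + l)^2 + d^2 < 1$ after dividing by $p^2$. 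So the whole question reduces to: for which integers $l$ is $(\q(k) + l)^2 + d(k)^2 < 1$?

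First I would reduce to a statement purely about the reduced angle in the fundamental domain. Since $d(k)$ is fixed by $k$, the quantity $(\q + l)^2 + d^2$ as a function of integer $l$ is a shifted parabola, minimised at the integer $l$ making $\q + l$ closest to zero. With $\q(k)$ already reduced to $(-1/2, 1/2]$, the two smallest values of $|\q + l|$ over all integers $l$ are $|\q|$ (at $l = 0$) and $|\q \mp 1|$ (at the adjacent integer), corresponding geometrically to the circle centred at the origin and the neighbouring circle centred at $(\pm 1, 0)$. Every other choice of $l$ gives a strictly larger $|\q + l|$ and hence a strictly larger value of $(\q+l)^2 + d^2$. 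This is the key structural observation: checking the two nearest circles suffices to rule out all lattice points on the class.

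Then I would simply invoke the two hypotheses. The hypothesis $\q(k)^2 + d(k)^2 > 1$ says the $l = 0$ point lies outside the unit circle centred at the origin, i.e.\ $\ba$ is outside the unstable disk. The hypothesis $(\q(k) - 1)^2 + d(k)^2 > 1$ handles the nearest neighbouring circle; by the evenness/symmetry of the configuration in $\q$ (the regions of \cref{fig:circles} are translation invariant by $\pm 1$ in the $\q$-direction, as used later in \cref{thm:sharp}) this together with the first hypothesis also covers the circle centred at $(-1, 0)$, so both adjacent candidates are excluded. Since every remaining integer $l$ produces a value $(\q+l)^2 + d^2$ no smaller than one of these two already-excluded cases, no integer lattice point on the line lies strictly inside the unstable disk, which is the claim.

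The main obstacle I anticipate is bookkeeping around the boundary of the fundamental domain and the sign conventions rather than anything deep. One must be careful that $\q(k)$ is genuinely the reduced representative in $(-1/2, 1/2]$ and that the ``nearest'' neighbouring integer is correctly identified as $+1$ or $-1$ depending on the sign of $\q(k)$; the hypotheses are stated for the circle centred at $(1,0)$, so I would spell out why the circle centred at $(-1,0)$ is automatically handled (either by an explicit symmetry argument or by noting the parabola in $l$ is monotone once one moves away from the minimising integer). The only other point needing care is the strictness of the inequalities, since the unstable disk is the \emph{open} disk $\|\ba\|^2 < p^2$ and the hypotheses are strict; this matches up cleanly, so no boundary point creeps in.
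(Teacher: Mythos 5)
Your proposal is correct and takes essentially the same route as the paper: its proof likewise reduces the lemma to counting the integers $l$ with $(\q+l)^2+d^2<1$, by transforming the intersection points of the class line with the disk into the $(\bq,\bp)$ lattice basis, obtaining the interval endpoints $m_\pm = -k\,\bp\cdot\bq/p^2 \pm \sqrt{1-d^2}$, and checking that the hypotheses leave no integer between them. The only cosmetic difference is that the paper's appendix switches to the fundamental domain $\q \in [0,1]$, so the circles centred at $(0,0)$ and $(1,0)$ are automatically the only candidates and the two hypotheses match them exactly, whereas your $(-1/2,1/2]$ convention requires the additional (correct, and duly supplied) observation that $\q^2+d^2>1$ already excludes the circle centred at $(-1,0)$, since $(\q+1)^2 \ge \q^2$ whenever $\q \ge -1/2$.
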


\begin{lemma}\label{lem:lemma2pts}
Suppose that $\q(k)^2 + d(k)^2 < 1 $ and that  $(\q(k)-1)^2 + d(k)^2 < 1 $. Then there are two integer lattice points on the line $y = \frac{p_2}{p_1}x + \frac{k}{p_1}$ inside the unstable disk. 
\end{lemma}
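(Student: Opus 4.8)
The plan is to prove \Cref{lem:lemma2pts} by the same geometric device used throughout the paper: relating integer lattice points on the class line back to the unit circles in $(\q,d)$-space via \Cref{lem:circle}. First I would recall the parametrisation of the class. A lattice point on the line is of the form $\ba = k\bq + l\bp$ for the fixed $k$ determining the class and varying integer $l$. In the orthogonal basis $(\bp_\perp, \bp)$ this point has coordinates $(d(k), \q(k)+l)$, since $\ba = d\bp_\perp + (\q+l)\bp$ from \cref{eq:dtheta} with the integer ambiguity in $\q$ made explicit. The unstable-disk condition $\|\ba\|^2 < p^2$ translates, after dividing by $p^2$, into $d(k)^2 + (\q(k)+l)^2 < 1$. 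Thus counting lattice points inside the unstable disk on this class is exactly counting integers $l$ for which $(\q(k)+l, d(k))$ lies strictly inside the unit circle centred at the origin in $(\q,d)$-space.

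Next I would translate the two hypotheses. The assumption $\q(k)^2 + d(k)^2 < 1$ says the point with $l=0$ is inside the circle centred at the origin, giving one lattice point. The assumption $(\q(k)-1)^2 + d(k)^2 < 1$ says that the point with $l=-1$, namely $(\q(k)-1, d(k))$, is also inside the unit circle centred at the origin; equivalently, since the configuration is translation-invariant by integer shifts in $\q$, the point $(\q(k), d(k))$ lies inside the circle centred at $(1,0)$. This accounts for a second lattice point. So the two hypotheses directly exhibit two values of $l$ (namely $l=0$ and $l=-1$) giving lattice points inside the disk, and I would identify these as the points in region II of \Cref{def:regions}.

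The substantive part is showing there are \emph{no other} lattice points, i.e.\ that no other integer $l$ yields $(\q(k)+l)^2 + d(k)^2 < 1$. Here I would exploit that $\q(k) \in [0,1]$ (or the fundamental domain $(-1/2,1/2]$), so the only two integer translates of $\q(k)$ that can possibly land in the interval $(-1,1)$ — the horizontal extent of the unit circle — are the two already found; any other shift moves $\q(k)+l$ to absolute value at least $1$, forcing $(\q(k)+l)^2 + d(k)^2 \ge 1$. I would make this precise by bounding: for $|l|\ge 2$ (or more carefully $l\ge 1$ and $l\le -2$ after pinning down the range of $\q$), one has $|\q(k)+l| \ge 1$, so the point is outside the open unit disk regardless of $d$. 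This confines the count to exactly the two lattice points.

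The main obstacle I anticipate is purely bookkeeping with the range of $\q(k)$ and the sign conventions: one must be careful that $\q(k)$ reduced mod $1$ lies in the correct half-open interval so that precisely $l=0$ and $l=-1$ (and no others) are the candidates, and that the strict inequalities in the hypotheses match the open-disk (strict) membership. The geometry is immediate once one recognises region II as the lens where two unit circles overlap, so the essential work is verifying that both hypotheses are simultaneously satisfiable and correspond to that overlap, and confirming that the translation invariance by $\pm 1$ in $\q$ (noted in the proof of \Cref{thm:sharp}) rules out any further lattice points.
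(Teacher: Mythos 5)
Your proof is correct and takes essentially the same approach as the paper: the paper's proof computes the chord endpoints $m_\pm = -\q(k) \pm \sqrt{1 - d(k)^2}$ by intersecting the class line with the disk boundary and applying the inverse unimodular transformation, and counting integers between $m_\pm$ is exactly your condition $(\q(k)+l)^2 + d(k)^2 < 1$ obtained from the orthogonal decomposition $\ba = d\,\bp_\perp + (\q+l)\bp$. Your explicit exclusion step for $|\q(k)+l| \ge 1$ when $l \neq 0,-1$ merely spells out the integer count that the paper's combined proof of the three lemmata leaves implicit, so there is no gap.
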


\begin{lemma}\label{lem:lemma1pt}
Suppose that either  $\q(k)^2 + d(k)^2 > 1 $ and that $(\q(k)-1)^2 + d(k)^2 < 1 $ or  $\q(k)^2 + d(k)^2 < 1 $ and that  $(\q(k)-1)^2 + d(k)^2 >1 $. Then there is a single integer lattice point on the line $y = \frac{p_2}{p_1}x + \frac{k}{p_1}$ inside the unstable disk. 
\end{lemma}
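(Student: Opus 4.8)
The plan is to translate the lattice-point count into the orthogonal $(\q,d)$ coordinates in which the unstable disk becomes the unit disk, and then reduce the whole question to counting integers in a short interval.

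First I would recall from \cref{sec:separate} that the integer Fourier modes on the class indexed by $k$ are exactly $\ba = k\bq + l\bp$ with $l \in \Z$, since $(\bp,\bq)$ is a basis of $\Z^2$. Writing $\ba = d\bp_\perp + \q\bp$ in the orthogonal basis and using $|\bp_\perp| = |\bp| = p$ together with \cref{eq:dtheta}, the membership condition $|\ba|^2 < p^2$ for the open unstable disk becomes $\q^2 + d^2 < 1$, where $d = d(k) = k/p^2$ is fixed along the class and $\q = k\,\bp\cdot\bq/p^2 + l$, i.e.\ the reduced value $\q(k)$ shifted by the integer $l$. This is just the statement that the unimodular change of coordinates carries the unstable disk to the unit disk and the class to the horizontal line $d = d(k)$.

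Next, since $d = d(k)$ is constant along the class, counting lattice points inside the disk is the same as counting integers $l$ with $(\q(k)+l)^2 < 1 - d(k)^2$. Choosing the representative $\q(k) \in [0,1)$, a short case check shows that $|\q(k)+l| \ge 1$ for every $l \notin \{0,-1\}$, whence $(\q(k)+l)^2 + d(k)^2 \ge 1$, so that only $l = 0$ and $l = -1$ can contribute. These two candidates correspond precisely to the two circles in the statement: $l = 0$ gives a point inside the disk iff $\q(k)^2 + d(k)^2 < 1$ (the circle centred at the origin), and $l = -1$ gives a point inside iff $(\q(k)-1)^2 + d(k)^2 < 1$ (the circle centred at $(1,0)$).

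Finally, under the hypothesis of \cref{lem:lemma1pt} exactly one of these two strict inequalities holds, so exactly one of $l = 0, -1$ yields a lattice point inside the unstable disk, giving the single point claimed. The same bookkeeping simultaneously proves \cref{lem:outside} (neither inequality holds, so zero points) and \cref{lem:lemma2pts} (both hold, so two points). The only delicate step is the reduction to $l \in \{0,-1\}$: here one must use the fundamental-domain normalisation $\q(k) \in [0,1)$ together with the strictness of the inequalities to exclude the boundary values $|\q(k)+l| = 1$, and invoke the unimodularity of $(\bp,\bq)$ to guarantee that $l \mapsto k\bq + l\bp$ really enumerates all integer lattice points on the class exactly once.
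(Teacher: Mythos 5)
Your proof is correct and is essentially the paper's own argument in different clothing: both reduce, via the unimodular basis $(\bq,\bp)$ and the orthogonal coordinates $(\q,d)$, to counting the integers $l$ with $(\q(k)+l)^2 + d(k)^2 < 1$, i.e.\ the integers lying strictly between $-\q(k)\pm\sqrt{1-d(k)^2}$. The only cosmetic difference is bookkeeping: the paper computes the interval endpoints $m_\pm$ by intersecting the class line with the unstable disk and locating the roots of the resulting quadratic (proving all three lemmata at once), while you normalise $\q(k)\in[0,1)$ and observe directly that only the shifts $l=0,-1$ can satisfy the strict inequality, each corresponding to one of the two circles in the hypothesis.
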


\begin{proof}

All three lemmas can be proved at once, by computing the intersection of the line of a class with the unstable disk in the coordinates set by $\bp$ and $\bq$. That is we compute 
$$
\begin{pmatrix} m \\ k \end{pmatrix} = \begin{pmatrix} q_2 & -q_1 \\ -p_2 & p_1 \end{pmatrix} \begin{pmatrix} x_\pm \\ y_\pm \end{pmatrix}
$$
where $x_\pm$ and $y_\pm$ are the intersection points of the class $y = \frac{p_2}{p_1}x + \frac{k}{p_1}$ with the unstable disk $x^2+y^2 = p^2$. This gives 
$$
 \begin{pmatrix} m_\pm \\ k \end{pmatrix}= \begin{pmatrix} \frac{-k \bp \cdot \bq}{p^2} \pm \sqrt{1-\frac{k^2}{p^4}} \\ k \end{pmatrix} 
$$
and so now the only thing left is to determine how many integers lie between $m_\pm$ for a fixed $k$, but this is exactly what is described in the statement of the lemmata. That is if $\q(k)^2 +d^2 >1$ and $(\q(k)+1)^2 +d^2 >1$ then there are no integers between $m_+$ and $m_-$ and hence no integer lattice points of the class $y = \frac{p_2}{p_1}x + \frac{k}{p1}$ inside the unstable disk, \& tc. 

Expanding on this a bit, this is the number of integer points between $a \pm \sqrt{1-b}$ (inclusive). This can be thought of as the number of integers $n$ such that $(n-(a+\sqrt{1-b^2}))(n-(a-\sqrt{1-b^2})) = n^2 - 2an +(a^2+b^2-1) \leq 0$ with $a \in [- \frac12,\frac12]$ and $b \in [-1,1]$. This has an upper bound of $3$ with $n = \pm 1, 0$ and $a = b = 0$.  The lower bound is clearly $0$, which is achieved when $b = \pm 1$, but more than that, if $a^2+ b^2 >1 $ and $(a+1)^2+b^2 >1$ then the roots of the quadratic lie in the interval $(0,1)$, and we have exactly the statement of \cref{lem:outside}. The other two statements can be similarly arranged.

\end{proof}

\begin{figure}
\includegraphics[scale=0.22]{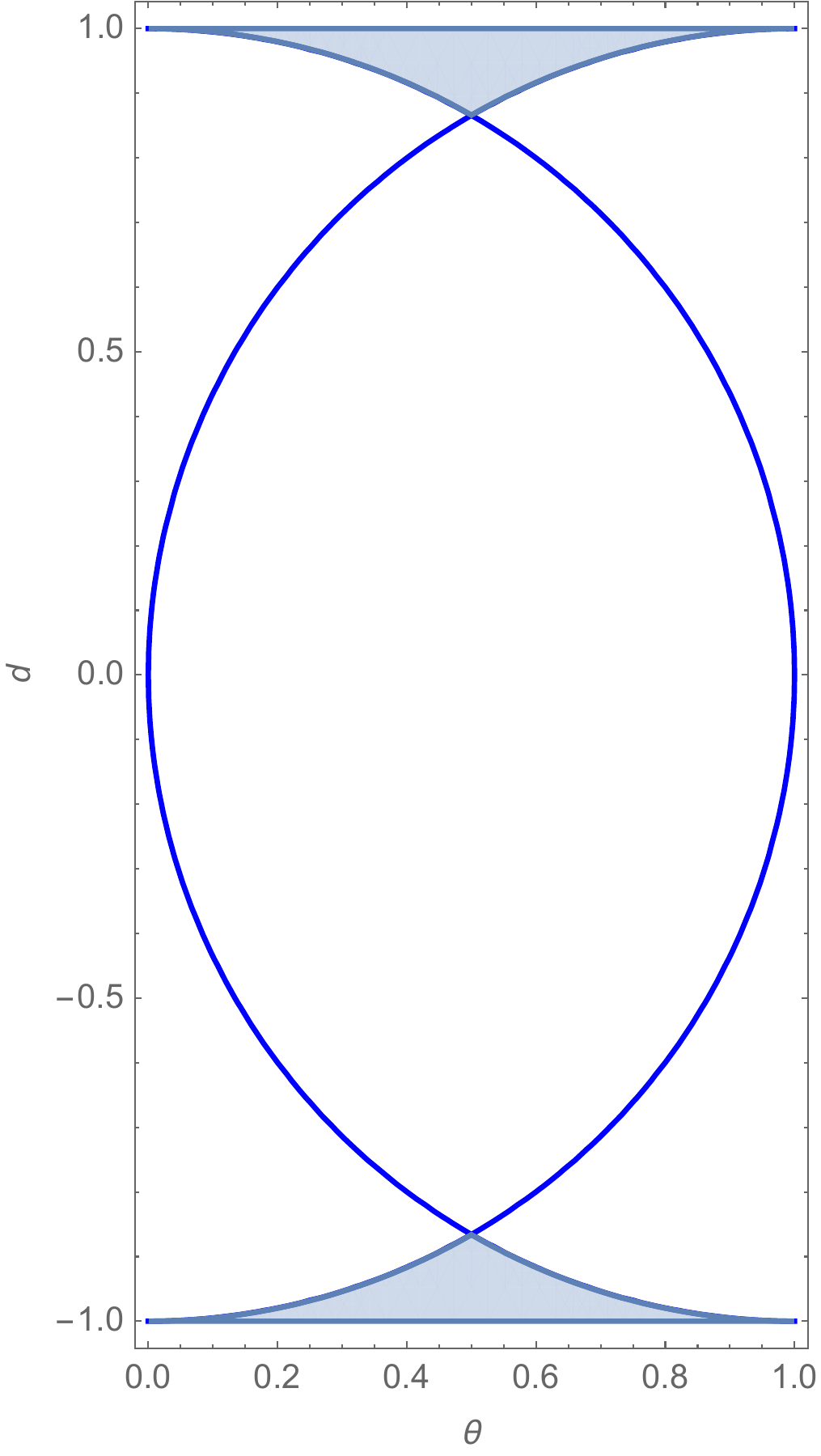}
\includegraphics[scale=0.22]{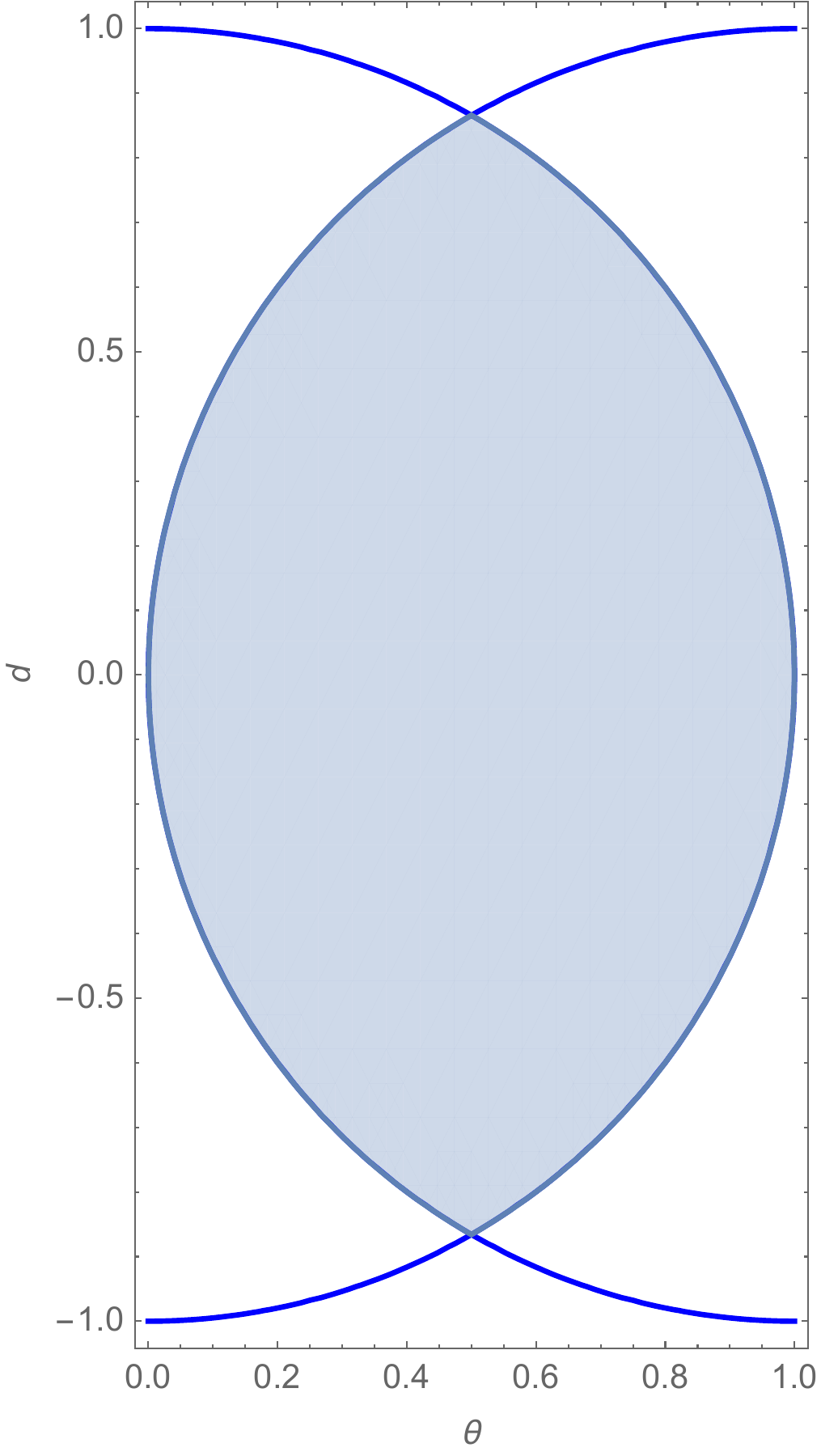}
\includegraphics[scale=0.22]{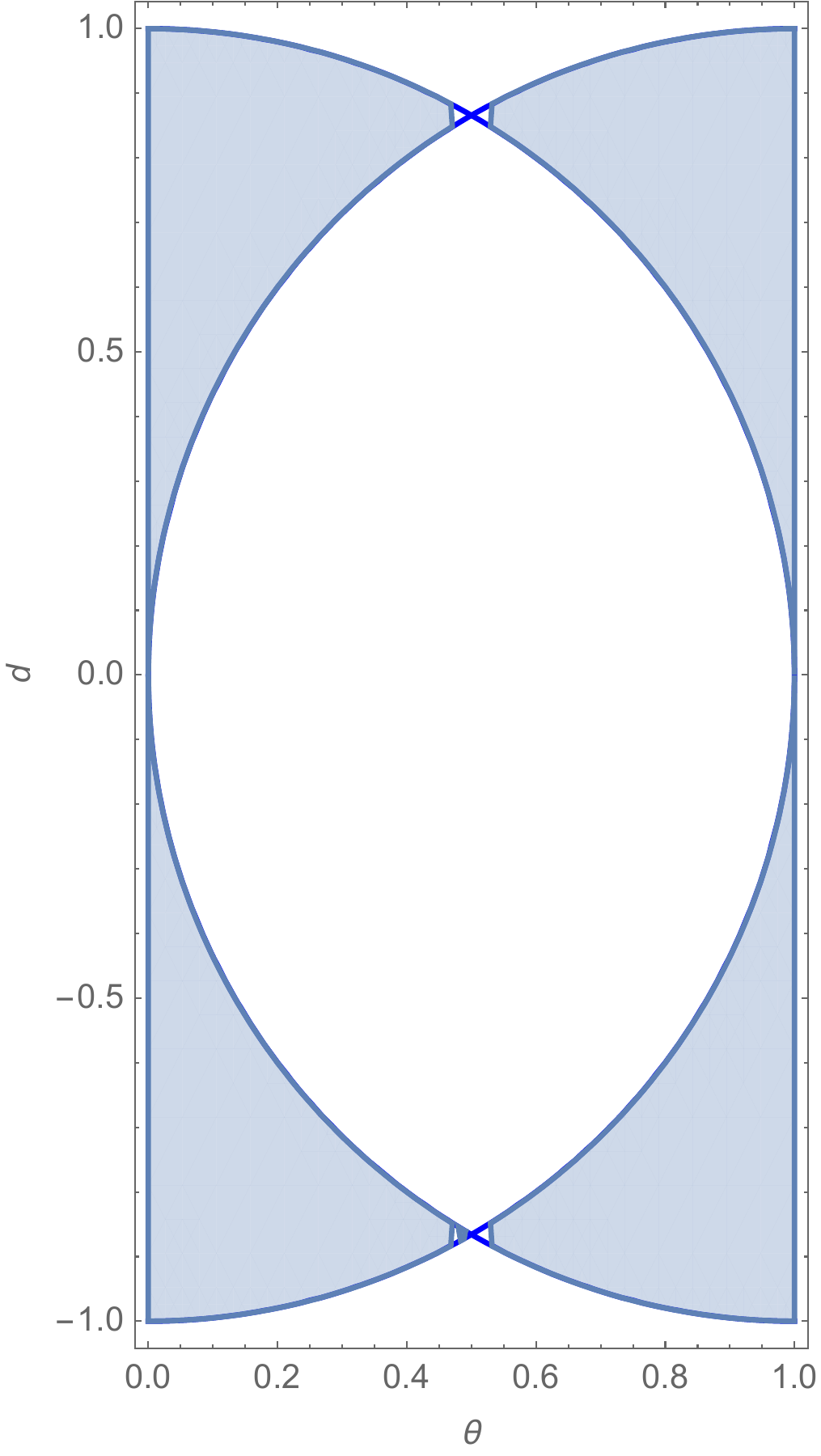}
\caption{The regions in $(\q,d)$ space (from left to right) \cref{lem:outside,lem:lemma2pts,lem:lemma1pt} respectively.} \label{fig:circles}
\end{figure}

\bibliographystyle{alpha}      
\bibliography{all.bib,hd.bib}  

\end{document}